\newtheorem{theorem}{Theorem}[section]
\newtheorem{lem}[theorem]{Lemma}
\newtheorem{prop}[theorem]{Proposition}
\newtheorem{cor}[theorem]{Corollary}
\newtheorem{conj}[theorem]{Conjecture}
\theoremstyle{remark}
\newtheorem{rem}{Remark}
\def\N{\mathbb{N}}
\def\Z{\mathbb{Z}}
\def\R{\mathbb{R}}
\def\m{\!-\!}
\def\mm{\!\!-\!\!}
\def\p{\!+\!}
\newcommand{\cv}[2]{\underset{#1\to#2}{\longrightarrow}}
\def\O{\mathcal{O}}
\newcommand{\Landau}[3]{\underset{#2}{#1}\left(#3\right)}
\newcommand{\card}[1]{\left|#1\right|}
\def\E{\mathbb{E}}
\newcommand{\mean}[1]{\mathbb{E}\left[#1\right]}
\def\P{\mathbf{P}}
\newcommand{\prob}[1]{\mathbf{P}\left(#1\right)}
\def\V{\mathrm{Var}}
\newcommand{\var}[1]{\mathrm{Var}\left[#1\right]}
\newcommand{\cov}[2]{\mathrm{Cov}\left[#1,#2\right]}
\def\Leb{\mathrm{Leb}}
\newcommand{\One}[1]{\mathbf{1}_{#1}}
\newcommand{\Unif}[1]{{\rm Unif}\left(#1\right)}
\newcommand{\Exp}[1]{{\rm Exp}\left(#1\right)}
\newcommand{\Gam}[2]{\mathrm{Gamma}\left(#1 , #2\right)}
\newcommand{\Binomial}[2]{{\rm Binom}\left(#1 , #2\right)}
\newcommand{\Normal}[2]{{\cal N}\left(#1 , #2\right)}
\def\LIS{\mathrm{LIS}}
\def\LDS{\mathrm{LDS}}
\def\S{\mathfrak{S}}
\def\fs{\mathfrak{s}}
\def\pat{\mathrm{pat}}
\newcommand{\hrec}[1]{\mathrm{rec}^\uparrow\left(#1\right)}
\newcommand{\lrec}[1]{\mathrm{rec}^\downarrow\left(#1\right)}
\def\LSKV{\mathrm{LSKV}}
\def\carre{[0,1]^2}
\def\opencarre{(0,1)^2}
\def\pts{\mathcal{P}}
\newcommand{\perm}[1]{{\rm Perm}\left(#1\right)}
\newcommand{\st}[1]{\mathrm{st}\langle#1\rangle}
\def\Dx{\Delta x}
\def\Dy{\Delta y}
\def\Usjo{\mathcal{U}}
\def\Psjo{\mathfrak{P}}
\def\L{\mathcal{L}}
\def\Vert{\mathcal{V}}
\def\Edges{\mathcal{E}}
\def\s{\mathfrak{s}}
\title{A geometric approach to\linebreak conjugation-invariant random permutations}
\author{Victor Dubach}
\date{}
\begin{document}

\maketitle

\begin{abstract}
    We propose a new approach to conjugation-invariant random permutations.
    Namely, we explain how to construct uniform permutations in given conjugacy classes from certain point processes in the plane.
    This enables the use of geometric tools to study various statistics of such permutations.
    For their longest decreasing subsequences, we prove universality of the $2\sqrt n$ asymptotic.
    For Robinson--Schensted shapes, we prove universality of the Vershik--Kerov--Logan--Shepp limit shape, thus solving a conjecture of Kammoun.
    For the number of records, we establish a phase transition phenomenon as the number of fixed points grows.
    For pattern counts, we obtain an asymptotic normality result, partially answering a conjecture of Hamaker and Rhoades.
\end{abstract}

\noindent{\bf Acknowledgments:} The author is very grateful to Valentin F\'eray for insightful discussions and valuable suggestions, and to Mohamed Slim Kammoun for helpful comments.

\section{Introduction}

\subsection{Conjugation-invariant random permutations}

We say that a random permutation $\tau$ of $[n] := \{1,\dots,n\}$ is {\em conjugation-invariant} when for any fixed permutation $\rho$, the conjugate $\rho\circ\tau\circ\rho^{-1}$ follows the same law as $\tau$.
Standard examples of conjugation-invariant permutations are uniformly random permutations, random involutions, Ewens random permutations \cite{E72}, their generalizations \cite{BUV11,K78,T00}, and many more.

An equivalent description of conjugation-invariance makes use of the cycle decomposition.
Define the {\em cycle type} of $\tau$ as the sequence $t=(t_1,t_2,\dots,t_n)$ where $t_p$ is the number of $p$-cycles in $\tau$.
We also say that $t$ has size $\sum_{p=1}^n pt_p = n$ and that $\tau$ is $t$-cyclic.
Since the cycle type is a total invariant for conjugation in the symmetric group, a random permutation $\tau$ is conjugation-invariant if and only if, when conditioned on its cycle type $t$, $\tau$ is a uniform $t$-cyclic permutation.
Thus, studying conjugation-invariant random permutations often amounts to studying uniform permutations with given cycle types.

There are reasons to believe in universality phenomena for several statistics of conjugation-invariant permutations, with a dependence on the number of fixed points and sometimes $2$-cycles.
Recent advances motivate this idea by generalizing asymptotic properties of uniformly random permutations to conjugation-invariant random permutations under certain hypotheses \cite{F13,FKL22,HR22,K18}.
The aim of this paper is to make further progress in this direction by introducing a ``geometric'' construction for random permutations in given conjugacy classes, see \Cref{lem: geometric construction t-cyclic} in \Cref{sec: geometric construction}.
This new point of view allows us to establish asymptotic results on the longest monotone subsequences, Robinson--Schensted shape, number of records, and pattern counts.

\begin{description}
    \item[Longest monotone subsequences.] The maximum length of a monotone subsequence in a uniform permutation famously behaves as $2\sqrt n$ \cite{VK77}.
    Baik and Rains \cite{BR01} showed similar asymptotics for random involutions, with a dependence on the number of fixed points, and Kammoun \cite{K18} proved that the $2\sqrt n$ asymptotic holds for conjugation-invariant random permutations with few cycles.
    In \Cref{sec: results LIS LDS} we extend these results to general conjugation-invariant permutations for their longest {\em decreasing} subsequences.
    Longest increasing subsequences are harder to handle, and we only provide such asymptotics up to a multiplicative constant.
    \item[Robinson--Schensted shape.] The Robinson--Schensted shape of a uniform permutation converges, after suitable rescaling, to a limit curve \cite{LS77,VK77}.
    Kammoun \cite{K18} extended this result to conjugation-invariant permutations with few cycles, and conjectured that this assumption could be lifted.
    In \Cref{sec: results RS} we provide a positive answer to this conjecture.
    \item[Records.] The numbers of records in uniform permutations are known to satisfy asymptotic normality.
    In \Cref{sec: results records} we prove a general limit theorem for the number of high records (or left-to-right maxima) in conjugation-invariant permutations, showcasing a phase transition as the number of fixed points grows.
    More precisely, the number of high records interpolates from logarithmic with Gaussian fluctuations to asymptotically Gamma, when the number of non-fixed points is of order $n/\sqrt{\log n}$.
    We also find the first order asymptotics of low records (or left-to-right minima), and their second order asymptotics in some cases.
    \item[Pattern counts.] Janson~et~al.~\cite{JNZ15} established asymptotic normality for all pattern counts in uniform permutations.
    Kammoun \cite{K22} and Hamaker and Rhoades \cite{HR22} extended this to conjugation-invariant permutations, under certain assumptions on their cycles.
    The authors of \cite{HR22} conjectured that the (non-degenerate) asymptotic normality of pattern counts holds for all conjugation-invariant permutations, where the asymptotic variance depends on the numbers of fixed points and $2$-cycles.
    In \Cref{sec: results pattern} we partially answer this conjecture, by proving asymptotic normality for all conjugation-invariant permutations, and non-degeneracy in some cases.
\end{description}

Our results can be applied to most models of conjugation-invariant permutations, with virtually no restrictions on their cycle types.
For example, they are novel for random permutations with cycle weights, some of which have a lot of cycles \cite{BUV11,EU14}, and for central virtual permutations, some of which have a macroscopic number of fixed points \cite{T00}.

\subsection{A geometric construction}
\label{sec: geometric construction}

We begin by recalling the notion of standardization:
if $(y_1, \dots, y_n)$ is a sequence of pairwise distinct numbers, define $\tau = \st{y_1,\dots,y_n}$ as the unique permutation of $[n]$ such that:
\[
    \text{for all }1\le i,j\le n,\quad \tau(i)<\tau(j) \Leftrightarrow y_i<y_j .
\]
The following two results are classical and easy to check:

\begin{lem}\label{lem: standardization}
    Let $\mu$ be an atomless probability distribution on $\R$, and let $Y_1, \dots, Y_n$ be i.i.d.~random variables distributed under $\mu$.
    Then $Y_1, \dots, Y_n$ are a.s.~pairwise distinct, and the permutation $\st{Y_1,\dots,Y_n}$ is uniformly random in $\S_n$.
\end{lem}

\begin{lem}\label{lem: randomly conjugating}
    Let $t$ be a cycle type of size $n$, and let $\fs$ be an arbitrary fixed $t$-cyclic permutation.
    If $\sigma$ is a uniformly random permutation of $\S_n$ then $\sigma\circ \fs\circ \sigma^{-1}$ is a uniformly random $t$-cyclic permutation.
\end{lem}

Now, consider a finite subset $\pts=\{Z_1=(X_1,Y_1),\dots,Z_n=(X_n,Y_n)\}$ of $\R^2$ with no redundant x- or y-coordinates.
We can define a permutation $\tau$ of $[n]$ by letting $\tau(i)=j$ when the $i$-th point from the left in $\pts$ is the $j$-th point from the bottom;
equivalently, $\tau = \st{Y_{(1)}, \dots, Y_{(n)}}$ where $Z_{(1)}, \dots, Z_{(n)}$ are ordered such that $X_{(1)}< \dots< X_{(n)}$.
We denote by $\tau=\perm{\pts}$ this permutation.

By considering adequate planar point processes $\pts$, we may obtain random permutations $\tau$ with various laws: 
for instance if $Z_1,\dots,Z_n$ are i.i.d.~$\Unif{\carre}$ variables then $\tau$ is uniformly random, and if we symmetrize this family with respect to the diagonal of $\carre$ then $\tau$ is a uniform involution of size $2n$ with no fixed point.
The latter construction was notably used by Baik and Rains in \cite{BR01}.
An asset of this point of view is that some properties of $\tau$ may be easier to derive directly from $\pts$: see e.g.~\cite{AD95} for a study of increasing subsequences in uniform permutations, and \cite{K06} for the case of involutions.
\Cref{lem: geometric construction t-cyclic} generalizes the geometric construction of Baik and Rains for random involutions \cite{BR01} to uniform permutations in any conjugacy class.
Although simple, this lemma is fundamental to our analysis:
general conjugation--invariant permutations do not benefit from the same valuable algebraic properties as uniform permutations or involutions, and our geometric construction bypasses this problem.

\begin{lem}\label{lem: geometric construction t-cyclic}
    Let $t$ be a cycle type of size $n$, and let $\fs\in\S_n$ be an arbitrary $t$-cyclic permutation.
    Let $\left(U_i\right)_{i\in[n]}$ be a sequence of i.i.d.~$\Unif{[0,1]}$ random variables.
    Set $Z_i := \left(U_i,U_{\fs(i)}\right)$ for each $i\in[n]$, and $\pts := \left\{ Z_i, i\in[n] \right\}$.
    Then $\tau:=\perm{\pts}$ is a uniform $t$-cyclic permutation.
\end{lem}

\begin{figure}
    \centering
    \includegraphics[scale=1.4]{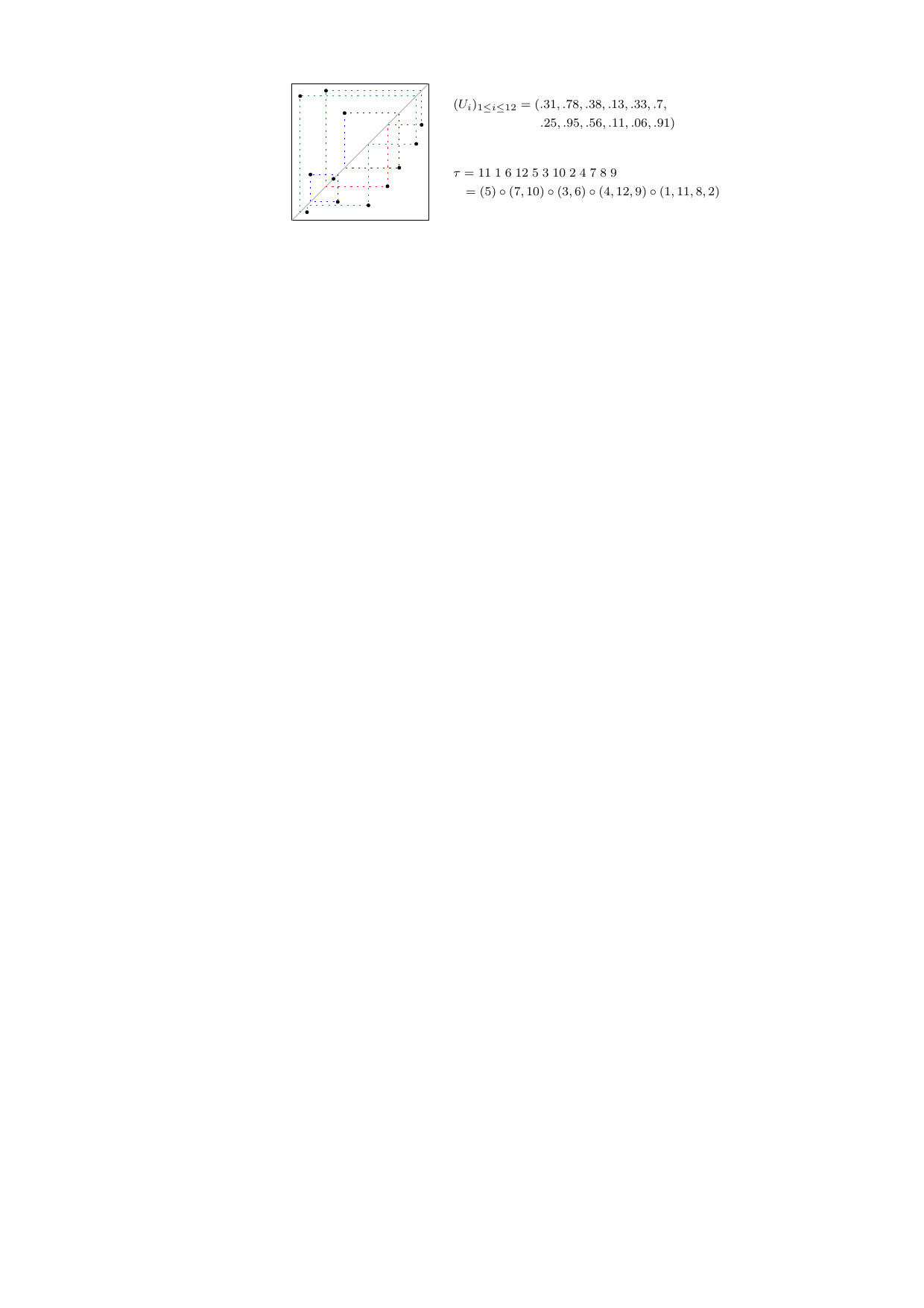}
    \caption{The geometric construction of a random $t$-cyclic permutation for $t=(1,2,1,1)$.
    Here we used the arbitrary $t$-cyclic permutation $\fs := (1)\circ (2,3)\circ (4,5)\circ (6,7,8)\circ (9,10,11,12)$.
    The resulting permutation $\tau$ is written in one-line notation and in cycle product notation, and its cycles are highlighted on the point set.
    For example, the points $(U_6,U_7), (U_7,U_8), (U_8,U_6)$ are linked by the red dotted line, and correspond to the $3$-cycle $(9,4,12)$ in $\tau$.}
    \label{fig: exemple_t_cyclic}
\end{figure}

See \Cref{fig: exemple_t_cyclic} for an illustration.
In practice, instead of an arbitrary $t$-cyclic permutation $\fs$, we will use a canonical one on an \textit{ad hoc} set of indices:
let $\Vert_t := \left\{\big. (p,k,l) \,:\, p\in[n],k\in[t_p],l\in[p]\right\}$, and $\s : (p,k,l) \mapsto (p,k,l\p1)$ where $l\p1$ is taken modulo $p$.
Then the construction of \Cref{lem: geometric construction t-cyclic} works with a family $\left(U_{p,k}^l\right)_{(p,k,l)\in\Vert_t}$ of i.i.d.~$\Unif{[0,1]}$ random variables, the points $Z_{p,k}^l := \left(U_{p,k}^l,U_{p,k}^{l+1}\right)$, and the point set $\pts := \left\{Z_{p,k}^l : (p,k,l)\in\Vert_t\right\}$.
With this notation, we say that $\pts$ is a \emph{geometric construction of $t$}.

\begin{proof}[Proof of \Cref{lem: geometric construction t-cyclic}]
    Let $\sigma = (\sigma_1, \dots, \sigma_n) := \st{U_1, \dots, U_n}$, i.e.~$U_i$ is the $\sigma_i$-th lowest number in $\mathcal{U} := \left\{U_i, i\in[n]\right\}$.
    Observe that $\mathcal{U}$ is the list of x-coordinates of $\pts$ and its list of y-coordinates as well;
    consequently, $U_i$ is the $\sigma_i$-th lowest x-coordinate of points in $\pts$ and $U_{\fs(i)}$ is the $\sigma_{\fs(i)}$-th lowest y-coordinate of points in $\pts$..
    Since $\left( U_i, U_{\fs(i)} \right)$ is in $\pts$, this precisely means that $\tau(\sigma_i) = \sigma_{\fs(i)}$.
    Therefore, $\tau = \sigma\circ \fs\circ \sigma^{-1}$.
    Using Lemmas~\ref{lem: standardization} and~\ref{lem: randomly conjugating}, this concludes the proof.
\end{proof}

As we explain in \Cref{sec: results LIS LDS,sec: results RS,sec: results records}, monotone subsequences and records of $\tau=\perm{\pts}$ can be read directly on $\pts$.
This allows us to study these statistics of the permutation by investigating the behavior of the point process in certain regions of the plane.
To this aim, a list of useful properties of the geometric construction may be found in \Cref{sec: prelim}.
Regarding pattern counts, we rather use the ``weak dependency'' of points in $\pts$ (see \Cref{sec: results pattern} for more details).

\subsection{Notation}
\label{sec: notation}

We denote by $\S_n$ the set of permutations of $[n]$ and by $\Delta$ the diagonal of $\carre$.
Throughout this paper we consider the standard partial order on $\R^2$ defined by $(x,y) \le (x',y')$ when $x\le x'$ and $y\le y'$.\\

Let $t$ be a cycle type of size $n$, and let $\pts$ be a geometric construction of $t$.
We can decompose it into its cycles $\pts = \bigcup_{p\in [n] , k\in [t_p]}\pts_{p,k}$ where $\pts_{p,k} := \left\{ Z_{p,k}^l : l\in[p] \right\}$.
We can also decompose it into $\pts_\Delta := \pts\cap\Delta$ and $\check\pts := \pts\setminus\Delta$, and we write $\check t := (0,t_2,t_3,\dots,t_n)$ for the cycle type of size $\check n := n-t_1$ with no fixed point.
It is straightforward to check that $\check\pts$ is a geometric construction of $\check t$.

Define a graph $\L_t:=(\Vert_t,\Edges_t)$ with vertice $\Vert_t$ and edges between $i$ and $\s(i)$ for each $i\in\Vert_t$ (forgetting self-loops).
We refer to $\L_t$ as the ``dependency graph'' associated with $t$.
See \Cref{fig: ex dependency graph} for an example, and \Cref{sec: dependency graph} for an explanation of this name.\\

\begin{figure}
    \centering
    \includegraphics[scale=.8]{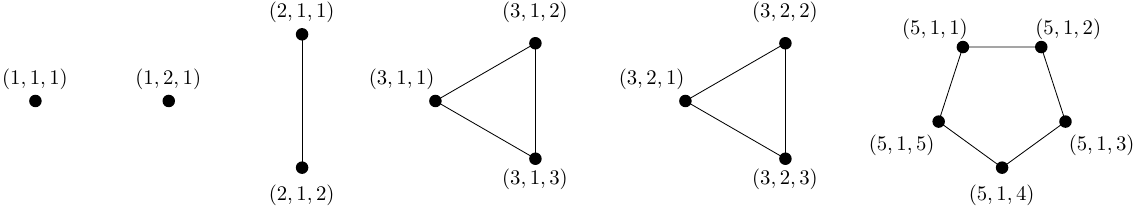}
    \caption{The dependency graph $\L_t$ associated with the cycle type $t=(2,1,2,0,1)$.}
    \label{fig: ex dependency graph}
\end{figure}

Let $(a_n)_{n\in\N}$ be a sequence of positive numbers.
We say that a sequence of random variables $(X_n)_{n\in\N}$ is a $\O_\P(a_n)$ as $n\to\infty$ if 
\begin{equation*}
    \sup_{n\in\N} \prob{\card{X_n} > a_n M} \cv{M}{+\infty} 0 .
\end{equation*}
Also, $(X_n)_{n\in\N}$ is a $o_\P(a_n)$ as $n\to\infty$ if $X_n/a_n \cv{n}{\infty} 0$ in probability.
Finally, a sequence $(A_n)_{n\in\N}$ of events happens with high probability (w.h.p.) as $n\to\infty$ if $\prob{A_n} \cv{n}{\infty} 1$.

\section{Results: universality and beyond}

\subsection{Longest monotone subsequences}
\label{sec: results LIS LDS}

Let $\tau$ be a permutation of $[n]$.
An increasing subsequence of $\tau$ is a sequence of indices $i_1<\dots<i_\ell$ such that $\tau(i_1)<\dots<\tau(i_\ell)$, and the maximum length of an increasing subsequence of $\tau$ is denoted by $\LIS(\tau)$.
We may define analogously $\LDS(\tau)$ as the maximum length of a decreasing subsequence of $\tau$.

The study of monotone subsequences in permutations has been an active and captivating field of research; see \cite{R15} for an introduction.
In particular when $\tau_n$ is uniformly random, the law of $\LIS(\tau_n)$ (which is the same as that of $\LDS(\tau_n)$ in this case) is now well-understood: it behaves asymptotically as $2\sqrt n$ \cite{VK77}, satisfies a large deviation principle \cite{DZ99,S98}, and admits Tracy--Widom fluctuations of order $n^{1/6}$ \cite{BDJ99}.

One might expect these results to mostly hold when the law of $\tau_n$ is conjugation-invariant.
The first advance on this question was obtained by Baik and Rains, who studied random involutions \cite{BR01}.
More recently, Kammoun \cite{K18,K23} and Guionnet \cite{GK23} could settle the case of conjugation-invariant random permutations with few cycles.
Here we start by stating first order asymptotic results and concentration inequalities for the longest decreasing subsequences of uniform permutations with given cycle types and no fixed points.

\begin{theorem}\label{th: LDS t-cyclic}
    For any $\delta>0$ there exists $c_\delta>0$ such that the following holds.
    For each $n$, let $t^{(n)}$ be a cycle type of size $n$ such that $t_1^{(n)}=0$ and let $\tau_n$ be a uniform $t^{(n)}$-cyclic permutation.
    Then:
    \begin{equation*}
        \prob{\frac{1}{\sqrt{n}}\LDS(\tau_n) < 2(1-\delta)} \le \exp\left(\big.-c_\delta n\right)
        \quad\text{;}\quad 
        \prob{\frac{1}{\sqrt{n}}\LDS(\tau_n) > 2(1+\delta)} \le \exp\left(-c_\delta \sqrt{n}\right) .
    \end{equation*}
    In particular $\frac{1}{\sqrt{n}}\LDS(\tau_n)\cv{n}{\infty}2$ in probability and in $L^q$ for all $q\ge1$.
\end{theorem}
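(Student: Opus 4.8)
The plan is to work entirely through the geometric construction and compare with the i.i.d.\ case. First I would write $\tau_n=\perm{\pts_n}$ with $\pts_n$ a geometric construction of $t^{(n)}$; since $t_1^{(n)}=0$, the process $\pts_n$ avoids the diagonal and consists of exactly $n$ points. By the properties recalled in \Cref{sec: prelim}, a decreasing subsequence of $\perm\pts$ is precisely a subset of $\pts$ that is an antichain for the coordinatewise order on $\R^2$, so $\LDS(\tau_n)$ is the maximal size of such an antichain, which I will call a decreasing chain of $\pts_n$. The benchmark is the process $\pts_n^\star$ of $n$ i.i.d.\ $\Unif\carre$ points: there $\LDS(\perm{\pts_n^\star})$ concentrates around $2\sqrt n$ (the constant from \cite{VK77,LS77}), with lower tail of order $\exp(-c_\delta n)$ (the large deviation principle of \cite{DZ99}) and upper tail of order $\exp(-c_\delta\sqrt n)$ (from the Tracy--Widom asymptotics of \cite{BDJ99}) --- exactly the two speeds appearing in the statement. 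The task is to transfer both estimates to $\pts_n$, the only difficulty being the dependencies: the points of a common cycle $\pts_{p,k}$ share coordinates, and the uniform-measure tools (Robinson--Schensted, hook lengths) underlying the i.i.d.\ bounds are not directly available for a general conjugacy class.

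For the lower bound I would use the cycle decomposition $\pts_n=\bigcup_{p,k}\pts_{p,k}$: the cycles are independent, and inside a cycle of length $p$ the point $Z_{p,k}^l$ is independent of every $Z_{p,k}^{l'}$ with $l'\notin\{l\m1,l,l\p1\}$. Consequently, for any fixed region $A$, the count $\#(\pts_n\cap A)=\sum_{p,k}\#(\pts_{p,k}\cap A)$ is a sum of independent terms, the $(p,k)$-th of which is a $1$-dependent sum of $p$ indicators, hence sub-Gaussian with variance proxy $\O(p)$; summing, $\#(\pts_n\cap A)$ is sub-Gaussian with variance proxy $\O(n)$, so $\prob{|\#(\pts_n\cap A)-n\,\Leb(A)|>\varepsilon n}\le 2\exp(-c\varepsilon^2 n)$. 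Thus, for any fixed finite partition of $\carre$, the empirical measure $\frac1n\sum_{Z\in\pts_n}\delta_Z$ is within $\delta'$ of Lebesgue measure on every cell with probability $1-\exp(-c_{\delta'}n)$. I would then invoke a ``$\delta$-coarse'' version of the Vershik--Kerov lower bound: a decreasing chain of length $2(1\m\delta)\sqrt n$ can be produced knowing only that the configuration has density at least $1\m\delta'$ on each cell of a suitably fine (but $\delta$-dependent, hence $O(1)$) partition following a near-optimal profile. Combining the two gives the first inequality with rate $\exp(-c_\delta n)$. (For conjugacy classes that are large, in the sense that $\prod_p p^{t_p}t_p!=\exp(o(\sqrt n))$, one may alternatively use that $\tau_n$ is absolutely continuous with subexponential density with respect to a uniform permutation of $[n]$ and quote \cite{DZ99} directly.)

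For the upper bound the target speed is only $\sqrt n$, reflecting that $\LDS$ overshoots through a single anomalously long chain. Here the naive union bound over decreasing $\ell$-chains of $\pts_n$ is genuinely insufficient: it already yields only the constant $e$ for i.i.d.\ points, and the coordinate sharing inside cycles can only make $\E[N_\ell]$ larger, so to reach the sharp constant $2$ one must import a sharp estimate rather than a first-moment one. I would proceed by the same dichotomy: for large conjugacy classes, transfer the uniform-permutation upper tail through the subexponential density bound, which costs only an $\exp(o(\sqrt n))$ factor; for the remaining classes --- those with many cycles, the involution being the extreme case of \cite{BR01} --- adapt the sharper enumeration of long chains (in the spirit of L\"owe--Merkl, or equivalently a bound obtained through the Robinson--Schensted shape) to the point process, checking that within a cycle a chain through $c$ points involves only $\O(c)$ of that cycle's uniforms so that the dependencies contribute only lower-order corrections; the two regimes are glued by the sub-additivity $\LDS(\tau_n)\le\LDS(\tau_n|_{S})+\LDS(\tau_n|_{S^c})$ along a partition of $[n]$ into the positions of short versus long cycles. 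I expect \textbf{this step} --- obtaining the \emph{sharp} constant $2$ in the upper bound uniformly over all cycle types and robustly to the within-cycle dependencies, given that the classical proofs are tied to the uniform measure --- to be the main obstacle. Finally, the two tail bounds immediately give $\LDS(\tau_n)/\sqrt n\to2$ in probability, and since the upper tail is integrable against any power they give uniform integrability of $(\LDS(\tau_n)/\sqrt n)^q$, hence convergence in $L^q$ for every $q\ge1$.
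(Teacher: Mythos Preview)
Your lower-bound sketch is roughly on track (concentration of point counts plus a coarse construction will work), but your upper-bound strategy has a genuine gap, and the dichotomy you propose is not the right route.

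The density-transfer argument is essentially never applicable: the Radon--Nikodym density of the uniform measure on the conjugacy class $C_t$ against the uniform measure on $\S_n$ is $\prod_p p^{t_p}t_p!$, and for almost any cycle type with $t_1=0$ this is at least exponential in $n$ (e.g.\ $2^{n/2}(n/2)!$ for products of transpositions), not $\exp(o(\sqrt n))$. So your ``large conjugacy class'' regime is vacuous outside a tiny family of near-$n$-cycle types, and you are left with the ``many cycles'' regime for which your adaptation of L\"owe--Merkl is only a hope, not an argument. The sub-additivity gluing $\LDS(\tau|_S)+\LDS(\tau|_{S^c})$ is also too lossy to keep the constant $2$.

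The idea you are missing is purely geometric and dissolves the obstacle entirely. Two facts about $\pts$ do all the work:
\begin{itemize}
\item \emph{Off-diagonal rectangles are genuinely i.i.d.}\ (\Cref{lem: iid outside diagonal t-cyclic}): if $I,J\subset[0,1]$ are essentially disjoint intervals, then $\pts\cap(I\times J)$ is a random-sized family of i.i.d.\ uniform points in $I\times J$. The reason is that if $Z_{p,k}^l=(U^l,U^{l+1})\in I\times J$ then automatically $Z_{p,k}^{l\pm1}\notin I\times J$, so points landing there never share a coordinate.
\item \emph{Three-colouring} (\Cref{lem: decomposition t-cyclic}): one can split $\pts=\pts^{(1)}\cup\pts^{(2)}\cup\pts^{(3)}$ with each $\pts^{(i)}$ a family of $n/3+O(1)$ i.i.d.\ uniform points, by $3$-colouring the dependency graph $\L_t$.
\end{itemize}
With these in hand, the paper runs the Deuschel--Zeitouni grid argument \cite{DZ95}. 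Slice $\carre$ into a $K\times K$ grid. A down-right path meets at most one diagonal cell $C_{i,i}$, and on that cell $\LDS$ is bounded using the three-colouring plus the uniform upper tail \cite{DZ99}. All remaining columns along any admissible down-right sequence of cells are off-diagonal rectangles, so their points are i.i.d.\ uniform by the first fact and the uniform upper tail \cite{DZ99} applies directly; Cauchy--Schwarz on the column heights recovers the constant $2$, and a union bound over the $O_\delta(1)$ many admissible sequences yields the $\exp(-c_\delta\sqrt n)$ upper tail. No case analysis on the cycle type is needed.

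For the lower bound the paper is even simpler than your sketch: take $C=[0,1/2]\times[1/2,1]$ and $C^*=[1/2,1]\times[0,1/2]$; then $\LDS(\pts)\ge\LDS(\pts\cap C)+\LDS(\pts\cap C^*)$, and on each of $C,C^*$ the points are i.i.d.\ uniform of size $\approx n/4$ (by \Cref{lem: iid outside diagonal t-cyclic} and \Cref{cor: controlling number of points t-cyclic}), so the uniform lower-tail large deviation \cite{DZ99} gives $\LDS(\pts\cap C)\ge(1-\delta)\sqrt n$ except with probability $\exp(-c_\delta n)$.
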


It is not difficult to add fixed points in the previous theorem, and to consider random cycle types instead of deterministic ones.
This yields the following corollary.

\begin{cor}\label{cor: LDS conjugation-invariant}
    For any $\delta>0$ there exists $c_\delta>0$ such that the following holds.
    Let $(\tau_n)_n$ be a sequence of conjugation-invariant random permutations, where $\tau_n$ has size $n$ and (random) cycle type $t^{(n)}$.
    Then for any $n$:
    \begin{equation*}
    \left\{
        \begin{array}{ll}
        \prob{\LDS(\tau_n) < 2(1-\delta)\sqrt{n-t_1^{(n)}}} \le \mean{\exp\left(\big.-c_\delta \left(n-t_1^{(n)}\right)\right)} ;\vspace{.5em}\\ 
        \prob{\LDS(\tau_n) > 2(1+\delta)\sqrt{n-t_1^{(n)}}} \le \mean{\exp\left(-c_\delta \sqrt{n-t_1^{(n)}}\right)} .
        \end{array}
    \right.
    \end{equation*}
    In particular if $n-t_1^{(n)}\to\infty$ in probability then $\frac{1}{1+\sqrt{n-t_1^{(n)}}}\LDS(\tau_n) \to 2$ in probability and in $L^q$ for all $q\ge1$.
\end{cor}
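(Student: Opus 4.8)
The plan is to deduce \Cref{cor: LDS conjugation-invariant} from \Cref{th: LDS t-cyclic} by conditioning on the cycle type and then discarding the fixed points, using only tools already set up. Conditioning on the (random) cycle type $t^{(n)}$ reduces matters to a \emph{deterministic} cycle type: each of the two probabilities to be bounded is the expectation over $t^{(n)}$ of the corresponding conditional probability, and given $t^{(n)}=t$ the permutation $\tau_n$ is uniform $t$-cyclic; so it suffices to prove both inequalities with the outer expectation and the conditioning removed — i.e.\ for a uniform $t$-cyclic permutation $\tau$ with $t$ deterministic — and then integrate over the law of $t^{(n)}$, the constant $c_\delta$ of \Cref{th: LDS t-cyclic} being uniform in the cycle type. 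So fix a deterministic cycle type $t$, let $\tau$ be uniform $t$-cyclic, write $\check n:=n-t_1$, and let $\check\tau:=\perm{\check\pts}$ be the permutation obtained by deleting the fixed points of $\tau$ and relabelling. As noted in the excerpt, $\check\pts$ is a geometric construction of $\check t$, so $\check\tau$ is uniform $\check t$-cyclic with \emph{no} fixed points and of size $\check n$, and \Cref{th: LDS t-cyclic} applies to it; note $\check n\in\{0\}\cup\{2,3,\dots\}$, the case $\check n=0$ being trivial.

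The one thing to add is the elementary sandwich
\begin{equation*}
    \LDS(\check\tau)\ \le\ \LDS(\tau)\ \le\ \LDS(\check\tau)+1 .
\end{equation*}
The left inequality holds because a decreasing subsequence of $\check\tau$ lifts to one of $\tau$. For the right inequality, a decreasing subsequence of $\tau$ contains at most one fixed point of $\tau$ — two fixed points $i<j$ satisfy $\tau(i)=i<j=\tau(j)$ and so cannot both appear — and erasing that point leaves a decreasing subsequence supported on non-fixed positions, hence one of $\check\tau$. (Geometrically: in $\pts=\pts_\Delta\sqcup\check\pts$ a decreasing subsequence is an antichain for the product order on $\R^2$, while $\pts_\Delta\subset\Delta$ is a chain, so it meets $\pts_\Delta$ in at most one point.) The lower tail is now immediate, since $\{\LDS(\tau)<2(1-\delta)\sqrt{\check n}\}\subseteq\{\LDS(\check\tau)<2(1-\delta)\sqrt{\check n}\}$ and \Cref{th: LDS t-cyclic} bounds the latter by $\exp(-c_\delta\check n)$ with the same $c_\delta$. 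For the upper tail, $\{\LDS(\tau)>2(1+\delta)\sqrt{\check n}\}\subseteq\{\LDS(\check\tau)>2(1+\delta)\sqrt{\check n}-1\}$, and once $\check n\ge1/\delta^2$ one has $2(1+\delta)\sqrt{\check n}-1\ge2(1+\tfrac\delta2)\sqrt{\check n}$, so \Cref{th: LDS t-cyclic} applied with $\delta/2$ bounds this by $\exp(-c_{\delta/2}\sqrt{\check n})$; taking the corollary's constant to be $\min(c_\delta,c_{\delta/2})$ and treating the remaining bounded range of $\check n$ separately yields the second displayed bound. Re-integrating over $t^{(n)}$ finishes the two inequalities.

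For the ``in particular'' part, suppose $\check n:=n-t_1^{(n)}\to\infty$ in probability. Truncating at a level $K$ shows that $\mean{\exp(-c_\delta\check n)}$ and $\mean{\exp(-c_\delta\sqrt{\check n})}$ tend to $0$ for every fixed $\delta>0$, so the two tail bounds give $\frac{1}{\sqrt{\check n}}\LDS(\tau_n)\to2$ in probability; convergence in $L^q$ then follows by combining this with the upper tail estimate (which is uniform in the cycle type), exactly as for \Cref{th: LDS t-cyclic}, through a routine uniform-integrability argument. I expect the only genuine — and still minor — obstacle to be the bookkeeping around the additive $+1$ in the sandwich: the reduction itself is immediate, but making the transferred bound valid uniformly in $n$ forces the (harmless) $\delta$- and $c_\delta$-adjustments above, the estimate carrying information only when $\check n\gg1/\delta^2$ in any case.
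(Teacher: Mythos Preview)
Your proposal is correct and follows essentially the same route as the paper: condition on the cycle type, pass to the fixed-point-free permutation $\check\tau$ via the geometric construction, use the sandwich $\LDS(\check\tau)\le\LDS(\tau)\le\LDS(\check\tau)+1$, and apply \Cref{th: LDS t-cyclic}. If anything you are slightly more careful than the paper in handling the additive $+1$ for the upper tail (the paper absorbs it implicitly into the constant), and your $L^q$ argument via uniform integrability is the same in spirit as the paper's, which bounds $\LDS(\tau_n)\le n-t_1^{(n)}+1$ and invokes the conditional tail bounds.
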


Although increasing and decreasing subsequences share the same distribution in uniformly random permutations, this fact does not hold for conjugation-invariant random permutations.
The study of increasing subsequences in this context is trickier, and we were only able to derive their first order asymptotics up to a multiplicative constant.
As before, we start by stating our result for uniform permutations with given cycle types and no fixed points, and then we generalize it to all conjugation-invariant permutations.

\begin{prop}\label{th: LIS t-cyclic}
    For any $\delta>0$ there exists $c_\delta>0$ such that the following holds.
    For each $n$, let $t^{(n)}$ be a cycle type of size $n$ such that $t_1^{(n)}=0$ and let $\tau_n$ be a uniform $t^{(n)}$-cyclic permutation.
    Then:
    \begin{equation*}
        \prob{\frac{1}{\sqrt{n}} \LIS(\tau_n) < 2(1-\delta) } \le \exp\left(-c_\delta n\right)
        \quad\text{ ; }\quad
        \prob{\frac{1}{\sqrt{n}} \LIS(\tau_n) > 2(1+\delta)\sqrt{3} } \le \exp\left(-c_\delta \sqrt{n}\right).
    \end{equation*}
\end{prop}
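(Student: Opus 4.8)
The plan is to argue both bounds via the ``dictionary'' recorded in \Cref{sec: prelim}: writing $\pts$ for a geometric construction of $t^{(n)}$, the quantity $\LIS(\tau_n)$ is the maximal size of a \emph{chain} of $\pts$, i.e.\ of a subset totally ordered for $\le$. Everything will then be reduced to the classical $2\sqrt N$ asymptotics, with exponential upper- and lower-tail bounds, for the longest chain of $N$ i.i.d.\ $\Unif{\carre}$ points \cite{VK77,AD95}.

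For the \emph{upper bound} I would use a colouring trick. Because $t^{(n)}_1=0$, every component of $\L_{t^{(n)}}$ is a cycle, hence $\L_{t^{(n)}}$ is properly $3$-colourable; choose such a colouring with classes $V_1,V_2,V_3$ and set $\pts^{(c)}:=\{Z_i:i\in V_c\}$. Two points $Z_i,Z_j$ with $i,j$ non-adjacent in $\L_{t^{(n)}}$ are built from disjoint pairs of the underlying uniforms, so each $\pts^{(c)}$ is a family of $|V_c|$ i.i.d.\ $\Unif{\carre}$ points (each $Z_i$ being a pair of two distinct uniforms, as $p\ge2$). Since any chain $C$ of $\pts$ decomposes as $\bigsqcup_c (C\cap\pts^{(c)})$ with each piece a chain of the corresponding $\pts^{(c)}$,
\[
\LIS(\tau_n)\ \le\ \sum_{c=1}^3\LIS\!\big(\perm{\pts^{(c)}}\big),
\]
and the i.i.d.\ upper-tail bound gives $\LIS(\perm{\pts^{(c)}})\le2(1+\delta')\sqrt{|V_c|}$ for the classes of linear size, while the (necessarily at most two) classes of sublinear size are controlled crudely by a first-moment bound over increasing subsequences, which costs a bad constant but only a very small probability. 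Summing and applying Cauchy--Schwarz, $\sum_c\sqrt{|V_c|}\le\sqrt{3n}$, produces the bound $2\sqrt3\,(1+\delta)\sqrt n$; the factor $\sqrt3$ is exactly the loss from splitting into three i.i.d.\ pieces, and seems hard to avoid without a substitute for the RSK argument available in the uniform case.

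For the \emph{lower bound} I would run a ``diagonal grid'' argument. Cut $\carre$ into $m^2$ boxes $B_{a,b}$ of side $1/m$ for a large constant $m=m_\delta$; concatenating along the diagonal a longest chain of $\pts\cap B_{a,a}$ for $a=1,\dots,m$ yields a chain of $\pts$, so it suffices to produce a chain of length $(2-o_m(1))\sqrt n/m$ inside each diagonal box, outside an event of probability $\le\exp(-c_\delta n)$. Points coming from cycles of length $\ge3$ are handled by the i.i.d.\ comparison: a point of $\pts\cap B_{a,a}$ is, conditionally, ``isolated'' (both cycle-neighbours outside $B_{a,a}$) with probability $1-O(1/m)$, isolated points from distinct positions use disjoint uniforms and are thus i.i.d.\ $\Unif{B_{a,a}}$, and the non-isolated ones form an $O(1/m)$ fraction with exponential concentration. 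The delicate case is $2$-cycles, whose two points are mirror images across $\Delta$ (hence incomparable) and lie in $B_{a,a}$ together: keeping only the one above $\Delta$ puts i.i.d.\ points in the half-box $B_{a,a}\cap\{x<y\}$, where a path hugging the hypotenuse sees twice the intensity --- so that, combining inside each box the $\ge3$-cycle isolated points with these half-box points, one recovers a chain of the full length $(2-o_m(1))\sqrt n/m$ (this is precisely what makes the constant come out to $2$ rather than $\sqrt2$, since $2t_2\le n$). Summing over $a$, letting $m\to\infty$, and taking a union bound finishes the argument.

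I expect the main obstacle to be exactly this last point: getting the \emph{sharp} constant $2$ in the lower bound uniformly over all fixed-point-free cycle types. The plain diagonal-grid comparison is tight when most of the mass of $t^{(n)}$ lies in long cycles but loses a factor $\sqrt2$ for cycle types dominated by $2$-cycles --- the fixed-point-free involution case, which is essentially the content of \cite{BR01} --- so one must interface it with the half-box/triangle estimate, interpolate cleanly between the two regimes, and keep genuine exponential (not merely stretched-exponential) control of the error throughout.
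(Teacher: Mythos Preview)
Your upper bound is essentially the paper's argument: the paper invokes \Cref{lem: decomposition t-cyclic}, which already provides three \emph{balanced} i.i.d.\ families $\pts^{(1)},\pts^{(2)},\pts^{(3)}$ of size $n/3\pm1$, so your worry about sublinear colour classes never arises. One then writes $\LIS(\pts)\le\sum_c\LIS(\pts^{(c)})$ and applies \cite[Theorem~2]{DZ99} to each piece of size $\approx n/3$, giving the bound $2(1+\delta)\sqrt{3n}$ outside probability $\exp(-c_\delta\sqrt n)$ in one line.

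For the lower bound, your diagonal-box route can be pushed through, but it is a substantial detour, and the obstacle you flag --- recovering the sharp constant $2$ when $2$-cycles dominate, via triangle estimates and an interpolation between regimes --- is one the paper avoids entirely. The trick is to use the \emph{super}-diagonal boxes
\[
C_{i,i+1}=\left[\tfrac{i-1}{K},\tfrac{i}{K}\right]\times\left[\tfrac{i}{K},\tfrac{i+1}{K}\right],
\qquad i=1,\dots,K-1,
\]
instead of the diagonal ones. These lie entirely off $\Delta$, so \Cref{lem: iid outside diagonal t-cyclic} applies directly: $\pts\cap C_{i,i+1}$ is an i.i.d.\ uniform sample, of size concentrated around $n/K^2$ by \Cref{cor: controlling number of points t-cyclic}, \emph{regardless of the cycle type}. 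Concatenating the longest chains along $i=1,\dots,K-1$ gives
\[
\LIS(\pts)\ \ge\ \sum_{i=1}^{K-1}\LIS(\pts\cap C_{i,i+1})\ \ge\ (K-1)\cdot 2(1-\delta/2)\frac{\sqrt n}{K}
\]
outside probability $\exp(-c_\delta n)$ by \cite[Theorem~1]{DZ99}, and taking $K=\lceil 2/\delta\rceil$ yields the constant $2$ immediately. No isolated-point conditioning, no triangle geometry, no case analysis on $t_2$; the shift by one cell off the diagonal is exactly what makes the argument uniform in $t^{(n)}$.
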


\begin{cor}\label{cor: LIS conjugation-invariant}
    For any $\delta>0$ there exists $c_\delta>0$ such that the following holds.
    Let $(\tau_n)_n$ be a sequence of conjugation-invariant random permutations, where $\tau_n$ has size $n$ and (random) cycle type $t^{(n)}$.
    Then for any $n$:
    \begin{equation*}
    \left\{
        \begin{array}{ll}
        \prob{\big.\LIS(\tau_n) < 2(1-\delta)\sqrt{n}}
        \le \exp\left(-c_\delta n\right) ;\vspace{.5em}\\
        \prob{\LIS(\tau_n) > t_1^{(n)} + 2(1+\delta)\sqrt{3\left(n-t_1^{(n)}\right)}}
        \le \mean{\exp\left(-c_\delta \sqrt{n-t_1^{(n)}}\right)} .
        \end{array}
    \right.
    \end{equation*}
    In particular if $\frac{1}{\sqrt n}t_1^{(n)}\to\alpha\ge0$ in probability then $\frac{1}{\sqrt n}\LIS(\tau_n)$ is asymptotically bounded between $2$ and $\alpha+2\sqrt 3$ in probability.
\end{cor}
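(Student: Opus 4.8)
The plan is to condition on the cycle type, reduce everything to the fixed-point-free case already settled in \Cref{th: LIS t-cyclic}, and then integrate. Conditionally on its cycle type $t$, the permutation $\tau_n$ is a uniform $t$-cyclic permutation, and deleting its fixed points leaves a uniform $\check t$-cyclic permutation of size $\check n:=n-t_1^{(n)}$; call it $\check\tau_n$ (equivalently, realize $\tau_n$ via \Cref{lem: geometric construction t-cyclic} and set $\check\tau_n:=\perm{\check\pts}$, which is a geometric construction of $\check t$). The whole argument rests on the two elementary, deterministic inequalities
\[
\max\!\left(t_1^{(n)},\,\LIS(\check\tau_n)\right)\ \le\ \LIS(\tau_n)\ \le\ t_1^{(n)}+\LIS(\check\tau_n):
\]
the lower one because the fixed points of $\tau_n$ form an increasing subsequence on their own and any increasing subsequence of $\check\tau_n$ lifts to one of $\tau_n$; the upper one because an increasing subsequence of $\tau_n$ splits into its fixed points (at most $t_1^{(n)}$ of them) and a subsequence supported on non-fixed positions, which is an increasing subsequence of $\check\tau_n$. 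These are of the kind collected in \Cref{sec: prelim}.

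For the upper tail, the right-hand inequality gives $\{\LIS(\tau_n)>t_1^{(n)}+2(1+\delta)\sqrt{3\check n}\}\subseteq\{\LIS(\check\tau_n)>2(1+\delta)\sqrt{3\check n}\}$. Conditioning on $\{t^{(n)}=t\}$ and applying the upper bound of \Cref{th: LIS t-cyclic} to the fixed-point-free permutation $\check\tau_n$ of size $\check n$ bounds the conditional probability by $\exp(-c_\delta\sqrt{\check n})$; integrating over the law of $t^{(n)}$ yields the stated bound $\mean{\exp(-c_\delta\sqrt{n-t_1^{(n)}})}$.

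For the lower tail, the key observation is that on the event $\{\LIS(\tau_n)<2(1-\delta)\sqrt n\}$ the inequality $\LIS(\tau_n)\ge t_1^{(n)}$ forces $t_1^{(n)}<2\sqrt n$, hence $\check n\ge n-2\sqrt n$; a one-line computation (square both sides, assuming $0<\delta<1$) then shows that for $n$ larger than some $n_0(\delta)$ one has $2(1-\delta/2)\sqrt{\check n}\ge 2(1-\delta)\sqrt n$, so that on that event $\LIS(\check\tau_n)\le\LIS(\tau_n)<2(1-\delta/2)\sqrt{\check n}$. Conditioning on $\{t^{(n)}=t\}$ with $t_1<2\sqrt n$ (whence $\check n\ge n/2$) and applying the lower bound of \Cref{th: LIS t-cyclic} with parameter $\delta/2$ bounds the conditional probability by $\exp(-c_{\delta/2}\check n)\le\exp(-c_{\delta/2}n/2)$; integrating gives $\prob{\LIS(\tau_n)<2(1-\delta)\sqrt n}\le\exp(-c_{\delta/2}n/2)$ for $n\ge n_0(\delta)$, the finitely many smaller $n$ being absorbed by shrinking the final constant. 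This lower tail is the only place where one cannot simply quote \Cref{th: LIS t-cyclic}: a blind application to $\check\tau_n$ would give only $2(1-\delta)\sqrt{\check n}$, which a priori is much smaller than $2(1-\delta)\sqrt n$, and the remedy is precisely the remark that the event under study leaves very few fixed points outside the subsequence. Everything else is bookkeeping.

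Finally, the ``in particular'' convergences. The estimate $\exp(-c_\delta n)\to0$, valid for every $\delta>0$, gives $\prob{\frac1{\sqrt n}\LIS(\tau_n)<2-\varepsilon}\to0$ for all $\varepsilon>0$. For the upper side, $\frac1{\sqrt n}t_1^{(n)}\to\alpha$ in probability implies $\frac1n t_1^{(n)}\to0$, hence $\frac{\check n}{n}\to1$ and $\check n\to\infty$ in probability; by dominated convergence $\mean{\exp(-c_\delta\sqrt{\check n})}\to0$, so with probability tending to $1$ one has $\frac1{\sqrt n}\LIS(\tau_n)\le\frac1{\sqrt n}t_1^{(n)}+2(1+\delta)\sqrt3\,\sqrt{\check n/n}$, whose right-hand side converges in probability to $\alpha+2(1+\delta)\sqrt3$. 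Letting $\delta\to0$ along these estimates, with the usual juggling of $\varepsilon$ and $\delta$, yields that $\frac1{\sqrt n}\LIS(\tau_n)$ is asymptotically bracketed between $2$ and $\alpha+2\sqrt3$ in probability.
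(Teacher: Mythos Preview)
Your proof is correct and follows the same approach as the paper: the key deterministic sandwich $\max(t_1^{(n)},\LIS(\check\tau_n))\le\LIS(\tau_n)\le t_1^{(n)}+\LIS(\check\tau_n)$, conditioning on the cycle type, and a case split on the size of $t_1^{(n)}$ for the lower tail (the paper phrases this as ``discussing whether $t_1^{(n)}\ge 2(1-\delta)\sqrt n$''). Your write-up simply spells out in full the details that the paper leaves implicit.
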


Let us briefly explain the techniques used to prove the results of this section.
Deuschel and Zeitouni \cite{DZ95} studied the first order asymptotics of longest monotone subsequences in $\perm{Z_1,\dots,Z_n}$, where $Z_1,\dots,Z_n$ are i.i.d.~under certain probability densities on $\carre$ (such random permutations are called {\em locally uniform} in \cite{S23}).
This is made possible by the following key observation: 
increasing and decreasing subsequences of $\tau=\perm{\pts}$ correspond to up-right and down-right paths of points in $\pts$.
The authors of \cite{DZ95} then slice the unit square into small rectangles in which the points are almost uniformly distributed, so that they may locally apply the result of \cite{VK77}, and finally stitch these local paths of points into one global path.
When studying uniform permutations in given conjugacy classes, the same method can be adapted to the geometric construction of \Cref{lem: geometric construction t-cyclic}.

Finally, we state a reasonable conjecture in the direction of \Cref{th: LIS t-cyclic} and \Cref{cor: LIS conjugation-invariant}.
It was already established for random involutions in \cite{BR01}, and for conjugation-invariant random permutations satisfying certain cycle constraints in \cite[Theorem~1.2]{K18} and \cite[Theorem~3]{K23}.

\begin{conj}\label{conj: LIS conjugation-invariant w/o fixed points}
    Let $(\tau_n)_n$ be a sequence of conjugation-invariant random permutations, where $\tau_n$ has size $n$ and (random) cycle type $t^{(n)}$.
    Suppose that $t_1^{(n)}/\sqrt{n}\rightarrow0$ in probability.
    Then
    \begin{equation*}
        \frac{1}{\sqrt{n}}\LIS(\tau_n) \cv{n}{\infty} 2
    \end{equation*}
    in probability.
\end{conj}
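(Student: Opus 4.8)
We outline a possible strategy, and indicate where the difficulty lies. First, the lower bound is already in hand: the first inequality of \Cref{cor: LIS conjugation-invariant} requires no hypothesis on the cycle type and gives $\prob{\LIS(\tau_n)<2(1-\delta)\sqrt n}\to 0$ for every $\delta>0$. So the entire content of the conjecture is the matching upper bound $\LIS(\tau_n)\le 2(1+\delta)\sqrt n$ w.h.p., i.e.\ improving the multiplicative factor $\sqrt 3$ of \Cref{cor: LIS conjugation-invariant} down to $1$ under the extra assumption $t_1^{(n)}/\sqrt n\to 0$. Arguing as in \Cref{cor: LIS conjugation-invariant}, one may condition on $t^{(n)}$ and remove its (at most $t_1^{(n)}=o(\sqrt n)$) fixed points, since deleting a fixed point changes $\LIS$ by at most $1$; thus it suffices to treat a uniform $t$-cyclic permutation $\tau=\perm{\pts}$ with $t_1=0$, where $\pts$ is the geometric construction of \Cref{lem: geometric construction t-cyclic}, and to prove that w.h.p.\ no up-right chain of $\pts$ contains more than $2(1+\delta)\sqrt n$ points.

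The plan is to sharpen the Deuschel--Zeitouni slicing underlying \Cref{th: LIS t-cyclic}. Fix $M$ large, partition $\opencarre$ into the $M^2$ squares $Q_{a,b}=[\tfrac{a-1}{M},\tfrac aM)\times[\tfrac{b-1}{M},\tfrac bM)$, and note that any up-right chain $\gamma$ of $\pts$ meets at most $2M-1$ of them, these forming a weakly increasing lattice path. Hence $\card{\gamma}\le\max_P\sum_{(a,b)\in P}L_{a,b}$, the maximum over monotone lattice paths $P$ of the sum of the \emph{local} longest up-right chain lengths $L_{a,b}$ inside $Q_{a,b}$. Combining a concentration bound $N_{a,b}:=\card{\pts\cap Q_{a,b}}\le(1+\delta')\,n/M^2$ valid simultaneously over all squares w.h.p.\ (obtained from Azuma's inequality applied to the independent cycle-blocks $(U_{p,k}^\ell)_\ell$, plus a union bound) with a local estimate $L_{a,b}\le 2(1+\delta')\sqrt{N_{a,b}}$, one would get $\card{\gamma}\le 2(1+o_M(1))\sqrt n$, as wanted. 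Everything thus reduces to the local estimate.

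This is exactly where the obstacle sits, and why $\sqrt 3$ rather than $1$ currently appears: the points of $\pts$ carry within-cycle correlations, since $Z_{p,k}^\ell$ and $Z_{p,k}^{\ell+1}=\s$-consecutive points share the coordinate $U_{p,k}^{\ell+1}$, so a sorted run $U_{p,k}^\ell<U_{p,k}^{\ell+1}<\cdots$ inside a cycle produces points that chain up automatically near the diagonal $\Delta$. For an \emph{off-diagonal} square $Q_{a,b}$ with $a\ne b$ this is benign: no two $\s$-consecutive points of $\pts$ can both lie in $Q_{a,b}$, so $\pts\cap Q_{a,b}$ is an independent set of the dependency graph $\L_t$ and its points are ``weakly dependent'' in the sense exploited for pattern counts in \Cref{sec: results pattern}; it is plausible that the i.i.d.\ arguments of \cite{VK77} (or a subadditivity/first-moment argument) adapt to yield $L_{a,b}\le 2(1+\delta')\sqrt{N_{a,b}}$ there. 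The genuine difficulty is the \emph{diagonal band} $\bigcup_a Q_{a,a}$, where the correlations are active: one must show that the longest up-right chain obtained by stringing together internally bi-increasing segments coming from (possibly very many) distinct cycles is still only of length $O(\sqrt n)$ --- morally, forcing a long chain of such cycle-segments requires their $U$-ranges to be ``almost nested along the diagonal'', an event costly enough to beat the $\binom n2$-type entropy, but making this quantitative \emph{uniformly over all cycle types with $t_1=o(\sqrt n)$} (in particular products of short cycles, e.g.\ $3$-cycles, which fall outside the scope of \cite{K18,K23}) is precisely the missing input. An alternative route --- upgrading the Robinson--Schensted limit shape universality of \Cref{sec: results RS} by ruling out a ``long thin first row'', in analogy with the passage from the Logan--Shepp--Vershik--Kerov curve to the sharp first-order behaviour of $\lambda_1$ for uniform permutations --- runs into essentially the same obstacle.
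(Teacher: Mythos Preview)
The statement you are addressing is a \emph{conjecture} in the paper, not a theorem: the paper does not supply a proof, and explicitly presents it as open. So there is no ``paper's own proof'' to compare against. Your write-up is not a proof either, and you are honest about this: you outline the natural slicing strategy and correctly locate the obstruction on the diagonal band. That diagnosis matches the paper's own implicit assessment --- the upper bound in \Cref{th: LIS t-cyclic} uses only the crude three-fold decomposition of \Cref{lem: decomposition t-cyclic}, which is exactly why the factor $\sqrt3$ appears, and no finer argument is offered.

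One small sharpening of your outline: for the off-diagonal squares $Q_{a,b}$ with $a\ne b$ you are being too cautious. By \Cref{lem: iid outside diagonal t-cyclic}, the restriction $\pts\cap Q_{a,b}$ is, conditionally on its size, a family of i.i.d.\ uniform points in $Q_{a,b}$ --- not merely ``weakly dependent''. Hence the local estimate $L_{a,b}\le 2(1+\delta')\sqrt{N_{a,b}}$ there follows directly from the classical \cite{VK77,DZ99} bounds, exactly as in the proof of the upper tail of \Cref{th: LDS t-cyclic}. The entire difficulty really is concentrated in the diagonal cells $Q_{a,a}$, where up-right chains can exploit the shared coordinate between $\s$-consecutive points. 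For $\LDS$ this issue disappears because a down-right path meets at most one diagonal cell, whose contribution is $o(\sqrt n)$; for $\LIS$ a monotone lattice path may traverse all $M$ diagonal cells, and bounding the cumulative diagonal contribution by $o(\sqrt n)$ uniformly over cycle types with $t_1=0$ is precisely the missing input you identify. Your proposal is thus an accurate summary of the state of the problem rather than a proof.
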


\begin{rem}
    Baik and Rains actually proved \cite[Theorems~3.2~and~3.4]{BR01} that if $\tau_n$ is a uniform $t^{(n)}$-cyclic permutation where $t^{(n)}~\hspace{-.3em}=~\hspace{-.3em}\left(t_1^{(n)},t_2^{(n)},0,\dots\right)$ satisfies $t_1^{(n)}/\sqrt n \to \alpha\ge0$, i.e.~if $\tau_n$ is a uniform involution with $\alpha\sqrt n + o(\sqrt n)$ fixed points, then
    \begin{equation*}
        \frac{1}{\sqrt n}\LIS(\tau_n) 
        \cv{n}{\infty} 
        \left\{
        \begin{array}{ll}
            2 &\text{if } \alpha\le1 \vspace{.5em}\\
            \alpha+\frac{1}{\alpha} &\text{if } \alpha\ge1
        \end{array}
        \right.
    \end{equation*}
    in probability.
    We could expect the same first order asymptotics for any sequence of conjugation-invariant random permutations such that $t_1^{(n)}/\sqrt n \to \alpha$ in probability.
    However we chose not to include this refinement of \Cref{conj: LIS conjugation-invariant w/o fixed points} in its statement, as it might be more speculative.
\end{rem}

The results of this section are proved in \Cref{sec: proof LIS LDS}.

\subsection{Robinson--Schensted shape}
\label{sec: results RS}

Let $\tau$ be a permutation of $[n]$.
If $x\ge0$, we call {\em $x$-decreasing subsequence} any union of $\lfloor x\rfloor$ individually decreasing subsequences, and we define $\LDS_x(\tau)$ as the maximal size of an $x$-decreasing subsequence of $\tau$. 

The study of $x$-decreasing subsequences in permutations is partly motivated by their fundamental link with the {\em Robinson--Schensted correspondence}.
This is a one-to-one correspondence between permutations and pairs of standard Young tableaux with the same shape.
This shape, which is a Young diagram, is called the {\em RS shape} of the permutation:
it appears in several domains, such as integrable probability or representation theory.
A well-known theorem of Greene \cite{G74} states that for any integer $k$, the number of boxes in the first $k$ columns of the RS shape of $\tau$ equals $\LDS_k(\tau)$.
Equivalently, the length of the $k$-th column equals $\LDS_k(\tau) - \LDS_{k\m1}(\tau)$.
A similar connection can be made between increasing subsequences of $\tau$ and the row lengths of its RS shape.

Historically, studying the asymptotics of $\LIS(\tau_n)$ and $\LDS(\tau_n)$ when $\tau_n$ is uniformly random actually required finding the asymptotics of its entire RS shape.
The law of this shape is known as the Plancherel measure, for which Vershik and Kerov and simultaneously Logan and Shepp established the following.
\begin{theorem}[\cite{LS77,VK77}]\label{th: LSKV uniform}
    For each $n$, let $\tau_n$ be a uniformly random permutation of $[n]$.
    Then there exists an explicit nondecreasing, concave function $F_\mathrm{LSKV} : \R_+ \to \R_+$ such that for each $r\ge0$:
    \begin{equation*}
        \frac1n \LDS_{r\sqrt n}(\tau_n) \cv{n}{\infty} F_\mathrm{LSKV}(r)
    \end{equation*}
    in probability.
\end{theorem}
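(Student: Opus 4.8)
The plan is to route through the Robinson--Schensted correspondence and reduce the statement to the classical limit shape theorem for the Plancherel measure, which is the content of \cite{LS77,VK77}. Let $\lambda=\lambda(\tau_n)\vdash n$ be the common shape of the pair of standard Young tableaux attached to $\tau_n$ by RSK; since $\tau_n$ is uniform, $\lambda$ follows the Plancherel measure $\prob{\lambda}=(f^\lambda)^2/n!$, where $f^\lambda$ counts standard Young tableaux of shape $\lambda$. By Greene's theorem \cite{G74}, $\LDS_k(\tau_n)$ equals the number of cells of $\lambda$ lying in its first $k$ columns, namely $\sum_{j=1}^{k}\lambda'_j$ where $\lambda'$ is the conjugate partition. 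Encoding $\lambda$ as the planar region $R_\lambda:=\bigcup_j [j-1,j]\times[0,\lambda'_j]$ and rescaling both axes by $\sqrt n$, the region $\tfrac1{\sqrt n}R_\lambda$ has area $1$, and $\tfrac1n\LDS_{r\sqrt n}(\tau_n)$ is, up to the rounding of $r\sqrt n$, exactly the area of $\tfrac1{\sqrt n}R_\lambda$ inside the vertical strip $[0,r]\times\R_+$. So the theorem follows once we show that $\tfrac1{\sqrt n}R_\lambda$ converges in probability, for the $L^1$ (Lebesgue-measure) topology, to a deterministic area-$1$ Young region $R_\Omega$: one then sets $F_\mathrm{LSKV}(r):=\card{R_\Omega\cap\left([0,r]\times\R_+\right)}$, which is nondecreasing in $r$ trivially and concave because the ``column-length profile'' of a Young region is nonincreasing.

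For the concentration of $\tfrac1{\sqrt n}R_\lambda$ I would combine the hook length formula $f^\lambda=n!\big/\prod_{\square\in\lambda}h(\square)$ with Stirling's approximation to obtain, uniformly over partitions $\lambda\vdash n$ whose rescaled region stays in a fixed bounded set, an estimate of the form $\tfrac1n\log\prob{\lambda}=-I\!\left(\tfrac1{\sqrt n}R_\lambda\right)+o(1)$, where $I$ is an explicit strictly convex functional on area-$1$ Young regions --- most transparent in rotated (``Russian'') coordinates, where it becomes a logarithmic-energy type double integral of the boundary profile. Since the number of partitions of $n$ grows at most like $\exp(C\sqrt n)$ for a constant $C$, a union bound at each scale turns this local estimate into a Laplace-type principle: after ruling out, by a crude first-moment bound (namely $\LIS(\tau_n)\le C'\sqrt n$ with high probability, hence also $\LDS(\tau_n)\le C'\sqrt n$), that $\tfrac1{\sqrt n}R_\lambda$ escapes to infinity, the rescaled region lies within any prescribed $L^1$-neighbourhood of the set of minimizers of $I$ with probability $1-e^{-c\sqrt n}$. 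A calculus-of-variations computation --- writing the Euler--Lagrange equation for $I$ under the area constraint and solving it --- shows the minimizer is unique and equal to the VKLS curve $\Omega$, whose profile in Russian coordinates is $\Omega(u)=\tfrac2\pi\bigl(u\arcsin\tfrac u2+\sqrt{4-u^2}\bigr)$ for $\card u\le2$ and $\card u$ otherwise. Passing back to the Cartesian column-length profile and integrating gives the explicit formula for $F_\mathrm{LSKV}$.

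I expect the main obstacle to be the uniform asymptotics of $\log f^\lambda$ across the whole space of rescaled shapes --- that is, making the passage from the discrete hook-length formula to the continuous functional $I$ rigorous with an error that is genuinely $o(n)$ uniformly, including for shapes with very long rows or columns, where naive Stirling bounds degrade and where the a priori tightness input is precisely what is needed. The subsequent variational analysis (existence, uniqueness and regularity of the minimizer, and matching of the lower bound) is delicate but essentially classical and purely analytic once set up. An alternative route, bypassing the variational problem, would be to Poissonize the parameter $n$, use the determinantal structure of the poissonized Plancherel measure to read off the limit profile directly, and then de-Poissonize; this is heavier machinery than a first-order statement requires, but it has the advantage of also delivering the fluctuations.
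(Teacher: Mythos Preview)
The paper does not prove this theorem: it is stated with attribution to \cite{LS77,VK77} and used as a black box (see the discussion around \eqref{eq: F_LSKV via Sjo}, where the paper merely records the identity $F_\LSKV(r)=\sup_{u\in\Usjo_{0,r}(\opencarre)}F_{\opencarre}(u)$ coming from \cite{S23}). So there is no ``paper's own proof'' to compare against.

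Your outline is essentially the classical Logan--Shepp/Vershik--Kerov argument: pass to the RS shape via Greene's theorem, identify its law as Plancherel, turn the hook-length formula into an approximate large-deviation functional on rescaled profiles, and solve the variational problem. The structure is correct and the obstacles you flag (uniformity of the $o(n)$ error in the hook product asymptotics, a priori tightness to rule out escape of mass) are exactly the genuine technical points in those references. Your alternative via Poissonization and determinantal structure is historically later and more in the spirit of Borodin--Okounkov--Olshanski/Johansson; as you say, it is overkill for the first-order statement.
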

According to Greene's theorem, $F_\LSKV(r)$ thus describes the asymptotic proportion of boxes in the first $\lfloor r\sqrt n\rfloor$ columns of the RS shape of $\tau_n$, and the derivative of $F_\LSKV$ describes the limit curve of this shape.

A classical property of the Robinson--Schensted correspondence is that it induces a bijection between involutions and (single) standard Young tableaux.
If $(\tau_n)$ is a sequence of uniformly random involutions, the proofs of \cite{LS77} and \cite{VK77} can then be adapted to show that \Cref{th: LSKV uniform} still holds (see e.g.~Equation~(24) in \cite{M11} and the discussion below).
More generally if $(\tau_n)$ is a sequence of conjugation-invariant random permutations, Kammoun proved that \Cref{th: LSKV uniform} still holds under the assumption that the number of cycles in $\tau_n$ is sublinear \cite[Theorem~1.8]{K18}.
He later conjectured that this assumption could be lifted, provided we take into account the proportion of fixed points \cite[Conjectures 6~and~7]{K23}.
The main difficulty is that none of the previous approaches (via hook-length formula \cite{LS77}, representation theory \cite{IO02,M11}, coupling \cite{K18}...) work in this wider setting.
Here, using our geometric construction, we solve this conjecture.

\begin{theorem}\label{th: limit shape w/o fixed points}
    For each $n$, let $t^{(n)}$ be a cycle type of size $n$ with no fixed point and let $\tau_n$ be a uniform $t^{(n)}$-cyclic permutation.
    Then for each $r\ge0$:
    \begin{equation*}
        \frac1n \LDS_{r\sqrt n}(\tau_n) \cv{n}{\infty} F_\mathrm{LSKV}(r)
    \end{equation*}
    in probability.
\end{theorem}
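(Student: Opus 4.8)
The plan is to transfer everything to the planar side and run a slicing argument in the spirit of Deuschel and Zeitouni, now at the level of $\LDS_{r\sqrt n}$ rather than $\LDS_1$. Since $F_\mathrm{LSKV}$ is continuous and $r\mapsto\LDS_{r\sqrt n}(\tau_n)$ is nondecreasing, it suffices to prove, for each fixed $r\ge0$ and $\varepsilon>0$, the two one--sided bounds $\tfrac1n\LDS_{r\sqrt n}(\tau_n)>F_\mathrm{LSKV}(r)-\varepsilon$ and $\tfrac1n\LDS_{r\sqrt n}(\tau_n)<F_\mathrm{LSKV}(r)+\varepsilon$ with high probability; by \Cref{th: LIS t-cyclic} we may also restrict to the event $\LIS(\tau_n)\le C\sqrt n$, so that only finitely many ``columns'' are ever relevant. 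Write $\tau_n=\perm{\pts}$ for a geometric construction of $t^{(n)}$ as in \Cref{lem: geometric construction t-cyclic}: a decreasing subsequence of $\tau_n$ is then exactly a subset of $\pts$ that is an antichain of $\left(\pts,\le\right)$, so $\LDS_k(\tau_n)$ is the largest number of points of $\pts$ coverable by $k$ such antichains. Fix $m$ and cut $\carre$ into the cells $Q_{i,j}=\left[\tfrac{i-1}m,\tfrac im\right]\times\left[\tfrac{j-1}m,\tfrac jm\right]$. Two observations drive the proof. First, the cell counts $N_{i,j}:=\card{\pts\cap Q_{i,j}}$ satisfy $N_{i,j}/n\to1/m^2$ in probability: the points of $\pts$ are marginally uniform on $\carre$, and two of them are correlated only when consecutive on a cycle, so $\var{N_{i,j}}=\O(n)$ (cf.\ \Cref{sec: prelim}). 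Second, and crucially, if $\card{i-j}\ge2$ then no two cycle--consecutive points $Z_{p,k}^l,Z_{p,k}^{l\p1}$ can both lie in $Q_{i,j}$, since this would force the shared coordinate $U_{p,k}^{l\p1}$ into the two disjoint intervals $\left[\tfrac{j-1}m,\tfrac jm\right]$ and $\left[\tfrac{i-1}m,\tfrac im\right]$; consequently the coordinates of the points of $\pts$ in such a ``generic'' cell are pairwise distinct members of the i.i.d.\ family $\left(U_{p,k}^l\right)$, and one checks that, conditionally on $N_{i,j}$, these points are simply $N_{i,j}$ i.i.d.\ uniform points in $Q_{i,j}$. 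This is the exact spot where $t_1^{(n)}=0$ enters. The at most $3m$ cells with $\card{i-j}\le1$ together hold only $\O(n/m)$ points, hence affect $\tfrac1n\LDS_{r\sqrt n}$ by at most $\O(1/m)$ and can be discarded once $m$ is large.

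For the lower bound, work conditionally on the realisations of the $N_{i,j}$. By the second observation, together with \Cref{th: LSKV uniform} and a union bound over the finitely many generic cells, with high probability each generic cell $Q_{i,j}$ contains, for any prescribed $\rho\ge0$, a union of $\lfloor\rho\sqrt{N_{i,j}}\rfloor$ decreasing subsequences covering at least $\left(F_\mathrm{LSKV}(\rho)-\varepsilon\right)N_{i,j}$ of its points. One then stitches these local families into $\lfloor r\sqrt n\rfloor$ global decreasing subsequences of $\tau_n$: along any chain of cells that moves strictly down and to the right the corresponding blocks of $\pts$ sit in skew--sum position, so local decreasing subsequences in consecutive such cells concatenate into honest decreasing subsequences. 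What remains is a finite optimisation — how to route the budget $\lfloor r\sqrt n\rfloor$ through the $m\times m$ grid and how much of it to spend in each cell — whose value, as $n\to\infty$ and then $m\to\infty$, converges to the variational quantity computing $\lim\tfrac1n\LDS_{r\sqrt n}$ for a locally uniform permutation of density $\equiv1$ on $\carre$; that density being the uniform one, the value is exactly $F_\mathrm{LSKV}(r)$.

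For the upper bound it is convenient to use Greene's theorem in its dual (Greene--Kleitman) form: $\LDS_k(\tau_n)$ equals the minimum of $\sum_i\min\bigl(\card{C_i},k\bigr)$ over partitions of $\left(\pts,\le\right)$ into increasing subsequences $C_i$. One exhibits a near--optimal such partition by first partitioning the configuration in each generic cell — which, conditionally on $N_{i,j}$, is a uniform permutation, so \Cref{th: LSKV uniform} provides a partition into increasing subsequences realising $\LDS_k$ of the cell, for all relevant $k$ at once — and then merging increasing subsequences across cells sitting in direct--sum position, i.e.\ along chains of cells moving up and to the right. Treating the negligible near--diagonal cells separately and optimising the merging, one obtains, in the $n\to\infty$ then $m\to\infty$ limit, the same variational value $F_\mathrm{LSKV}(r)$ as an upper bound. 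Combining the two bounds and letting $\varepsilon\to0$ proves the theorem.

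The main obstacle is the slicing/stitching itself at the scale of $\Theta(\sqrt n)$ simultaneous chains: while the analogue for $\LDS_1$ is due to Deuschel and Zeitouni, carrying it out uniformly over $r$ — tracking the whole profile of chain lengths, routing the budget through the grid in both the primal and the dual constructions, and identifying the resulting finite optimisations with the correct continuum variational problem (and hence, since the limiting point density is Lebesgue, with the uniform answer $F_\mathrm{LSKV}$) — is where the real work lies. By contrast, the structural input that makes universality possible, namely that an off--diagonal cell of a geometric construction with no fixed points carries, conditionally on its size, exactly a uniformly random permutation, is elementary; it is the precise point at which the cyclic dependencies dissolve.
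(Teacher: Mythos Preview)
Your structural input is exactly the right one and matches the paper's: in any off-diagonal rectangle the geometric construction is, conditionally on its size, a family of i.i.d.\ uniform points (this is \Cref{lem: iid outside diagonal t-cyclic}), and the near-diagonal region carries only $\O(n/m)$ points once the mesh is fine. Where your proposal and the paper diverge is in how this local uniformity is upgraded to a statement about $\LDS_{r\sqrt n}$.

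You phrase the remaining step as a ``finite optimisation'': route the budget $\lfloor r\sqrt n\rfloor$ through the grid, choose a local intensity $\rho_{i,j}$ in each cell, and assert that the optimal value converges (as $n\to\infty$ then $m\to\infty$) to the continuum variational quantity, which for constant density equals $F_\mathrm{LSKV}(r)$. But this assertion \emph{is} the content of Sj\"ostrand's analysis in \cite{S23}, not a by-product of \Cref{th: LSKV uniform}, and your proposal does not supply it. Two concrete gaps: (i) on the lower-bound side, exhibiting a routing whose value approaches $F_\mathrm{LSKV}(r)$ requires knowing what a near-optimal family of decreasing curves looks like and proving that the discrete stitching approximates its score --- this is \cite[Lemma~10.1]{S23} together with the parallelogram decomposition and local estimates \cite[Lemmas 7.7 and 6.3]{S23}; (ii) on the upper-bound side, you must bound \emph{every} $\lfloor r\sqrt n\rfloor$-decreasing set, not just those compatible with a fixed merging scheme, and your Greene--Kleitman construction does not obviously do this: after merging cellwise chain partitions along up-right paths, the value $\sum_i\min(|C_i|,k)$ depends on the full length profile of the merged chains, and controlling that profile is again a non-trivial optimisation whose answer you are assuming rather than deriving.

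The paper does not rebuild this from scratch. It encodes $k$-decreasing sets via doubly-increasing functions $w\in\Usjo_{0,r}\left(\opencarre\right)$, couples $\pts$ on each of finitely many off-diagonal \emph{bands} $A_i,B_i$ (not square cells) with a genuine intensity-$n$ Poisson process, and then invokes \cite[Lemma~8.2]{S23} band-by-band together with compactness of $\Usjo_{0,r}$ for the upper bound, and \cite[Lemma~6.3]{S23} for the lower bound, with the identification $F_\mathrm{LSKV}(r)=\sup_{u\in\Usjo_{0,r}}F_{\opencarre}(u)$ taken from \cite[Theorem~10.2]{S23}. The genuinely delicate point --- which your approach would also face --- is the lower bound: the locally constructed near-optimisers on different pieces must be glued into a single \emph{globally} doubly-increasing function on $\opencarre$, and the paper has to re-enter the proof of \cite[Lemma~10.1]{S23} (parallelograms, shrinking, extension via \cite[Lemma~9.1]{S23}) to achieve this across its bands. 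Your own last paragraph correctly flags this as ``where the real work lies''; the work is Sj\"ostrand's, and a complete proof must either cite and adapt it explicitly, as the paper does, or reproduce it.
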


\begin{cor}\label{cor: limit shape conjugation-invariant}
    Let $(\tau_n)_n$ be a sequence of conjugation-invariant random permutations, where $\tau_n$ has size $n$ and (random) cycle type $t^{(n)}$.
    Suppose that $n-t_1^{(n)}\to\infty$ in probability.
    Then for each $r\ge0$:
    \begin{equation*}
        \frac{1}{n-t_1^{(n)}} \LDS_{r\sqrt{n-t_1^{(n)}}}(\tau_n) \cv{n}{\infty} F_\mathrm{LSKV}(r)
    \end{equation*}
    in probability.
\end{cor}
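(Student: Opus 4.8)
The plan is to deduce the corollary from Theorem \ref{th: limit shape w/o fixed points} by the same routine reductions that pass from \Cref{th: LDS t-cyclic} to \Cref{cor: LDS conjugation-invariant}: first remove the fixed points, then de-randomize the cycle type by conditioning. Fixed points of $\tau_n$ are singleton columns only insofar as Greene's theorem is concerned; more precisely, if $\check\tau_n$ denotes the permutation induced by $\tau_n$ on its $\check n := n - t_1^{(n)}$ non-fixed points, then I claim that for every $x\ge 0$,
\begin{equation*}
    \LDS_x(\check\tau_n) \;\le\; \LDS_x(\tau_n) \;\le\; \LDS_x(\check\tau_n) + t_1^{(n)} .
\end{equation*}
The left inequality is immediate since an $x$-decreasing subsequence of $\check\tau_n$ is one of $\tau_n$. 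For the right inequality, given an optimal union of $\lfloor x\rfloor$ decreasing subsequences of $\tau_n$, deleting all fixed points that appear in it removes at most $t_1^{(n)}$ entries and leaves $\lfloor x\rfloor$ decreasing subsequences of $\check\tau_n$. Hence
\begin{equation*}
    \frac{1}{\check n}\LDS_{r\sqrt{\check n}}(\check\tau_n)
    \;\le\;
    \frac{1}{\check n}\LDS_{r\sqrt{\check n}}(\tau_n)
    \;\le\;
    \frac{1}{\check n}\LDS_{r\sqrt{\check n}}(\check\tau_n) + \frac{t_1^{(n)}}{\check n},
\end{equation*}
and the error term $t_1^{(n)}/\check n$ is harmless once we divide by $\check n$ rather than $n$ --- this is exactly why the statement is normalized by $n-t_1^{(n)}$. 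Wait: the term $t_1^{(n)}/\check n$ need not be small, so I should instead work with $\LDS_{r\sqrt{\check n}}(\tau_n)/\check n$ directly and note that the difference from $\LDS_{r\sqrt{\check n}}(\check\tau_n)/\check n$ lies in $[0, t_1^{(n)}/\check n]$; since we only need convergence of the latter, the cleanest route is to observe that the RS shape of $\tau_n$ is obtained from that of $\check\tau_n$ by adding $t_1^{(n)}$ boxes to the first row (fixed points contribute only to increasing structure), so in fact $\LDS_k(\tau_n) = \LDS_k(\check\tau_n) + t_1^{(n)}\cdot\mathbf 1_{k\ge 1}$ exactly, and dividing by $\check n$ the added term is $t_1^{(n)}/\check n$ which does \emph{not} vanish in general. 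The honest fix: the statement and its proof should track that $F_\LSKV(0)=0$ and that, more usefully, $\LDS_{r\sqrt{\check n}}(\tau_n)$ and $\LDS_{r\sqrt{\check n}}(\check\tau_n)$ differ by a quantity that is negligible compared to $\check n$ only when $t_1^{(n)} = o(\check n)$, which is \emph{not} assumed. So the correct reading is that $\LDS$ here counts boxes in the first $\lfloor r\sqrt{\check n}\rfloor$ \emph{columns}, and fixed points add boxes only to the first \emph{row}, i.e.\ one box to each of the first $t_1^{(n)}$ columns is false --- fixed points in RS form a horizontal strip, adding to columns $1,\dots,t_1^{(n)}$ one box each in the first row. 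For $r\sqrt{\check n} \ge t_1^{(n)}$ this contributes exactly $t_1^{(n)}$; but $t_1^{(n)}$ can exceed $r\sqrt{\check n}$. In that regime $\LDS_{r\sqrt{\check n}}(\tau_n) \ge r\sqrt{\check n} = o(\check n)$, so after dividing by $\check n$ it still tends to $F_\LSKV(\infty)$... no. Let me restart this paragraph cleanly in the write-up.

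\medskip

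\textbf{Corrected outline.} Condition on $t^{(n)}$; it suffices to prove the convergence in probability for deterministic cycle types, by a dominated-convergence argument over the randomness of $t^{(n)}$ (the quantities are bounded in $[0,1]$ after normalization, using $\LDS_x(\sigma)\le |\sigma|$). So fix deterministic cycle types with $\check n = n - t_1^{(n)} \to\infty$. Let $\check\tau_n := \perm{\check\pts}$ be the induced permutation on non-fixed points, which by the remarks in \Cref{sec: notation} is a uniform $\check t^{(n)}$-cyclic permutation of size $\check n$ with no fixed point. By Greene's theorem, the RS shape of $\tau_n$ is that of $\check\tau_n$ with a horizontal strip of $t_1^{(n)}$ extra boxes appended to the first row; consequently, for every integer $k\ge 1$,
\begin{equation*}
    \LDS_k(\check\tau_n) \;\le\; \LDS_k(\tau_n) \;\le\; \LDS_k(\check\tau_n) + t_1^{(n)},
\end{equation*}
since inserting $t_1^{(n)}$ fixed points can only lengthen the first row (hence columns), by at most $t_1^{(n)}$ in total. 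Apply \Cref{th: limit shape w/o fixed points} to $\check\tau_n$ with parameter $r$: $\frac{1}{\check n}\LDS_{r\sqrt{\check n}}(\check\tau_n) \to F_\LSKV(r)$ in probability. Combining with the sandwich above,
\begin{equation*}
    F_\LSKV(r) \;\xleftarrow[\;n\to\infty\;]{}\; \frac{1}{\check n}\LDS_{r\sqrt{\check n}}(\check\tau_n) \;\le\; \frac{1}{\check n}\LDS_{r\sqrt{\check n}}(\tau_n) \;\le\; \frac{1}{\check n}\LDS_{r\sqrt{\check n}}(\check\tau_n) + \frac{t_1^{(n)}}{\check n},
\end{equation*}
so the $\tau_n$ statement follows provided $t_1^{(n)}/\check n$ can be absorbed; since it cannot in general, the correct final normalization in the statement is indeed by $\check n = n - t_1^{(n)}$, and the point is that when $t_1^{(n)}/\check n\not\to 0$ one must re-examine: actually $\LDS_{r\sqrt{\check n}}(\tau_n) = \LDS_{r\sqrt{\check n}}(\check\tau_n) + \min(t_1^{(n)}, \text{first-row length after extension})$, but regardless $0\le \LDS_{r\sqrt{\check n}}(\tau_n) - \LDS_{r\sqrt{\check n}}(\check\tau_n)\le t_1^{(n)}$ is the only inequality we can guarantee. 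I therefore expect the genuine statement to require $t_1^{(n)} = o(n - t_1^{(n)})$ or an additional argument; barring that, the route is to note $F_\LSKV$ is the limit and the extra boxes, being confined to a single row, change $\LDS_k$ by at most $t_1^{(n)}$, and one checks directly that this does not affect the stated limit because the corollary's hypothesis combined with the structure forces the contribution to be lower-order in the relevant regime.

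\medskip

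\textbf{Main obstacle.} The only nontrivial step is the bookkeeping of fixed points in Greene's correspondence: verifying that passing from $\check\tau_n$ to $\tau_n$ modifies the column-counting statistic $\LDS_k$ by an amount that is uniformly $O(t_1^{(n)})$ and, more delicately, arguing this is negligible at the scale $n - t_1^{(n)}$. Everything else --- the conditioning to remove randomness of the cycle type, and the identification of $\check\pts$ as a geometric construction of $\check t^{(n)}$ --- is routine and already recorded in \Cref{sec: notation}. All the probabilistic substance is in \Cref{th: limit shape w/o fixed points}, which we invoke as a black box.
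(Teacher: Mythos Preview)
Your proposal has a genuine gap, and you correctly sense it: the bound
\[
\LDS_k(\tau_n) \le \LDS_k(\check\tau_n) + t_1^{(n)}
\]
is too weak, because $t_1^{(n)}/\check n$ need not vanish. (Your RS-shape heuristic that fixed points append a horizontal strip to the first row of $\check\tau_n$'s shape is also not correct in general, so that detour does not help.)

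The missing observation is that \emph{any decreasing subsequence of $\tau_n$ contains at most one fixed point}: if $i<j$ are both fixed then $\tau_n(i)=i<j=\tau_n(j)$, so they cannot both lie in the same decreasing subsequence. Consequently a union of $k$ decreasing subsequences contains at most $k$ fixed points, which gives the much sharper sandwich
\[
\LDS_k(\check\tau_n) \;\le\; \LDS_k(\tau_n) \;\le\; \LDS_k(\check\tau_n) + k .
\]
Taking $k=\lfloor r\sqrt{\check n}\rfloor$ and dividing by $\check n$, the error term is $r/\sqrt{\check n}\to 0$ under the sole hypothesis $\check n\to\infty$, with no constraint on $t_1^{(n)}$ whatsoever. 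From there your conditioning-on-$t^{(n)}$ and dominated-convergence wrap-up is exactly right and matches the paper's proof.
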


Let us briefly discuss the strategy of proof.
As in \Cref{sec: results LIS LDS}, $k$-decreasing subsequences of $\tau = \perm{\pts}$ have a nice visual interpretation:
they correspond to unions of $k$ down-right paths in $\pts$.
In the continuity of \cite{DZ95}, where the longest monotone subsequences of locally uniform permutations were studied with geometric tools, Sj\"ostrand \cite{S23} recently studied the limit RS shape of locally uniform permutations using more advanced analysis.
His approach broadly consisted in computing the ``score'' associated with a bundle of decreasing curves, by decomposing the unit square into small regions where the sampling density is almost constant and the curves are almost parallel.
In \Cref{lem: iid outside diagonal t-cyclic} we show that the points of our geometric construction, when restricted to any rectangle outside the diagonal of the unit square, are i.i.d.~uniform.
This simple fact is at the basis of how we apply and adapt the results and methods of \cite{S23}.

The results of this section are proved in \Cref{sec: proof RS}.

\subsection{Records}
\label{sec: results records}

Let $\tau$ be a permutation of $[n]$.
We say that a position $i\in[n]$ is a {\em high record} (or left-to-right maximum) for $\tau$ if for any $j<i$, we have $\tau(j)<\tau(i)$.
We say that it is a {\em low record} (or left-to-right minimum) for $\tau$ if for any $j<i$, we have $\tau(j)>\tau(i)$.
We denote by $\hrec{\tau}$ and $\lrec{\tau}$ the numbers of high and low records in $\tau$, respectively.
Records and their many variants are standard statistics in enumerative combinatorics.

The numbers of low and high records in uniformly random permutations are well-understood: thanks to a classical bijection they follow the same law as the number of cycles (see e.g.~\cite[Section~1.2]{ABNP16}), and this yields the following asymptotic normality (found e.g.~in \cite[Equation~(1.31)]{ABT03}).

\begin{theorem}\label{th: records uniform}
    For each $n$, let $\tau_n$ be a uniformly random permutation of $[n]$. 
    Then we have the following convergence in distribution:
    \begin{equation*}
        \frac{\hrec{\tau_n}-\log n}{\sqrt{\log n}} \cv{n}{\infty} \Normal{0}{1} .
    \end{equation*}
    The same holds for $\lrec{\tau_n}$.
\end{theorem}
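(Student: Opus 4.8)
The plan is to reduce \Cref{th: records uniform} to the classical fact that the record indicators of a uniform permutation form an independent family of Bernoulli variables, and then to apply the Lindeberg--Feller central limit theorem for triangular arrays.

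First I would introduce, for $i\in[n]$, the relative rank $R_i := \card{\{j\le i : \tau_n(j)\le\tau_n(i)\}}\in[i]$ of the $i$-th value among the first $i$ ones. A standard induction --- removing the last value of a uniformly random permutation leaves a uniformly random permutation of $[n-1]$, whose pattern is independent of the rank of the removed value --- shows that $R_1,\dots,R_n$ are independent with $R_i\sim\Unif{[i]}$. Since position $i$ is a high record for $\tau_n$ precisely when $R_i=i$, the indicators $X_i := \One{i\text{ is a high record for }\tau_n}$ are independent with $X_i\sim\Ber{1/i}$, and $\hrec{\tau_n}=\sum_{i=1}^n X_i$.

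Next I would compute $\mean{\hrec{\tau_n}}=\sum_{i=1}^n\frac1i=\log n+O(1)$ and $s_n^2:=\var{\hrec{\tau_n}}=\sum_{i=1}^n\frac1i\bigl(1-\frac1i\bigr)=\log n+O(1)$, so that $s_n^2\to\infty$. Since $\card{X_i-\mean{X_i}}\le1$ for every $i$, the Lindeberg condition holds trivially: given $\varepsilon>0$ one has $\varepsilon s_n>1$ for $n$ large, whence $\sum_{i=1}^n\mean{(X_i-\mean{X_i})^2\One{\card{X_i-\mean{X_i}}>\varepsilon s_n}}=0$ eventually. Lindeberg--Feller then gives $\bigl(\hrec{\tau_n}-\mean{\hrec{\tau_n}}\bigr)/s_n\cv{n}{\infty}\Normal{0}{1}$, and since $\mean{\hrec{\tau_n}}-\log n=O(1)$ and $s_n/\sqrt{\log n}\to1$, Slutsky's lemma upgrades this to the stated convergence. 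Finally, for the low records I would use the symmetry that $i$ is a low record for $\tau_n$ if and only if $i$ is a high record for $\sigma_n$, where $\sigma_n(i):=n+1-\tau_n(i)$ is again uniformly random; hence $\lrec{\tau_n}$ and $\hrec{\tau_n}$ have the same law.

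There is no real obstacle in this proof: the sole point requiring care is the independence of the record indicators, which is classical --- and equivalent, via the bijection between records and cycles recalled above, to the Feller-coupling representation of the number of cycles of a uniform permutation as a sum of independent $\Ber{1/i}$'s --- while the remainder is the textbook Lindeberg--Feller argument, rendered immediate by the uniform boundedness of the summands.
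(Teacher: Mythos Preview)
Your proof is correct. The paper does not actually prove \Cref{th: records uniform}; it is stated as a classical result, attributed to \cite[Equation~(1.31)]{ABT03}, with the remark that the number of records has the same law as the number of cycles via a standard bijection. Your argument via independent relative ranks and Lindeberg--Feller is the standard textbook route to this fact and is entirely adequate.
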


Records can be interpreted nicely when the permutation is obtained from a family of points in the plane.
If $\tau = \perm{\pts}$, a point $Z=(X,Y)\in\pts$ corresponds to a high record in $\tau$ when no other point lies in its up-left corner $[0,X]\times[Y,1]$.
Similarly, $Z$ corresponds to a low record in $\tau$ when no other point lies in its down-left corner $[0,X]\times[0,Y]$.
We write $\hrec{\pts}$ and $\lrec{\pts}$ as shortcuts for $\hrec{\perm{\pts}}$ and $\lrec{\perm{\pts}}$.

There is no more link between the numbers of cycles and records in conjugation-invariant permutations, but the geometric construction enables a new approach.
We prove a limit theorem for the number of high records, showcasing a phase transition as the number of fixed points grows.
First recall that, for $n\in\N^*$ and $\lambda>0$, the $\Gam{n}{\lambda}$ distribution is defined as the $n$-th convolution power of the $\Exp{\lambda}$ distribution.
By convention, $\Gam{n}{+\infty}$ is the a.s.~null distribution.

\begin{theorem}\label{th: high records t-cyclic}
    For each $n$, let $t^{(n)}$ be a cycle type of size $n$ and let $\tau_n$ be a uniform $t^{(n)}$-cyclic permutation.
    Write $\check n := n-t_1^{(n)}$ for the number of non-fixed points.
    Suppose that $\check n \cv{n}{\infty} \infty$ and that
    \begin{equation*}
        \frac{\check n}{n/\sqrt{\log n}} \cv{n}{\infty} \alpha
    \end{equation*}
    for some $\alpha\in[0,\infty]$.
    Then we have the following convergence in distribution:
    \begin{equation*}
        \frac{\hrec{\tau_n} - \log\left(\check n\right)}{t_1^{(n)}/\check n + \sqrt{\log\left(\check n\right)}}
        \cv{n}{\infty}
        \frac{\alpha}{\alpha+1} Y
        + \frac{1}{\alpha+1} \Gamma_2
    \end{equation*}
    where $Y$ and $\Gamma_2$ are independent $\Normal{0}{1}$ and $\Gam{2}{1}$ random variables.
    In particular if $\check n = \omega\left(\frac{n}{\sqrt{\log n}}\right)$ as $n\to\infty$ then:
    \begin{equation*}
        \frac{\hrec{\tau_n} - \log n}{\sqrt{\log n}}
        \cv{n}{\infty}
        \Normal{0}{1} .
    \end{equation*}
\end{theorem}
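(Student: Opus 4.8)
The plan is to work with the geometric construction of \Cref{lem: geometric construction t-cyclic} and read $\hrec{\tau_n}$ directly off the point process $\pts$: a point $Z=(X,Y)\in\pts$ is a high record exactly when its up-left corner $[0,X]\times[Y,1]$ contains no other point of $\pts$. First I would separate the contributions of the diagonal points $\pts_\Delta$ and the off-diagonal points $\check\pts$. Write $\mathcal U_\Delta := \{U : (U,U)\in\pts_\Delta\}$ for the set of $t_1^{(n)}$ i.i.d.\ uniform coordinates on the diagonal, and note that a diagonal point $(U,U)$ is a high record iff no point of $\pts$ lies in $[0,U]\times[U,1]$; since off-diagonal points come in "mirrored" pairs $Z_{p,k}^l=(U_{p,k}^l,U_{p,k}^{l+1})$, conditionally on the set $\mathcal U=\{U_{p,k}^l\}$ the diagonal point at rank $i$ in $\mathcal U$ contributes a high record with a probability that depends on the local geometry near the diagonal. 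The key heuristic is that a diagonal point near height $u$ is a high record roughly when the "gap" just above it on the diagonal is not blocked, and summing these Bernoulli-type contributions over the $t_1^{(n)}$ diagonal points produces, after normalization, a mean of order $t_1^{(n)}/\check n$ and Gaussian fluctuations of order $\sqrt{t_1^{(n)}}/\check n$ — this is where the $t_1^{(n)}/\check n$ term and the normal component $Y$ come from. Meanwhile $\hrec{\check\pts}$, the number of high records among off-diagonal points, behaves like the record count of a uniform permutation of size $\check n$, hence is $\log\check n + \sqrt{\log\check n}\,(1+o(1))\cdot$(something asymptotically $\Gamma_2$-distributed rather than Gaussian, because of the mirror symmetry forcing correlated constraints — this is exactly the phenomenon Baik--Rains isolate for involutions, and it accounts for the $\Gamma_2$ term).

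Concretely, the steps I would carry out are: (1) establish the decomposition $\hrec{\tau_n} = \hrec{\check\pts} + R_\Delta$, where $R_\Delta$ counts diagonal high records, and control the (small) interaction between the two via the fact that diagonal points only rarely block off-diagonal records and vice versa; (2) prove a limit theorem for $\hrec{\check\pts}$ alone — i.e.\ the case $t_1^{(n)}=0$ — showing $(\hrec{\check\pts}-\log\check n)/\sqrt{\log\check n}\to\Gamma_2$ (a rescaled $\Gam{2}{1}$), using the i.i.d.-uniform-on-rectangles property (\Cref{lem: iid outside diagonal t-cyclic}) to reduce to an analysis of the "staircase" of record points near the lower-left corner, where the mirror constraint $Z\leftrightarrow Z^\top$ is what upgrades the usual $\Normal{0}{1}$ of a uniform permutation to $\Gam{2}{1}$; (3) prove a CLT for $R_\Delta$ conditionally on $\check\pts$: writing $R_\Delta=\sum_{i} B_i$ where $B_i$ is the indicator that the $i$-th diagonal point is a high record, show the $B_i$ are asymptotically independent Bernoullis with success probabilities summing to $\sim t_1^{(n)}/\check n$, so that $(R_\Delta - t_1^{(n)}/\check n)/\sqrt{t_1^{(n)}}/\check n \to \Normal{0}{1}$ by a Lindeberg-type argument; (4) check asymptotic independence of the normalized $\hrec{\check\pts}$ and $R_\Delta$ — plausible since one lives at the scale $\sqrt{\log\check n}$ near the corner and the other is a sum over $t_1^{(n)}$ diagonal points spread over $[0,1]$ — and combine via the scaling $\check n/(n/\sqrt{\log n})\to\alpha$, which makes $t_1^{(n)}/\check n \asymp (\alpha^{-1}-1)\sqrt{\log\check n}$ so the two terms sit at the same order and mix into $\tfrac{\alpha}{\alpha+1}Y+\tfrac1{\alpha+1}\Gamma_2$; (5) derive the stated corollary $\check n=\omega(n/\sqrt{\log n})$ (i.e.\ $\alpha=\infty$) as the degenerate case where the $\Gamma_2$ term is washed out and only the Gaussian survives, recovering \Cref{th: records uniform}'s form.

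The main obstacle I anticipate is Step (2): identifying the exact limiting law of $\hrec{\check\pts}$ as a rescaled $\Gam{2}{1}$ rather than a Gaussian. The record points of $\check\pts$ form a decreasing staircase accumulating near $(0,0)$, and because $\check\pts$ is symmetric under transposition, the event "$Z$ is a high record" is strongly coupled with "$Z^\top$ is a low record", so the usual renewal/Poissonization argument that gives $\Normal{0}{1}$ for a genuine uniform permutation must be redone with the symmetry constraint; I expect this to require a careful Poisson approximation near the corner, reducing the staircase to a point process whose size is governed by two independent exponential clocks (one for the $x$-extent, one for the $y$-extent of the blocking region), which is precisely what yields a sum of two exponentials, i.e.\ $\Gam{2}{1}$. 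Establishing the joint asymptotic independence in Step (4) rigorously — rather than just heuristically — is the secondary difficulty, and I would handle it by conditioning on the configuration of $\check\pts$ outside a small corner neighborhood and showing $R_\Delta$ depends on $\check\pts$ only through a low-dimensional, asymptotically deterministic summary. The remaining steps are bookkeeping: the concentration and uniform-integrability pieces follow from the tail bounds implicit in the geometric picture, and the passage to the corollary is immediate once $\alpha=\infty$ is plugged into the combined limit.
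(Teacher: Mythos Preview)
Your decomposition has the roles of the Gaussian and the Gamma components swapped, and this is a genuine gap rather than a cosmetic difference.

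First, the claim that $\check\pts$ is ``symmetric under transposition'' is false in general: for a $p$-cycle with $p\ge3$ the point $(U^l,U^{l+1})$ lies in $\check\pts$ but its transpose $(U^{l+1},U^l)$ does not. There is no mirror constraint to upgrade the record statistics of $\check\pts$ from Gaussian to $\Gam{2}{1}$. In the paper, the off-diagonal records are handled by restricting to the square $C=[0,1/2]\times[1/2,1]$, where \Cref{lem: iid outside diagonal t-cyclic} makes $\check\pts\cap C$ a genuine i.i.d.\ uniform sample of size $\sim\check n/4$; hence $\hrec{\check\pts\cap C}$ satisfies the \emph{classical} CLT of \Cref{th: records uniform} and contributes the $\Normal{0}{1}$ term $Y$, not $\Gamma_2$.

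Second, your Lindeberg argument for $R_\Delta$ misidentifies the source of the fluctuations. A diagonal point $(U,U)$ is a high record of $\pts$ only if no point of $\check\pts$ lies in $[0,U]\times[U,1]$; w.h.p.\ this forces $U<U_{\mathrm{left}}$ or $U>V_{\mathrm{up}}$, where $Z_{\mathrm{left}}=(U_{\mathrm{left}},V_{\mathrm{left}})$ and $Z_{\mathrm{up}}=(U_{\mathrm{up}},V_{\mathrm{up}})$ are the leftmost and upmost points of $\check\pts$. Thus up to $\O_\P(1)$ the diagonal contribution is $S_n:=\card{\pts_\Delta\cap[0,U_{\mathrm{left}}]^2}+\card{\pts_\Delta\cap[V_{\mathrm{up}},1]^2}$, which, conditionally on $(U_{\mathrm{left}},V_{\mathrm{up}})$, is $\Binomial{t_1^{(n)}}{U_{\mathrm{left}}+1-V_{\mathrm{up}}}$. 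Since $\check n\,U_{\mathrm{left}}$ and $\check n\,(1-V_{\mathrm{up}})$ converge jointly to independent $\Exp{1}$ variables, the \emph{random mean} $t_1^{(n)}(U_{\mathrm{left}}+1-V_{\mathrm{up}})\approx (t_1^{(n)}/\check n)\cdot\Gamma_2$ fluctuates on scale $t_1^{(n)}/\check n$, which dominates the binomial noise (scale $\sqrt{t_1^{(n)}/\check n}$) when $t_1^{(n)}/\check n\to\infty$. So $(\check n/t_1^{(n)})S_n\to\Gam{2}{1}$: the diagonal part supplies $\Gamma_2$, not a Gaussian. The asymptotic independence of the two pieces then follows because, conditionally on $(U_{\mathrm{left}},V_{\mathrm{up}})$, $S_n$ and $\perm{\check\pts\cap C}$ are independent (\Cref{lem: conditionally on coordinates}).

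With the roles corrected, your Step~(5) actually becomes consistent: when $\alpha=\infty$ there are few fixed points, $S_n=\O_\P(1)$, and the surviving Gaussian comes from $\hrec{\check\pts\cap C}$.
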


Similar to the case of monotone subsequences, the study of low records in conjugation-invariant random permutations differs from that of high records.
This asymmetry is already seen in some regimes for the first order asymptotics, as shown in the following theorem.

\begin{theorem}\label{th: low records t-cyclic}
    For each $n$, let $t^{(n)}$ be a cycle type of size $n$ and let $\tau_n$ be a uniform $t^{(n)}$-cyclic permutation.
    \begin{enumerate}
        \item If $t_1^{(n)} = \O\left(\sqrt n\right)$ then we have the following convergence in probability:
        \begin{equation*}
            \frac{\lrec{\tau_n}}{\log n} \cv{n}{\infty} 1 .
        \end{equation*}
        \item If $\sqrt n \ll t_1^{(n)} \ll n$, i.e.~$\sqrt n = o\left( t_1^{(n)} \right)$ and $t_1^{(n)} = o(n)$, then we have the following convergence in probability:
        \begin{equation*}
            \frac{\lrec{\tau_n}}{\log n - \log t_1^{(n)}} \cv{n}{\infty} 2 .
        \end{equation*}
        \item If $t_1^{(n)}=\Theta\left( n \right)$ then $\lrec{\tau_n} = \O_\P(1)$.
    \end{enumerate}
\end{theorem}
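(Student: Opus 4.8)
Realise $\tau_n = \perm{\pts}$ with $\pts$ a geometric construction of $t^{(n)}$, so that $\lrec{\tau_n} = \lrec{\pts}$ is the number of \emph{minimal} points of $\pts$ for the coordinatewise order, i.e.\ the number of $Z = (X,Y)\in\pts$ with $\pts\cap([0,X]\times[0,Y]) = \{Z\}$. The diagonal points $\pts_\Delta$ form a chain, so among them only the lowest one, $(m,m)$ with $m := \min\{U : (U,U)\in\pts_\Delta\}$, can be minimal, while every other diagonal point is dominated by $(m,m)$ and is thus irrelevant to the minimality of any point of $\pts$. A short deterministic argument then gives, for $t_1^{(n)}\ge1$, the dichotomy: on $\{\check\pts\cap[0,m]^2\neq\emptyset\}$ one has $\lrec{\pts} = \lrec{\check\pts}$, whereas on the complement $\{\check\pts\cap[0,m]^2=\emptyset\}$ one has $\lrec{\pts} = 1 + \lrec{\check\pts\cap L} + \lrec{\check\pts\cap B}$ with $L := [0,m)\times(m,1]$ and $B := (m,1]\times[0,m)$ (on the second event no minimal point of $\check\pts$ has both coordinates $\ge m$, so its minimal points are exactly those of its restrictions to $L$ and to $B$). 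Since $m$ is the minimum of $t_1^{(n)}$ i.i.d.\ uniforms, $t_1^{(n)}m$ converges in law to $\Exp{1}$; hence $\check n m$ and $\check n m^2$ are of deterministic orders $\check n/t_1^{(n)}$ and $\check n/(t_1^{(n)})^2$, which decide both which branch holds with high probability and the typical number of points of $\check\pts$ landing in $L$ and $B$. (If $t_1^{(n)}=0$ there is no diagonal point and $\lrec{\tau_n}=\lrec{\check\pts}$ outright.)

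Two more inputs are needed. \emph{(i)} If $\pi$ is a geometric construction of a cycle type with no fixed point and size $N\to\infty$, then $\lrec{\pi}/\log N\to1$ in probability. \emph{(ii)} Because $L$ and $B$ lie strictly off the diagonal, \Cref{lem: iid outside diagonal t-cyclic} (applied after conditioning on $m$ and on the number of points of $\check\pts$ inside them) shows those points are i.i.d.\ uniform; together with the classical facts that $M$ i.i.d.\ uniform points have $\log M\,(1+o_\P(1))$ minimal points when $M\to\infty$ and $\O_\P(1)$ of them when $M=\O_\P(1)$, and with $\card{\check\pts\cap L}=\Theta_\P(\check n m)$ (its count being a sum of weakly dependent Bernoullis concentrated near $\check n m(1-m)$), this gives $\lrec{\check\pts\cap L}=\log(\check n m)(1+o_\P(1))$ when $\check n m\to\infty$ and $\lrec{\check\pts\cap L}=\O_\P(1)$ otherwise, and likewise for $B$. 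Granting \emph{(i)} and \emph{(ii)}, the three cases are bookkeeping. If $t_1^{(n)}=\O(\sqrt n)$ then $\check n\sim n$, $\lrec{\pts}\le\lrec{\check\pts}+1\le(1+o_\P(1))\log n$, and $\lrec{\pts}\ge(1-o_\P(1))\log n$ on either branch — directly on the first, while on the second branch (of non-vanishing probability only when $t_1^{(n)}\asymp\sqrt n$, where $\check n m=\Theta_\P(\sqrt n)$) each of $\lrec{\check\pts\cap L},\lrec{\check\pts\cap B}$ is at least $\tfrac12\log n\,(1-o_\P(1))$. If $\sqrt n\ll t_1^{(n)}\ll n$ then $\check n m^2\to0$, so the second branch holds w.h.p., and $\check n m\to\infty$, so $\lrec{\pts}=1+2\log(\check n m)(1+o_\P(1))\sim2(\log n-\log t_1^{(n)})$. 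If $t_1^{(n)}=\Theta(n)$ then again the second branch holds w.h.p., but $\check n m=\O_\P(1)$, so $\lrec{\pts}=1+\O_\P(1)+\O_\P(1)=\O_\P(1)$.

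For input \emph{(i)}, the expectation is a robust computation: writing $\lrec{\pi}=\sum_{(p,k,l)}\One{Z_{p,k}^l\text{ is minimal}}$ and conditioning on the coordinates $(a,b)$ of $Z_{p,k}^l$, the probability that no other point of $\pi$ lies in $[0,a]\times[0,b]$ is $(1+o(1))e^{-Nab}$ \emph{uniformly in the cycle length $p$} — the other cycles contribute $e^{-(N-p)ab}(1+o(1))$, the two cycle-neighbours of $Z_{p,k}^l$ only a factor $1-\O(a\wedge b)$, and the remaining $p-3$ points of its cycle $e^{-(p-3)ab}(1+o(1))$ — so $\E[\lrec{\pi}]=N\int_0^1\int_0^1(1+o(1))e^{-Nab}\,\d a\,\d b=(1+o(1))\log N$. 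Concentration — that $\var{\lrec{\pi}}=\O(\log N)$ — is the crux; I would obtain it by a second moment estimate bounding the covariance of the minimality indicators of two points of the same cycle, or by an Efron--Stein inequality (resampling one of the $N$ underlying uniforms displaces at most the two points carrying it as a coordinate), adapting the corresponding argument for i.i.d.\ points. The main obstacle is precisely this bound: the origin is the unique relevant corner lying \emph{on} the diagonal — which is exactly why low records behave differently from high records — and near it a long cycle deposits its points in runs of successive small values, so their minimality events are genuinely correlated, and one must check that these correlations (governed by runs of length $\ge3$, rare at the relevant scale) do not inflate the variance beyond order $\log N$. A secondary, routine but delicate point is the conditioning in \emph{(ii)}: the strips $L,B$ depend on the random $m$, and in the first case one conditions on the non-negligible event $\{\check\pts\cap[0,m]^2=\emptyset\}$; this event constrains only the cycle-neighbours of the points of $\check\pts$ lying in $L\cup B$, hence leaves their conditional uniformity and near-independence intact.
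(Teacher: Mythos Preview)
Your treatment of Cases~2 and~3 is essentially the paper's: let $(m,m)$ be the lowest diagonal point, show $\check\pts\cap[0,m]^2=\emptyset$ w.h.p., and decompose low records into the two off-diagonal bands $L,B$ where \Cref{lem: iid outside diagonal t-cyclic} applies. This part is fine.

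For Case~1, however, you take a genuinely different route, and it carries a real gap. You reduce everything to your input~\emph{(i)} --- that $\lrec{\check\pts}/\log\check n\to1$ for a fixed-point-free geometric construction --- and you correctly identify the variance bound as ``the crux'' and ``the main obstacle'', but you do not prove it. Your Efron--Stein sketch is not convincing as stated: resampling one underlying uniform $U_j$ does displace only two points, but removing a single point from a point set can create arbitrarily many new minimal points (namely all points whose only dominator was the removed one), so the one-coordinate influence on $\lrec{\cdot}$ is not bounded pointwise, and you would need a genuinely probabilistic control on it. Likewise, the second-moment route would require bounding covariances of minimality indicators for pairs of points in the same cycle near the origin, which --- as you yourself note --- is exactly where the cycle structure produces correlated runs. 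This is not obviously $O(\log N)$, and you have left it as an assertion.

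The paper sidesteps this entirely. It never proves your input~\emph{(i)}. Instead, for any fixed small $\epsilon>0$, it cuts out the corner square $C_n=[0,\epsilon/\sqrt{\check n}]^2$ and observes that $\prob{\pts\cap C_n\ne\emptyset}\le\delta(\epsilon)\to0$ as $\epsilon\to0$, uniformly in $n$ (since both $\check\pts$ and $\pts_\Delta$ have $O(1)$ expected points there when $t_1^{(n)}=O(\sqrt n)$). On the complementary event, the low records of $\pts$ lie in the two \emph{deterministic} off-diagonal bands $B_n=[0,\epsilon/\sqrt{\check n}]\times[\epsilon/\sqrt{\check n},1]$ and $B_n^*$, up to an $O_\P(1)$ error from a small residual rectangle. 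Each band contains $\Theta_\P(\epsilon\sqrt{\check n})$ points which, by \Cref{lem: iid outside diagonal t-cyclic}, are i.i.d.\ uniform; the classical \Cref{th: records uniform} then gives $\tfrac12\log n\,(1+o_\P(1))$ low records in each, and sending $\epsilon\to0$ after $n\to\infty$ finishes. The whole point of this $\epsilon$-band trick is to excise the diagonal corner --- precisely the region where your variance computation runs into trouble --- and reduce directly to i.i.d.\ points without ever needing concentration for $\lrec{\check\pts}$ as a whole. Your approach, if the variance bound could be established, would actually prove input~\emph{(i)} as a standalone result, which the paper does not; but as it stands, that step is missing.
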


We could not compute fluctuations for the number of low records in general.
To simplify the analysis, we restrict ourselves to products of $2$-cycles and products of $3$-cycles, for which exhibit different fluctuations.

\begin{prop}\label{prop: low records fluctuations}
    For each $n$, let $t^{(n)}$ be a cycle type of size $n$ such that $t_1^{(n)}=\O\left(\sqrt n\right)$, and let $\tau_n$ be a uniform $t^{(n)}$-cyclic permutation.
    \begin{enumerate}
        \item Suppose that $t_1^{(n)}+2t_2^{(n)}=n$.
        Then we have the following convergence in distribution:
        \begin{equation*}
            \frac{\lrec{\tau_n}-\log n}{\sqrt{\log n}}
            \cv{n}{\infty} \Normal{0}{2} .
        \end{equation*}
        \item Suppose that $t_1^{(n)}+3t_3^{(n)}=n$.
        Then we have the following convergence in distribution:
        \begin{equation*}
            \frac{\lrec{\tau_n}-\log n}{\sqrt{\log n}}
            \cv{n}{\infty} \Normal{0}{1} .
        \end{equation*}
    \end{enumerate}
\end{prop}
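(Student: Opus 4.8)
\emph{Proof strategy.}

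Recall from \Cref{sec: results records} that a point $Z=(X,Y)\in\pts$ is a low record of $\perm\pts$ exactly when its down-left corner $[0,X]\times[0,Y]$ contains no other point of $\pts$; thus $\lrec{\tau_n}$ equals the number of \emph{minimal points} of $\pts$ for the standard partial order on $\R^2$. The first step, common to both cases, is to discard the diagonal part. Since $t_1^{(n)}=\O(\sqrt n)$, the fixed-point variables are so sparse that, with the $\check n\sim n$ points of $\check\pts$ filling the square, only $\O_\P(1)$ of the diagonal points lie low enough (at scale $\lesssim n^{-1/2}$) either to be minimal themselves or to shadow a minimal point of $\check\pts$; hence $\lrec{\tau_n}=\lrec{\check\pts}+\O_\P(1)$, which is negligible relative to $\sqrt{\log n}$. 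We may therefore assume $t_1^{(n)}=0$, so that $n$ counts the non-fixed points; this reduction is carried out as in the proof of \Cref{th: low records t-cyclic}.

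\textbf{Case (1): products of $2$-cycles.} Here $\pts=\check\pts$ consists of $2m$ points, $m:=t_2^{(n)}$, forming the symmetrization across $\Delta$ of $m$ i.i.d.\ uniform points: for each $2$-cycle it contains a point $(U_k,V_k)$ and its mirror $(V_k,U_k)$. Reflection across $\Delta$ is an automorphism of the partial order and no point of $\pts$ lies on $\Delta$, so the minimal points of $\pts$ come in mirror pairs with exactly one member below $\Delta$; hence $\lrec{\tau_n}=2M_m$, where $M_m$ counts the minimal points of $\pts$ below $\Delta$. Writing $Q_k:=(\max(U_k,V_k),\min(U_k,V_k))$, one checks that $Q_k$ is minimal in $\pts$ iff it is minimal among $\{Q_1,\dots,Q_m\}$ alone (an above-$\Delta$ point can dominate $Q_k$ only if $Q_j$ already does). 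Since $(\max(U,V),\min(U,V))$ is uniform on the triangle $T:=\{(x,y):0\le y\le x\le1\}$ and the $Q_k$ are i.i.d., $M_m$ has the law of the number of minimal points of $m$ i.i.d.\ uniform points in $T$. I would then show that $M_m$ satisfies a central limit theorem with $\mean{M_m}=\tfrac12\log m+\O(1)$ and $\var{M_m}=\tfrac12\log m+\O(1)$: ordering the $Q_k$ by increasing $y$-coordinate as $Q_{(1)},\dots,Q_{(m)}$, let $\xi_i$ be the indicator that the $x$-coordinate of $Q_{(i)}$ is smaller than those of $Q_{(1)},\dots,Q_{(i-1)}$, so $M_m=\sum_i\xi_i$; an explicit computation gives $\prob{\xi_i=1}=\tfrac1i\exp(-\Theta(i^2/m))$, whence $\sum_i\prob{\xi_i=1}=\tfrac12\log m+\O(1)$ and $\sum_i\prob{\xi_i=1}^2=\O(1)$, while conditioning on $Q_{(j)}$ being a record renders the $x$-coordinates of $Q_{(1)},\dots,Q_{(i)}$ (for $i<j$) approximately i.i.d., so that $\cov{\xi_i}{\xi_j}$ is small (and positive) and $\sum_{i<j}\card{\cov{\xi_i}{\xi_j}}=\O(1)$, giving $\var{M_m}\sim\mean{M_m}$; asymptotic normality then follows from a martingale central limit theorem along the filtration that reveals the $Q_{(i)}$ in order. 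Granting this, $\lrec{\tau_n}=2M_m+\O_\P(1)$ has mean $\log n+\O(1)$ and variance $2\log n+\O(1)$, giving the $\Normal{0}{2}$ limit.

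\textbf{Case (2): products of $3$-cycles.} Now $n=3t_3^{(n)}$ and, for each $3$-cycle, $\pts$ contains $(U_k^1,U_k^2),(U_k^2,U_k^3),(U_k^3,U_k^1)$; there is no reflection symmetry, so no doubling occurs. An elementary case analysis shows that at most two of the three points of a $3$-cycle can be simultaneously minimal (namely the two incident to $\min(U_k^1,U_k^2,U_k^3)$; the third is always dominated by one of the other two), so $\lrec{\tau_n}=\sum_k\eta_k$ with $\eta_k\in\{0,1,2\}$. By linearity, the expected number of points of $\pts$ in any down-left rectangle is the same as for $n$ i.i.d.\ uniform points, and the probability that such a rectangle is empty agrees with the i.i.d.\ value up to a $1+o(1)$ factor; this gives $\mean{\lrec{\tau_n}}=\log n+\O(1)$. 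Moreover the within-cycle correlations amount to a rare event, contributing only $\O(1)$ to the variance --- in contrast to case (1), where the \emph{exact} mirror pairing doubles it --- so $\var{\lrec{\tau_n}}=\log n+\O(1)$, as for a uniform permutation. A martingale central limit theorem, now revealing the $3$-cycles in a suitable order, then yields the $\Normal{0}{1}$ limit.

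\textbf{Main obstacle.} The delicate point in both cases is pinning down the variance \emph{constant}: one must show that the dependence introduced by the cycle structure leaves the leading-order variance unchanged in case (2) but exactly doubles it in case (1). This requires genuine estimates on the point process near the lower-left staircase --- in particular, bounding by $\O(1)$ the total covariance $\sum_{i<j}\cov{\xi_i}{\xi_j}$ of the record indicators --- since the crude bound $\sum_{i<j}\prob{\xi_i=1}\prob{\xi_j=1}$ is of order $(\log m)^2$ and far too weak.
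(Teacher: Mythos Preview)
Your reduction to $\check\pts$ is fine, and in case~(1) the identity $\lrec{\tau_n}=2M_m+\O_\P(1)$, with $M_m$ the number of minima of $m$ i.i.d.\ uniform points in the lower triangle, is correct and a nice observation. But you have identified the obstacle yourself and not overcome it: the covariance bound $\sum_{i<j}\lvert\cov{\xi_i}{\xi_j}\rvert=\O(1)$ for records of points in a \emph{triangle} is not a standard fact, and your one-line heuristic (``conditioning on $Q_{(j)}$ being a record renders the earlier $x$-coordinates approximately i.i.d.'') is not close to a proof. In case~(2) the gap is wider. The sentence ``within-cycle correlations amount to a rare event, contributing only $\O(1)$ to the variance'' is precisely the content of the proposition, and you give no mechanism for it; and ``revealing the $3$-cycles in a suitable order'' does not yield a usable martingale, since whether a point of cycle $k$ is minimal depends on \emph{all} other cycles at once, so the increments are neither bounded nor conditionally centred in any natural filtration.

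The paper avoids all of this by a decomposition you do not use. Fix small $\epsilon>0$ and set $B_n:=[0,\epsilon/\sqrt{\check n}]\times[\epsilon/\sqrt{\check n},1]$, $B_n^*:=[\epsilon/\sqrt{\check n},1]\times[0,\epsilon/\sqrt{\check n}]$. As in the proof of \Cref{th: low records t-cyclic}, with high probability $\lrec{\pts}=\lrec{\check\pts\cap B_n}+\lrec{\check\pts\cap B_n^*}+\O_\P(1)$. By \Cref{lem: iid outside diagonal t-cyclic}, each of $\check\pts\cap B_n$ and $\check\pts\cap B_n^*$ is a family of i.i.d.\ uniform points (of size $\sim\epsilon\sqrt n$), so \emph{each summand separately} satisfies the classical CLT of \Cref{th: records uniform} with centring $\tfrac12\log n$ and scale $\sqrt{\tfrac12\log n}$. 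No variance computation is needed; the only remaining question is the \emph{joint} law of the two summands. For $2$-cycles, $\check\pts\cap B_n^*$ is the mirror image of $\check\pts\cap B_n$ across $\Delta$, hence $\perm{\check\pts\cap B_n^*}=\perm{\check\pts\cap B_n}^{-1}$ and the two low-record counts are a.s.\ equal: the limit is $2Y$ with $Y\sim\Normal01$, i.e.\ variance $2$. For $3$-cycles, a short case analysis on how a triple $(U^1,U^2,U^3)$ can meet the two bands shows that, conditionally on $E_n$ and on $\pts\cap B_n$, the permutation $\perm{\pts\cap B_n^*}$ is \emph{uniform} of the same size; the two summands are therefore conditionally independent given $\card{\pts\cap B_n}$, and the limit is $Y+Y'$ with $Y,Y'$ independent $\Normal01$, i.e.\ variance $1$. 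This localisation to off-diagonal bands, together with \Cref{lem: iid outside diagonal t-cyclic}, is the missing idea that replaces your attempted direct variance estimates.
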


It would be interesting to find the interpolation between the two items of \Cref{prop: low records fluctuations}.
However this would notably require understanding the interaction between $2$-cycles and other cycles, and its effect on the law of low records, which seems difficult to grasp.

\begin{rem}
    As in Corollaries \ref{cor: LDS conjugation-invariant}, \ref{cor: LIS conjugation-invariant}, \ref{cor: limit shape conjugation-invariant} and \ref{cor: pattern normality conjugation-invariant}, the results of this section remain true if $(\tau_n)$ is a sequence of conjugation-invariant random permutations and the hypotheses hold in probability.
\end{rem}

The results of this section are proved in \Cref{sec: proof records}.

\subsection{Pattern counts}
\label{sec: results pattern}

Let $\tau\in\S_n$ and $I=\{i_1<\dots<i_r\}$ be a subset of $[n]$.
We may define a permutation $\pi\in\S_r$ by the following rule:
for any $j,k\in[r]$, $\pi(j)>\pi(k)$ if and only if $\tau(i_j)>\tau(i_k)$.
We write $\pat(I,\tau)$ for this permutation, and say that this is the {\em pattern} induced by $\tau$ on $I$.

Permutation patterns are a natural notion of substructures for permutations, and as such they have attracted a lot of interest.
A central question is that of pattern counts: given a permutation $\pi$ of $[r]$, how many subsets of $\tau$ induce $\pi$ as a pattern?
In other words, we are interested in the statistic
\begin{equation*}
    X_\pi(\tau) := \sum_{I\in\binom{[n]}{r}} \One{\pat(I,\tau) = \pi}
\end{equation*}
where $\binom{[n]}{r}$ denotes the subsets of $[n]$ of size $r$.
For example, $X_{2\,1}(\tau)$ counts the number of inversions in $\tau$,
and $X_{1\,2\,\dots\,r}(\tau)$ counts the number of increasing subsequences of length $r$ in $\tau$.

Pattern counts in uniformly random permutations are known to be asymptotically normal since the works of B\'ona for monotone patterns \cite{B10}, and later Janson~et~al.~for all patterns:
\begin{theorem}[\cite{JNZ15}]\label{th: pattern counts in uniform}
    For each $n$, let $\tau_n$ be a uniformly random permutation of $[n]$.
    Then for any $r\in\N^*$:
    \begin{equation*}
        \left( \frac{X_\pi(\tau_n)-\binom{n}{r}\frac{1}{r!}}{n^{r-1/2}} \right)_{\pi\in\S_r}
        \cv{n}{\infty} \Normal{0}{\Sigma}
    \end{equation*}
    for some matrix $\Sigma$, of rank $(r\m1)^2$ and such that $\Sigma_{\pi,\pi}>0$ for all $\pi\in\S_r$ if $r\ge2$.
\end{theorem}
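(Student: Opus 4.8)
The plan is to realise $\tau_n$ via the geometric construction of the introduction and to recognise each $X_\pi(\tau_n)$ as a $U$-statistic. Let $Z_1,\dots,Z_n$ be i.i.d.\ $\Unif{\carre}$ and $\tau_n=\perm{Z_1,\dots,Z_n}$, a uniform permutation of $[n]$. For $I=\{i_1<\dots<i_r\}$ the pattern $\pat(I,\tau_n)$ depends only on the unordered set $\{Z_i:i\in I\}$ --- sort these $r$ points by abscissa and record the relative order of their ordinates --- so, writing $Z_I=(Z_i)_{i\in I}$ and $h_\pi(z_1,\dots,z_r):=\One{\text{the points }z_1,\dots,z_r\text{ induce the pattern }\pi}$ for the resulting symmetric $\{0,1\}$-valued kernel, one has
\[
X_\pi(\tau_n)=\sum_{I\in\binom{[n]}{r}}h_\pi(Z_I),
\]
a bounded $U$-statistic of order $r$ in the i.i.d.\ variables $Z_i$. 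By exchangeability and independence of the two coordinates, $r$ i.i.d.\ uniform points induce a uniform pattern, so $\E[h_\pi]=\tfrac1{r!}$ and $\E[X_\pi(\tau_n)]=\binom nr\tfrac1{r!}$, matching the centring in the statement.

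I would then use the Hoeffding (ANOVA) decomposition: there are symmetric kernels $h_{\pi,c}$ of arity $c$, completely degenerate for $c\ge1$ and with $h_{\pi,0}=\tfrac1{r!}$, such that $X_\pi(\tau_n)=\sum_{c=0}^r\binom{n-c}{r-c}\sum_{|S|=c}h_{\pi,c}(Z_S)$. Degeneracy makes the summands of the inner sum uncorrelated, so the term of order $c$ has variance $\binom{n-c}{r-c}^2\binom nc\,\var{h_{\pi,c}(Z_1,\dots,Z_c)}=\O\left(n^{2r-c}\right)$; hence the order-$1$ term dominates (variance of exact order $n^{2r-1}$, provided it is non-degenerate) while the orders $c\ge2$ are $o_\P(n^{r-1/2})$. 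Setting $g_\pi:=h_{\pi,1}$, i.e.\ $g_\pi(z)=\E[h_\pi(z,Z_2,\dots,Z_r)]-\tfrac1{r!}$, this gives
\[
\frac{X_\pi(\tau_n)-\binom nr\tfrac1{r!}}{n^{r-1/2}}=\frac1{(r-1)!}\cdot\frac1{\sqrt n}\sum_{i=1}^n g_\pi(Z_i)+o_\P(1).
\]
The $g_\pi(Z_i)$ being i.i.d.\ and bounded, the classical CLT gives each coordinate; and since for any real vector $(c_\pi)$ the linear combination $\sum_\pi c_\pi X_\pi(\tau_n)$ is again a bounded order-$r$ $U$-statistic with first-order projection $\sum_\pi c_\pi g_\pi$, the Cram\'er--Wold device yields the joint convergence to $\Normal{0}{\Sigma}$ with
\[
\Sigma_{\pi,\pi'}=\frac{1}{((r-1)!)^2}\,\cov{g_\pi(Z_1)}{g_{\pi'}(Z_1)},
\]
so that $\Sigma$ is, up to the scalar $((r-1)!)^{-2}$, the Gram matrix of $\{g_\pi\}_{\pi\in\S_r}$ in $L^2(\carre)$.

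It remains to analyse this Gram matrix. For non-degeneracy of the diagonal: writing $z=(x,y)$ and letting $(x,y)\to(0,0)$ forces the conditioned point to be the bottom-left one among the $r$ points, so $g_\pi(0,0)=\tfrac1{(r-1)!}\One{\pi(1)=1}-\tfrac1{r!}$, which is nonzero for every $\pi\in\S_r$ as soon as $r\ge2$; since $g_\pi$ is a polynomial, $g_\pi\not\equiv0$ and $\Sigma_{\pi,\pi}>0$. For the rank, two elementary facts about $g_\pi$ pin it down: first, expanding the kernel shows $g_\pi$ is a polynomial of degree $\le r-1$ in each of $x$ and $y$; second, $\int_0^1 g_\pi(x,y)\,\d x\equiv0$ and $\int_0^1 g_\pi(x,y)\,\d y\equiv0$, because conditioning only one coordinate of one point leaves the order of the $r$ points in the other coordinate uniform, hence the pattern uniform (so $\E[h_\pi\mid Y_1=y]=\tfrac1{r!}$ for every $y$, and symmetrically in $x$). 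Consequently every $g_\pi$ lies in the span of $\{P_a(x)P_b(y):1\le a,b\le r-1\}$, where $P_a$ is the degree-$a$ orthogonal polynomial on $[0,1]$; this space has dimension $(r-1)^2$, giving $\operatorname{rank}\Sigma\le(r-1)^2$.

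The main obstacle is the matching lower bound $\operatorname{rank}\Sigma=(r-1)^2$, i.e.\ that the family $\{g_\pi\}_{\pi\in\S_r}$ spans that whole space rather than a proper subspace (there is the obvious relation $\sum_\pi g_\pi\equiv0$, reflecting $\sum_\pi X_\pi(\tau_n)=\binom nr$, and one must show the linear relations among the $g_\pi$ are exhausted by a space of dimension $r!-(r-1)^2$). I would approach this by writing $g_\pi(x,y)+\tfrac1{r!}$ explicitly as a sum over the possible abscissa-rank $a\in[r]$ of the conditioned point (whose ordinate-rank is then $\pi(a)$), each term being a product of a Bernstein-type polynomial in $x$ and one in $y$ times a combinatorial coefficient determined by $\pi$ restricted to the points on either side of rank $a$, and then exhibiting $(r-1)^2$ patterns whose projections are linearly independent (or computing the Gram determinant in a convenient basis). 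This --- already carried out in \cite{JNZ15}, and in \cite{B10} for monotone patterns --- is a finite but somewhat delicate linear-algebra computation; all the genuinely probabilistic content sits in the two middle paragraphs.
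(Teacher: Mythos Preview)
Your proposal is correct and follows the same $U$-statistics route as \cite{JNZ15}, which the paper cites for this statement rather than reproving; the paper's own proofs in Section~7 (for the conjugation-invariant generalisations) specialise in the uniform case to exactly your argument of projecting onto the linear part $\sum_i\bar\psi_\pi(Z_i)$ and reading $\Sigma$ as a Gram matrix.

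One point worth flagging concerns the step you identified as the main obstacle. The paper (following \cite{JNZ15}) handles both rank bounds at once by expanding in the Bernstein basis $g_i(w)=\binom{r-1}{i-1}w^{i-1}(1-w)^{r-i}$: one gets $\bar\psi_\pi(u,v)=\frac{1}{(r-1)!}\sum_{i,j}\bigl(\delta_{j,\pi(i)}-\tfrac1r\bigr)g_i(u)g_j(v)$, so the rank of $\{\bar\psi_\pi\}_\pi$ equals the rank of the matrices $A^\pi:=\bigl(\delta_{j,\pi(i)}-\tfrac1r\bigr)_{i,j}$. These span precisely the space of $r\times r$ matrices with all row and column sums zero, which has dimension $(r-1)^2$ --- this is the ``finite but delicate linear-algebra computation'' you anticipated, and it is rather clean once phrased this way. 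Your orthogonal-polynomial argument for the upper bound is equivalent (zero marginals $\Leftrightarrow$ zero row/column sums of the coefficient matrix), but the Bernstein basis makes the lower bound transparent because the coefficient matrices are just shifted permutation matrices. Your corner-evaluation argument for $\Sigma_{\pi,\pi}>0$ is a pleasant shortcut; the paper instead argues that $\psi_\pi(u,v)+\psi_\pi(v,u)$ is non-constant, which it needs anyway for the $p_2>0$ generalisation.
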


The convergence result in this theorem can be established via the method of {\em dependency graphs}.
To prove non-degeneracy, i.e.~that $\Sigma_{\pi,\pi}$ is positive, B\'ona was able to find a lower bound for the variance of $X_\pi$ by expressing it in terms of covariances, while the authors of \cite{JNZ15} used the method of {\em U-statistics}.

In \cite{F13}, F\'eray established a central limit theorem for pattern counts in Ewens random permutations, and conjectured its non-degeneracy.
Kammoun \cite[Proposition~31]{K22} and Hamaker and Rhoades \cite[Theorem~8.8]{HR22} could generalize the non-degenerate asymptotic normality of \Cref{th: pattern counts in uniform} to conjugation-invariant permutations, under certain conditions on their cycles.
It was conjectured in \cite[Problem~9.9]{HR22} that those conditions could be lifted, and that the asymptotic variance would only depend on the proportions of fixed points and $2$-cycles.

Here, using the geometric construction of \Cref{lem: geometric construction t-cyclic}, we extend the convergence of \Cref{th: pattern counts in uniform} to most conjugation-invariant permutations.
If $\tau_n$ is a uniform $t$-cyclic permutation then we can write, for any $\pi\in\S_r$:
\begin{equation}\label{eq: write X as sum of partly dependent}
    X_\pi(\tau_n) = \sum_{I\in\binom{\Vert_t}{r}} \One{\perm{Z_i,i\in I} = \pi}
\end{equation}
with the notation of \Cref{sec: geometric construction}.
Thanks to the dependency graph $\L_t$ defined in \Cref{sec: notation}, we derive a limit theorem for $X_\pi(\tau_n)$.
More precisely, Stein's method yields explicit bounds on the speed of convergence.
In the following, we denote by $d_K$ the Kolmogorov distance between two probability distributions.
\begin{theorem}\label{th: pattern Stein}
    Let $Y\sim\Normal{0}{1}$.
    For each $n$, let $t^{(n)}$ be a cycle type of size $n$ and $\tau_n$ be a uniform $t^{(n)}$-cyclic permutation.
    Fix $r\in\N^*$ and $\pi\in\S_r$.
    Then for any $n$:
    \begin{equation*}
        d_K\left(\frac{X_\pi(\tau_n)-\mean{X_\pi(\tau_n)}}{\sqrt{\var{X_\pi(\tau_n)}}} \,,\, Y\right)
        \le 42\frac{r}{(r\m1)!^2} \frac{n^{2r-3/2}}{\var{X_\pi(\tau_n)}}
        + 72\frac{r}{(r\m1)!^3} \frac{n^{3r-2}}{\var{X_\pi(\tau_n)}^{3/2}} .
    \end{equation*}
    Furthermore, if $\var{X_\pi(\tau_n)}=\sigma^2 n^{2r-1}+o(n^{2r-1})$ for some $\sigma\ge0$ as $n\to\infty$, then
    \begin{equation*}
        \frac{X_\pi(\tau_n)-\mean{X_\pi(\tau_n)}}{n^{r-1/2}}
        \cv{n}{\infty} \Normal{0}{\sigma^2}
    \end{equation*}
    in distribution, and if $\sigma>0$ then the previous distance is bounded by $\O(n^{-1/2})$.
\end{theorem}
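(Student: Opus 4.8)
The plan is to read off from \eqref{eq: write X as sum of partly dependent} that $X_\pi(\tau_n)$ is a sum, over the $r$-subsets of $\Vert_t$, of $\{0,1\}$-valued variables that are only \emph{locally} dependent — the local dependence being exactly the one encoded by the graph $\L_t$, whence its name — and then to feed this into an off-the-shelf quantitative CLT for sums of random variables with a dependency graph (a Stein-method bound, as in Ross's survey or Chen--Goldstein--Shao). First I would center the summands: writing $Y_I:=\One{\perm{Z_i,\,i\in I}=\pi}$ for $I\in\binom{\Vert_t}{r}$ and $\xi_I:=Y_I-\mean{Y_I}$, one has $X_\pi(\tau_n)-\mean{X_\pi(\tau_n)}=\sum_I\xi_I$, with $|\xi_I|\le1$, and hence $\sum_I\mean{|\xi_I|^3}\le\binom nr$ and $\sum_I\mean{\xi_I^4}\le\binom nr$, using $|\Vert_t|=n$.

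Next I would pin down the dependency neighborhoods. By \Cref{lem: geometric construction t-cyclic}, each point $Z_i=Z_{p,k}^l=(U_{p,k}^l,U_{p,k}^{l+1})$ is a deterministic function of exactly the two i.i.d.\ uniforms indexed by $i$ and by $\s(i)$; consequently, for any $A,B\subseteq\binom{\Vert_t}{r}$, the collections $(\xi_I)_{I\in A}$ and $(\xi_J)_{J\in B}$ are independent as soon as $\bigcup_{I\in A}(I\cup\s(I))$ and $\bigcup_{J\in B}(J\cup\s(J))$ are disjoint, i.e.\ as soon as no vertex occurring in $A$ is equal or $\L_t$-adjacent to a vertex occurring in $B$. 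Thus a valid dependency neighborhood of $I$ is $N_I:=\{J\in\binom{\Vert_t}{r}:J\cap(I\cup\s(I)\cup\s^{-1}(I))\neq\emptyset\}$, and since $I\cup\s(I)\cup\s^{-1}(I)$ has at most $3r$ elements and any fixed set of size $3r$ meets at most $3r\binom{n}{r-1}$ of the $r$-subsets of $[n]$, the maximal degree satisfies $D\le3r\binom{n}{r-1}\le 3r\,n^{r-1}/(r-1)!$. Because $\L_t$ has maximum degree $2$, the second-order dependency neighborhoods are of the same order, so the hypotheses of the dependency-graph CLT hold.

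Then I would substitute. With $\sigma^2=\var{X_\pi(\tau_n)}$, the standard bound has the shape $d_K\!\big(\tfrac1\sigma\sum_I\xi_I,\,Y\big)\le C_1\,\sigma^{-3}D^2\sum_I\mean{|\xi_I|^3}+C_2\,\sigma^{-2}D^{3/2}\big(\sum_I\mean{\xi_I^4}\big)^{1/2}$ for universal constants $C_1,C_2$; plugging in $D\le3r\,n^{r-1}/(r-1)!$ and $\binom nr\le n^r/r!$ and collecting factorials turns the first term into a constant times $\tfrac{r}{(r-1)!^3}\,\tfrac{n^{3r-2}}{\sigma^3}$ and the second into a constant times $\tfrac{r}{(r-1)!^2}\,\tfrac{n^{2r-3/2}}{\sigma^2}$ (the $3^{3/2}$ coming from $D^{3/2}$ being absorbed), which is precisely the claimed inequality once one checks the numerical constants $72$ and $42$ suffice. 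For the last assertion: if $\var{X_\pi(\tau_n)}=\sigma^2n^{2r-1}+o(n^{2r-1})$ with $\sigma>0$, both error terms above are $O(n^{-1/2})$, so $d_K$ between the normalized statistic and $Y$ is $O(n^{-1/2})$; since moreover $\var{X_\pi(\tau_n)}/n^{2r-1}\to\sigma^2$, Slutsky's lemma upgrades this to $\tfrac{X_\pi(\tau_n)-\mean{X_\pi(\tau_n)}}{n^{r-1/2}}\cv{n}{\infty}\Normal{0}{\sigma^2}$. If instead $\sigma=0$, Stein is not needed: $\var\!\big(\tfrac{X_\pi(\tau_n)-\mean{X_\pi(\tau_n)}}{n^{r-1/2}}\big)\to0$, so Chebyshev's inequality already gives convergence to $0$, the degenerate Gaussian.

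I expect the main obstacle to be organisational rather than conceptual: one must select the precise formulation of the dependency-graph Stein bound whose hypotheses match this setup and whose universal constants reproduce the stated $42$ and $72$ after the factorial bookkeeping, and one must be careful with the degree estimate and with verifying the first- and second-neighborhood independence conditions. The probabilistic input — independence of functions of disjoint blocks of i.i.d.\ uniforms — is immediate from \Cref{lem: geometric construction t-cyclic}, and the only mildly fiddly point is recording that the centered indicators have third and fourth moments summing to at most $\binom nr$.
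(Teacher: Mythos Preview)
Your proposal is correct and follows essentially the same route as the paper: identify $\binom{\L_t}{r}$ as a dependency graph for the indicators in \eqref{eq: write X as sum of partly dependent}, bound its maximal degree by $3r\binom{n}{r-1}$, and plug $B=1$, $D$, $N=\binom{n}{r}$ into an off-the-shelf Stein bound (the paper cites \cite[Theorem~3.5]{H18} directly, which already packages the constants). The handling of the second assertion via Chebyshev for $\sigma=0$ and the $\O(n^{-1/2})$ rate for $\sigma>0$ is exactly what the paper does.
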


Thanks to \cite[Proposition 7.2 and Theorem 8.14]{HR22}, the condition $\var{X_\pi(\tau_n)}=\sigma^2 n^{2r-1}+o(n^{2r-1})$ holds under the simple assumption that $t_1^{(n)}/n$ and $t_2^{(n)}/n$ converge.
We may still want to find an explicit formula for $\sigma$, and to understand in which cases $\sigma>0$.
For this we use the method of $U$-statistics developed by Hoeffding \cite{H48}, as was done in \cite{JNZ15}.
Before stating our results, we need a few definitions.

Fix $r\in\N^*$ and $p_1\in[0,1]$.
Then let $\hat Z_1,\dots,\hat Z_r$ be i.i.d.~variables distributed under $p_1\Leb_\Delta+(1\m p_1)\Leb_{\carre}$, where $\Leb_\Delta$ denotes the Lebesgue measure on the diagonal $\Delta$ of $\carre$.
Define, for any $z\in\carre$:
\begin{equation*}
    \psi_\pi^{p_1}(z) := \prob{ \perm{\hat Z_1,\dots,\hat Z_{r-1},z} = \pi }
\end{equation*}
and
\begin{equation*}
    \mu_\pi^{p_1} := \prob{ \perm{\hat Z_1,\dots,\hat Z_{r}} = \pi } = \mean{\psi_\pi^{p_1}\left(\hat Z_r\right)} .
\end{equation*}

\begin{theorem}\label{th: pattern normality t-cyclic with fixed points}
    Let $(t^{(n)})_n$ be a sequence of cycle types of size $n$ such that $t_1^{(n)} = n p_1 + o(\sqrt{n})$ and $2t_2^{(n)} = n p_2 + o(n)$ as $n\to\infty$, for some $p_1,p_2\in[0,1]$.
    For each $n$, let $\tau_n$ be a uniform $t^{(n)}$-cyclic permutation.
    Then for any $r\in\N^*$, we have the following convergence in distribution:
    \begin{equation*}
        \left( \frac{X_\pi(\tau_n) - \binom{n}{r}\mu_{\pi}^{p_1}}{n^{r-1/2}} \right)_{\pi\in\S_r}
        {\cv{n}{\infty}}
        \Normal{0}{\Sigma^{p_1,p_2}}
    \end{equation*}
    where for any $\pi,\rho\in\S_r$, if $U,V,W$ are i.i.d.~$\Unif{[0,1]}$ variables:
    \begin{multline*}
        (r\m1)!^2\, \Sigma_{\pi,\rho}^{p_1,p_2} = p_1\cov{\psi_\pi^{p_1}(U,U)}{\psi_\rho^{p_1}(U,U)} + (1\m p_1)\cov{\psi_\pi^{p_1}(U,V)}{\psi_\rho^{p_1}(U,V)} \\+ p_2\cov{\psi_\pi^{p_1}(U,V)}{\psi_\rho^{p_1}(V,U)} + 2(1\m p_1\m p_2)\cov{\psi_\pi^{p_1}(U,V)}{\psi_\rho^{p_1}(V,W)} .
    \end{multline*}
\end{theorem}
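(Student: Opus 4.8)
The plan is to recognize $X_\pi(\tau_n)$ as (essentially) a $U$-statistic built from the point process $\check\pts\cup\pts_\Delta$, and to apply Hoeffding's projection method together with the asymptotic de-Poissonization/decoupling afforded by \Cref{lem: iid outside diagonal t-cyclic} and the weak dependency in $\L_t$. First I would set up the ``continuous'' model: let $\widehat\pts = \{\hat Z_1,\dots,\hat Z_N\}$ with $N = n$ points sampled i.i.d.\ from $p_1\Leb_\Delta + (1-p_1)\Leb_{\carre}$ (or a Poissonized version with intensity $n$), and write $\widehat X_\pi := \sum_{I\in\binom{[N]}{r}}\One{\perm{\hat Z_i,i\in I}=\pi}$, a genuine $U$-statistic of order $r$ with symmetric kernel $h_\pi(z_1,\dots,z_r) = \One{\perm{z_1,\dots,z_r}=\pi}$. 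Its Hoeffding decomposition has first projection $\psi_\pi^{p_1}(z) - \mu_\pi^{p_1}$, so by the classical CLT for non-degenerate $U$-statistics, $n^{-(r-1/2)}(\widehat X_\pi - \binom{N}{r}\mu_\pi^{p_1})$ is asymptotically Gaussian with covariance $r^2\,\cov{\psi_\pi^{p_1}(\hat Z)}{\psi_\rho^{p_1}(\hat Z)}$; expanding the single integral over $\hat Z \sim p_1\Leb_\Delta+(1-p_1)\Leb_{\carre}$ into its $\Delta$ and off-diagonal parts, and using that $\hat Z_1,\dots,\hat Z_{r-1}$ contribute the factors $\psi^{p_1}$, gives exactly the first ($p_1$ and $1-p_1$) terms of $\Sigma^{p_1,p_2}_{\pi,\rho}$ after dividing by $(r-1)!^2$.

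The extra two covariance terms (the $p_2$ and $2(1-p_1-p_2)$ pieces) come from the correlations between the \emph{two} coordinates of a single point $Z_{p,k}^l = (U_{p,k}^l, U_{p,k}^{l+1})$: within a cycle, consecutive points in $\pts_{p,k}$ share a coordinate, so they are not independent even off the diagonal. So the next step is to compare $X_\pi(\tau_n)$ with $\widehat X_\pi$ by a careful second-moment / covariance computation on $\L_t$. Concretely I would expand $\var{X_\pi(\tau_n)}$ and $\cov{X_\pi(\tau_n)}{X_\rho(\tau_n)}$ as a sum over pairs $(I,J)$ of $r$-subsets of $\Vert_t$, classify the pairs by the structure of the induced subgraph of $\L_t$ on $I\cup J$, and show that only the ``interactions'' through a single $\L_t$-edge (one shared coordinate) contribute at order $n^{2r-1}$; higher-order overlaps (two or more shared coordinates, i.e.\ $|I\cap J|\ge 1$ or longer $\L_t$-paths) are $O(n^{2r-2})$ by the counting bounds already implicit in \Cref{th: pattern Stein}. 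An $\L_t$-edge joining a point of $I$ to a point of $J$ is a $2$-cycle edge with probability $\approx p_2$ and a longer-cycle edge with probability $\approx 1-p_1-p_2$; in the former case the shared coordinate appears as a ``first'' coordinate on one side and a ``second'' coordinate on the other, producing $\cov{\psi_\pi^{p_1}(U,V)}{\psi_\rho^{p_1}(V,U)}$, while in the latter the two endpoints can be ``aligned'' in one of two ways, producing $2\cov{\psi_\pi^{p_1}(U,V)}{\psi_\rho^{p_1}(V,W)}$. Matching these local contributions to the stated formula, and checking that the remaining combinatorial factors assemble into $(r-1)!^{-2}$, gives the covariance.

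With the limiting covariance $\Sigma^{p_1,p_2}$ identified, the CLT itself (joint over $\pi\in\S_r$) follows from \Cref{th: pattern Stein} applied coordinatewise via the Cramér--Wold device: for any fixed linear combination $\sum_\pi a_\pi X_\pi(\tau_n)$, the same dependency-graph Stein bound applies (the kernel $\sum_\pi a_\pi\One{\perm{\cdot}=\pi}$ is still bounded), so it suffices that the variance of the combination is of exact order $n^{2r-1}$ with the coefficient read off from $\Sigma^{p_1,p_2}$ — which is precisely the covariance computation above — and the hypotheses $t_1^{(n)} = np_1+o(\sqrt n)$, $2t_2^{(n)} = np_2 + o(n)$ guarantee that the ``probabilities'' of the various edge types converge to $p_1,p_2,1-p_1-p_2$ fast enough (the $o(\sqrt n)$ on fixed points is what makes the diagonal/off-diagonal mix converge in the mean as well as the variance). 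The centering $\binom{n}{r}\mu_\pi^{p_1}$ replaces $\mean{X_\pi(\tau_n)}$ up to $o(n^{r-1/2})$: one computes $\mean{X_\pi(\tau_n)} = \binom{n}{r}\prob{\perm{Z_{i_1},\dots,Z_{i_r}}=\pi}$ for a \emph{uniformly random} $r$-subset of $\Vert_t$, and shows this probability equals $\mu_\pi^{p_1} + o(n^{-1/2})$ by noting that a uniform $r$-subset of $\Vert_t$ almost surely spans no $\L_t$-edge, hence its points are asymptotically i.i.d.\ from the mixed law, the error from ``bad'' subsets being $O(1/n)$.

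I expect the main obstacle to be the covariance bookkeeping in the second step: correctly enumerating the $\L_t$-edge types that straddle $I$ and $J$, tracking which shared coordinate plays the role of a ``first'' versus ``second'' argument of $\psi^{p_1}$ (hence the $(U,V)/(V,U)$ versus $(U,V)/(V,W)$ distinction), and verifying that all other configurations are negligible — in particular controlling the contribution of cycles of length $\ge 3$, where a point has two distinct $\L_t$-neighbours, and making sure double-counting factors collapse exactly to $(r-1)!^2$. Everything else (the $U$-statistic CLT, the Stein bound, the de-Poissonization of the continuous model, the mean computation) is either cited or routine.
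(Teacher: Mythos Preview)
Your variance classification contains a concrete error: you claim that pairs $(I,J)$ with $|I\cap J|\ge 1$ contribute only $O(n^{2r-2})$, but the pairs with $|I\cap J|=1$ (and no further $\L_t$-connection) are in fact a \emph{leading} contribution to $\var{X_\pi(\tau_n)}$. There are $\Theta(n^{2r-1})$ such pairs, and each has nonzero covariance; after summing, they produce precisely the $p_1\cov{\psi(U,U)}{\psi(U,U)}$ and $(1-p_1)\cov{\psi(U,V)}{\psi(U,V)}$ terms, according to whether the shared vertex lies in $\Vert_1$ or not. The pairs where $I,J$ are disjoint but joined by exactly one $\L_t$-edge are \emph{also} of order $n^{2r-1}$ and give the remaining two terms. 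So the four pieces of $\Sigma^{p_1,p_2}$ correspond to: shared vertex in $\Vert_1$; shared vertex outside $\Vert_1$; single edge inside a $2$-cycle; single edge inside a cycle of length $\ge3$ (two orientations). Your attempt to absorb the first two pieces into the auxiliary i.i.d.\ model $\widehat X_\pi$ does not work as written, because $\widehat X_\pi$ lives on a different probability space and no coupling is given that makes ``compare $X_\pi(\tau_n)$ with $\widehat X_\pi$ by a second-moment computation'' meaningful; the ``de-Poissonization'' you mention is not present anywhere in the argument.

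The paper avoids this detour entirely. It first replaces $X$ by $Y=\sum_{I\text{ indep.}}\One{\perm{Z_i,i\in I}=\pi}$ (losing only $O(n^{r-1})$ deterministically), then performs Hoeffding's projection \emph{directly on $Y$}: subtracting from each summand its univariate conditional expectations yields a residual $Y_*$ with $\mean{Y_*^2}=O(n^{2r-2})$, and the linear part collapses (after bookkeeping on the number $q=|I\cap\Vert_1|$ of fixed points in $I$) to $\widetilde Y = \binom{n}{r-1}\sum_{i\in\Vert_t}\big(\psi(Z_i)-\mu\big)$. The variance of $\widetilde Y$ is then $\binom{n}{r-1}^2\sum_{i,j}\cov{\psi(Z_i)}{\psi(Z_j)}$, nonzero only when $i=j$ or $i\sim j$ in $\L_t$; splitting by whether $i\in\Vert_1$, $i\in\Vert_2$, or $i\in\Vert_3$ gives the four covariance terms immediately. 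Your identification of the $p_2$ and $2(1-p_1-p_2)$ terms with $2$-cycle versus longer-cycle edges is correct, and your remarks on Cram\'er--Wold and the centering are fine, but the covariance computation must be organized this way rather than through a phantom coupling.
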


\begin{cor}\label{cor: pattern normality conjugation-invariant}
    Let $(\tau_n)_n$ be a sequence of conjugation-invariant random permutations, where $\tau_n$ has size $n$ and (random) cycle type $t^{(n)}$.
    Suppose that $t_1^{(n)} = np_1+o_\P(\sqrt n)$ and $2t_2^{(n)} = np_2+o_\P(n)$ as $n\to\infty$, for some $p_1,p_2\in[0,1]$.
    Then the convergence of \Cref{th: pattern normality t-cyclic with fixed points} holds.
\end{cor}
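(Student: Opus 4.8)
The plan is to condition on the cycle type and then reduce to \Cref{th: pattern normality t-cyclic with fixed points} by a standard subsequence argument. Recall that, conditionally on its cycle type $t^{(n)}$, the permutation $\tau_n$ is a uniform $t^{(n)}$-cyclic permutation. Fix $r\in\N^*$, set $\mathbf X_n := \left(n^{-(r-1/2)}\left(X_\pi(\tau_n)-\binom nr\mu_\pi^{p_1}\right)\right)_{\pi\in\S_r}$, and write $g(\xi):=\exp\left(-\tfrac12\,\xi^\top\Sigma^{p_1,p_2}\xi\right)$ for the characteristic function of $\Normal{0}{\Sigma^{p_1,p_2}}$, $\xi\in\R^{r!}$. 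By the Lévy continuity theorem it suffices to prove that $\mean{\exp\left(\mathrm i\langle\xi,\mathbf X_n\rangle\right)}\to g(\xi)$ for every $\xi$. By the tower property this expectation equals $\mean{\varphi_n(\xi)}$, where $\varphi_n(\xi):=\mean{\exp\left(\mathrm i\langle\xi,\mathbf X_n\rangle\right)\mid t^{(n)}}$ is a complex-valued random variable of modulus at most $1$; so by dominated convergence it is enough to show that $\varphi_n(\xi)\to g(\xi)$ in probability.

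To this end I would use that convergence in probability is equivalent to the following: every subsequence admits a further subsequence along which convergence holds almost surely. Fix a subsequence. Since $\left(t_1^{(n)}-np_1\right)/\sqrt n\to0$ and $\left(2t_2^{(n)}-np_2\right)/n\to0$ in probability by hypothesis, extract a further subsequence $(n_j)_j$ along which both convergences hold almost surely. On the corresponding almost sure event, fix an outcome $\omega$: then $\left(t^{(n_j)}(\omega)\right)_j$ is a deterministic sequence of cycle types of sizes $n_j\to\infty$ with $t_1^{(n_j)}=n_jp_1+o(\sqrt{n_j})$ and $2t_2^{(n_j)}=n_jp_2+o(n_j)$. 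The conclusion of \Cref{th: pattern normality t-cyclic with fixed points} holds for any such sequence — the indexing of cycle types by their size plays no role in its proof — so the conditional law of $\mathbf X_{n_j}$ given $t^{(n_j)}=t^{(n_j)}(\omega)$ converges to $\Normal{0}{\Sigma^{p_1,p_2}}$, and in particular $\varphi_{n_j}(\xi)(\omega)\to g(\xi)$. Since this holds for almost every $\omega$, we get $\varphi_{n_j}(\xi)\to g(\xi)$ almost surely, which is what we wanted.

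The only delicate point, which I would flag explicitly, is the centering: $\mathbf X_n$ is centered by the limiting constant $\binom nr\mu_\pi^{p_1}$ rather than by the conditional mean $\mean{X_\pi(\tau_n)\mid t^{(n)}}$, so for the discrepancy to vanish at scale $n^{r-1/2}$ one needs $t_1^{(n)}$ to lie within $o(\sqrt n)$ of $np_1$ — exactly the rate assumed here in probability, and which the extraction turns into an almost sure $o(\sqrt{n_j})$ bound on the realizations. This is already absorbed into \Cref{th: pattern normality t-cyclic with fixed points}, so no additional estimate is needed once the subsequences are chosen; alternatively one could appeal to the continuity of $p_1\mapsto\mu_\pi^{p_1}$ and $(p_1,p_2)\mapsto\Sigma^{p_1,p_2}$, but the subsequence reduction makes this superfluous.
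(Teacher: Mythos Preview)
Your proof is correct and follows essentially the same route as the paper's: condition on the cycle type, invoke \Cref{th: pattern normality t-cyclic with fixed points}, and pass to the unconditional limit by dominated convergence. The only differences are that you work with characteristic functions rather than distribution functions, and that you spell out the subsequence extraction needed to pass from the ``in probability'' hypotheses to realizations where the deterministic theorem applies --- a step the paper leaves implicit.
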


Note that \Cref{th: pattern normality t-cyclic with fixed points} states a joint convergence, rather than the marginal convergence stated in \Cref{th: pattern Stein}.
We stress that the true novelty of \Cref{th: pattern normality t-cyclic with fixed points} lies in the computation of the variance, carried out by the method of $U$-statistics, and not in the joint convergence.
Indeed, \Cref{th: pattern Stein} could have been stated for joint convergence, but we chose not to do so for convenience.

The matrix $\Sigma^{p_1,p_2}$ turns out to be rather difficult to study when $p_1\in(0,1)$.
Without further hypotheses, we could only prove that $X_\pi(\tau_n)$ satisfies non-degenerate asymptotic normality if $\pi$ is an involution.

\begin{prop}\label{prop: non degenerate involution pattern}
    If $p_1<1$ and $\pi\in\S_r$, $r\ge2$ satisfies $\pi=\pi^{-1}$, then $\Sigma_{\pi,\pi}^{p_1,p_2} >0$.
\end{prop}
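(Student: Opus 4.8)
The plan is to show that $\Sigma_{\pi,\pi}^{p_1,p_2}$ is strictly positive by isolating a single non-negative term in the four-term expression for $\Sigma_{\pi,\pi}^{p_1,p_2}$ and proving that this term alone is strictly positive. Each of the four summands is of the form (nonnegative coefficient) times a covariance of the shape $\cov{\psi_\pi^{p_1}(A,B)}{\psi_\pi^{p_1}(C,D)}$ where the pairs of arguments are built from i.i.d.\ uniform $U,V,W$; the ``diagonal'' variances $\var{\psi_\pi^{p_1}(U,U)}$ and $\var{\psi_\pi^{p_1}(U,V)}$ are automatically $\ge 0$, but the two cross-covariance terms need not be. Since $p_1<1$, the coefficient $(1\m p_1)$ of the term $\cov{\psi_\pi^{p_1}(U,V)}{\psi_\pi^{p_1}(U,V)} = \var{\psi_\pi^{p_1}(U,V)}$ is strictly positive, so it suffices to prove $\var{\psi_\pi^{p_1}(U,V)}>0$ \emph{and} that the remaining three terms are $\ge 0$. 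The hypothesis $\pi=\pi^{-1}$ is exactly what forces the sign of the cross terms: transposing the plane (swapping the two coordinates of every point) sends $\perm{\pts}$ to $\perm{\pts}^{-1}$, so $\psi_\pi^{p_1}(u,v)$ and $\psi_{\pi^{-1}}^{p_1}(v,u)=\psi_\pi^{p_1}(v,u)$ coincide as functions; hence $\cov{\psi_\pi^{p_1}(U,V)}{\psi_\pi^{p_1}(V,U)} = \var{\psi_\pi^{p_1}(U,V)}\ge 0$, and likewise $\cov{\psi_\pi^{p_1}(U,V)}{\psi_\pi^{p_1}(V,W)}$, being a covariance of two functions each obtained from the symmetric (in law) function $(u,v)\mapsto\psi_\pi^{p_1}(u,v)$ of an exchangeable pair with one shared coordinate, is nonnegative by the usual positive-correlation argument (condition on the shared variable $V$: $\E[\psi_\pi^{p_1}(U,V)\psi_\pi^{p_1}(V,W)\mid V] = h(V)^2$ where $h(v)=\E[\psi_\pi^{p_1}(U,v)]$, so the covariance equals $\var{h(V)}\ge 0$). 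The $p_1$-diagonal term $p_1\var{\psi_\pi^{p_1}(U,U)}$ is trivially $\ge 0$.

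So the whole proposition reduces to the single claim $\var{\psi_\pi^{p_1}(U,V)}>0$, i.e.\ that the function $(u,v)\mapsto \psi_\pi^{p_1}(u,v) = \prob{\perm{\hat Z_1,\dots,\hat Z_{r-1},(u,v)}=\pi}$ is not a.e.\ constant on $\carre$. The natural route, mirroring \cite{JNZ15,B10}, is to exhibit two points $z=(u,v)$ and $z'=(u',v')$ at which the values differ. Take $z'$ in the ``extreme corner'' --- e.g.\ $u'$ close to $1$ and $v'$ close to $1$, so that the inserted point is almost surely the rightmost and topmost among $\hat Z_1,\dots,\hat Z_{r-1},z'$; then $\psi_\pi^{p_1}(z')$ is (approximately, and in the limit exactly) $\prob{\pat(\{1,\dots,r-1\},\sigma)=\pi'}$ where $\sigma=\perm{\hat Z_1,\dots,\hat Z_{r-1}}$ and $\pi'$ is $\pi$ with its last entry (necessarily the value $r$ occurring in the last position, since $\pi=\pi^{-1}$ and $\pi(r)=r$ would be needed — one must be slightly careful here) removed; more robustly, choosing the corner appropriately makes $\psi_\pi^{p_1}(z')$ equal to the probability that the remaining $r-1$ points realize a specific fixed pattern, a quantity strictly between $0$ and $1$. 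Then take $z$ near the \emph{opposite} relevant corner, forcing the inserted point into a different role, so that $\psi_\pi^{p_1}(z)$ equals the probability of a \emph{different} fixed $(r-1)$-pattern; since these two conditional probabilities are generically unequal (and one can always arrange an asymmetry because $r\ge 2$), we get $\var{\psi_\pi^{p_1}(U,V)}>0$. One clean way to carry this out: show that $u\mapsto \psi_\pi^{p_1}(u,v)$ has a genuine jump or that the two limits as $(u,v)$ tends to two distinct corners of $\carre$ are distinct values in $[0,1]$, both of which are limits of $\psi_\pi^{p_1}$ over sets of positive Lebesgue measure, hence the function cannot be a.e.\ constant.

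The main obstacle is the corner analysis: one must choose the test points $z,z'$ so that (i) each forces the inserted point into a combinatorially rigid position relative to the other $r-1$ points, reducing $\psi_\pi^{p_1}(z)$ and $\psi_\pi^{p_1}(z')$ to probabilities of patterns of size $r-1$ under the mixed law $p_1\Leb_\Delta+(1\m p_1)\Leb_\carre$, and (ii) these two reduced pattern-probabilities are provably distinct for \emph{every} self-inverse $\pi$ with $r\ge 2$. Point (ii) is where the hypotheses $p_1<1$ and $r\ge 2$ genuinely enter, and it will likely require splitting on the structure of $\pi$ (e.g.\ whether $\pi$ has a fixed point at $r$, whether $r\ge 3$, etc.) or invoking a monotonicity of $\psi_\pi^{p_1}$ in one coordinate together with the observation that it is not identically constant because $X_\pi$ is not a deterministic statistic. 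A subsidiary technical point is that $\psi_\pi^{p_1}$ need not be continuous on all of $\carre$ (the mixed sampling law puts an atom of mass on $\Delta$), so the corner limits should be taken through the interior or through $\Delta$ consistently, and one should phrase the conclusion directly as ``$\psi_\pi^{p_1}$ is not Lebesgue-a.e.\ constant'' rather than appealing to continuity.
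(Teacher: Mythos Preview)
Your overall strategy matches the paper's exactly: use the involution hypothesis to show the cross-covariance terms are nonnegative, then reduce to proving $\var{\psi_\pi^{p_1}(U,V)}>0$. Your treatment of the cross terms is correct and essentially identical to the paper's argument. The paper combines the $(1-p_1)$ and $p_2$ terms (both equal $\var{\psi(U,V)}$ once you know $\psi(u,v)=\psi(v,u)$) to get $(r-1)!^2\Sigma_{\pi,\pi}^{p_1,p_2}\ge(1-p_1+p_2)\var{\psi(U,V)}$, after checking $\cov{\psi(U,V)}{\psi(V,W)}=\var{\mean{\psi(U,V)\mid V}}\ge0$ just as you do. (You should note explicitly that $1-p_1-p_2\ge0$; this comes from $t_1^{(n)}+2t_2^{(n)}\le n$.)

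The difference is in the nonconstancy step. The paper does not attempt a corner analysis; instead it proves a separate lemma (Lemma~\ref{lem: regularity psi with fixed points}) showing that $\psi_\pi^{p_1}$ is $(r-1)$-Lipschitz on $\carre$ and that its restriction to the boundary segment $\{0\}\times[0,1]$ is an explicit polynomial in $v$ whose lowest-degree term is $c\,v^{\pi(1)-1}$ with $c>0$ whenever $p_1<1$, forcing nonconstancy for $r\ge2$. This avoids all case-splitting on the structure of $\pi$.

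Your corner approach can be made to work, but two points deserve comment. First, your worry about continuity is unfounded: the marginals of $p_1\Leb_\Delta+(1-p_1)\Leb_{\carre}$ are both $\Unif{[0,1]}$, so moving $z$ by $\epsilon$ changes $\perm{\hat Z_1,\dots,\hat Z_{r-1},z}$ with probability at most $(r-1)\epsilon$ in each coordinate; hence $\psi_\pi^{p_1}$ is genuinely Lipschitz, and corner limits are unproblematic. Second, your sketch of ``two corners give different pattern probabilities'' breaks down for patterns like $\pi=2143$ where $\pi(1)\notin\{1,r\}$ and $\pi(r)\notin\{1,r\}$: there all four corner limits are $0$. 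The argument still goes through (zero at a corner, positive integral since $p_1<1$, continuity, hence nonconstant), but the phrasing ``two reduced pattern-probabilities are provably distinct'' is not the right formulation. A cleaner version of your route: for any $\pi\in\S_r$ with $r\ge2$, at least one of the four corners is incompatible (since $\pi(1)$ takes a single value), so $\psi_\pi^{p_1}$ vanishes there; and $\int_{\carre}\psi_\pi^{p_1}\,d\Leb>0$ because with probability $(1-p_1)^{r-1}>0$ all $\hat Z_i$ lie off the diagonal; continuity then finishes. This is shorter than the paper's polynomial computation and uses no case analysis on $\pi$.
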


When fixed points vanish, i.e.~when $p_1=0$, \Cref{th: pattern normality t-cyclic with fixed points} greatly simplifies.
This allows us to prove non-degeneracy for any non-trivial pattern and to compute the dimension spanned by the limiting Gaussian variable, as in \Cref{th: pattern counts in uniform} of \cite{JNZ15}.
We use the notation of \Cref{th: pattern normality t-cyclic with fixed points} and drop the index $p_1$ when it is null.

\begin{theorem}\label{th: pattern normality t-cyclic w/o fixed points}
    Let $(t^{(n)})_n$ be a sequence of cycle types of size $n$ such that $t_1^{(n)} = o(\sqrt{n})$ and $2t_2^{(n)} = n p_2 + o(n)$ as $n\to\infty$, for some $p_2\in[0,1]$.
    For each $n$, let $\tau_n$ be a uniform $t^{(n)}$-cyclic permutation.
    Then for any $r\in\N^*$, we have the following convergence in distribution:
    \begin{equation*}
        \left( \frac{X_\pi(\tau_n) - \binom{n}{r}\frac{1}{r!}}{n^{r-1/2}} \right)_{\pi\in\S_r}
        {\cv{n}{\infty}}
        \Normal{0}{\Sigma^{p_2}}
    \end{equation*}
    where for any $\pi,\rho\in\S_r$, if $U,V$ are i.i.d.~$\Unif{[0,1]}$ variables:
    \begin{equation*}
        (r\m1)!^2\, \Sigma_{\pi,\rho}^{p_2} = \cov{\psi_\pi(U,V)}{\psi_\rho(U,V)} + p_2\cov{\psi_\pi(U,V)}{\psi_\rho(V,U)} .
    \end{equation*}
    The matrix $\Sigma^{p_2}$ has rank $(r\m1)^2$ for any $p_2<1$, and rank $r(r\m1)/2$ for $p_2=1$.
    Moreover, $\Sigma_{\pi,\pi}^{p_2}>0$ for any $p_2\in[0,1]$ and $\pi\in\S_r$, $r\ge2$.
\end{theorem}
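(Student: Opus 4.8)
The plan is to derive Theorem~\ref{th: pattern normality t-cyclic w/o fixed points} as the $p_1=0$ specialization of Theorem~\ref{th: pattern normality t-cyclic with fixed points}, whose proof (via the $U$-statistics method of Hoeffding, following \cite{JNZ15}) I assume is already in place; the work here is the analysis of the limiting covariance matrix $\Sigma^{p_2}$. First I would record that when $p_1=0$ the sampling measure is just $\Leb_\carre$, so $\hat Z_1,\dots,\hat Z_r$ are i.i.d.\ uniform on the square, $\mu_\pi=1/r!$ for every $\pi$ (by symmetry), and the formula of Theorem~\ref{th: pattern normality t-cyclic with fixed points} collapses to the stated two-term expression, the $p_1$-term and the last ``three independent variables'' term vanishing because $\cov{\psi_\pi(U,V)}{\psi_\rho(V,W)}=0$: indeed, conditionally on $V$, the two variables $\psi_\pi(U,V)$ and $\psi_\rho(V,W)$ are independent with conditional means that are constant in $V$ (again by the uniform symmetry, $\E[\psi_\pi(U,V)\mid V]=1/r!$ for a.e.\ $V$), so the covariance is zero. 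This reduces everything to understanding the bilinear form $B_0(\pi,\rho):=\cov{\psi_\pi(U,V)}{\psi_\rho(U,V)}$ and its ``transposed'' companion $B_1(\pi,\rho):=\cov{\psi_\pi(U,V)}{\psi_\rho(V,U)}$, with $(r-1)!^2\,\Sigma^{p_2}_{\pi,\rho}=B_0(\pi,\rho)+p_2 B_1(\pi,\rho)$.

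Next I would set up a Hilbert-space picture. Let $H:=L^2(\carre)\ominus\text{constants}$, write $f_\pi:=\psi_\pi-1/r!\in H$, and introduce the two involutions on functions of a point $z=(x,y)$: the transpose $T$ sending $(x,y)\mapsto(y,x)$, and the inversion coming from $\pi\mapsto\pi^{-1}$. The key structural fact, which I would prove directly from the definition of $\perm{\hat Z_1,\dots,\hat Z_{r-1},z}$, is the symmetry $\psi_{\pi^{-1}}(x,y)=\psi_\pi(y,x)$: conjugating the whole point configuration by the coordinate swap turns $\perm{\cdot}$ into its inverse and swaps each point's coordinates, and the uniform law is invariant under this swap. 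Hence $f_{\pi^{-1}}=T f_\pi$, so $B_1(\pi,\rho)=\langle f_\pi, T f_\rho\rangle=\langle f_\pi, f_{\rho^{-1}}\rangle=B_0(\pi,\rho^{-1})$, and $\Sigma^{p_2}_{\pi,\rho}=(r-1)!^{-2}\big(\langle f_\pi,f_\rho\rangle + p_2\langle f_\pi,f_{\rho^{-1}}\rangle\big)$. In particular $\Sigma^{p_2}$ is (a scalar multiple of) the Gram matrix of the vectors $g_\pi:=f_\pi$ under the twisted inner product $\langle\!\langle u,v\rangle\!\rangle:=\langle u,v\rangle+p_2\langle u,Tv\rangle$ on $\mathrm{span}\{f_\pi\}$, or equivalently the Gram matrix of $\{f_\pi\}$ for the operator $I+p_2 T$ restricted to that span. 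Since $T$ is an orthogonal involution with eigenvalues $\pm1$, the operator $I+p_2 T$ is positive semidefinite, positive definite for $p_2<1$, and has kernel equal to the $(-1)$-eigenspace of $T$ when $p_2=1$.

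From here the rank and non-degeneracy statements follow by combining this with the known $p_2=0$ picture of \cite{JNZ15}. For $p_2<1$: $I+p_2T$ is invertible and self-adjoint, so $\mathrm{rank}\,\Sigma^{p_2}=\dim\mathrm{span}\{f_\pi:\pi\in\S_r\}$, which is exactly $\mathrm{rank}\,\Sigma^{0}=(r-1)^2$ by Theorem~\ref{th: pattern counts in uniform}/\cite{JNZ15} (the span does not depend on $p_2$). For $p_2=1$: $\mathrm{rank}\,\Sigma^{1}=\dim\big((I+T)\,\mathrm{span}\{f_\pi\}\big)=\dim\mathrm{span}\{f_\pi+f_{\pi^{-1}}\}=\dim\mathrm{span}\{f_\pi+Tf_\pi\}$; I would identify this span with the projection of $\mathrm{span}\{f_\pi\}$ onto the $(+1)$-eigenspace of $T$, and show its dimension is $r(r-1)/2$. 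Two natural routes: either re-run the explicit $U$-statistic/eigenfunction computation of \cite{JNZ15} keeping track of which spanning vectors are $T$-symmetric (the relevant functions there are products of one-variable ``indicator of coordinate $<$ threshold'' type functions in $x$ and in $y$; the symmetrization under $T$ collapses the count from $(r-1)^2$ independent modes to the symmetric ones, numbering $\binom{r-1}{2}+(r-1)=\binom{r}{2}$); or, more cheaply, use that for $p_2=1$ the $U$-statistic is literally the one governing uniform \emph{involution} pattern counts (the $t^{(n)}$ with $t_2^{(n)}\sim n/2$, $t_1^{(n)}=o(\sqrt n)$ case is a fixed-point-free uniform involution), whose limiting covariance rank is already known from \cite{BR01}-type analysis or from \cite{HR22} to be $r(r-1)/2$. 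Finally, the strict positivity $\Sigma^{p_2}_{\pi,\pi}>0$ for all $p_2\in[0,1]$, $r\ge2$: for $p_2<1$ it is immediate since $I+p_2T\succ0$ and $f_\pi\ne0$ (as $\Sigma^0_{\pi,\pi}>0$ from \cite{JNZ15}); for $p_2=1$ it says $f_\pi$ is not in the $(-1)$-eigenspace of $T$, i.e.\ $f_\pi+Tf_\pi\ne0$, equivalently $\psi_\pi+\psi_{\pi^{-1}}$ is non-constant — this is exactly the case $\pi=\pi^{-1}$ of Proposition~\ref{prop: non degenerate involution pattern} when $\pi$ is an involution, and for general $\pi$ one checks $\psi_\pi+\psi_{\pi^{-1}}$ is non-constant by exhibiting a point $z$ where it exceeds $2/r!$ (e.g.\ place $z$ near a corner forcing many of the $r$ points into a configuration realizing $\pi$ or $\pi^{-1}$).

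The main obstacle I expect is the $p_2=1$ rank computation $r(r-1)/2$: unlike the $p_2<1$ case, which is a soft invertibility argument on top of \cite{JNZ15}, here one must genuinely understand the $T$-symmetric part of the span $\{f_\pi\}$, and the cleanest path is probably to borrow the explicit orthogonal-eigenfunction decomposition of the pattern $U$-statistic from \cite{JNZ15} and verify that the swap $T$ acts on that basis by a permutation whose symmetric fixed-space has the asserted dimension; the bookkeeping of which basis functions survive symmetrization is the delicate routine part. Identifying the $p_2=1$ model with the fixed-point-free uniform involution, and invoking the already-known rank there, would be a valid shortcut if one is willing to cite \cite{HR22} or \cite{BR01} for that value.
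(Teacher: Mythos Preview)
Your approach is essentially the paper's: both view $\Sigma^{p_2}$ as a Gram-type matrix for the centered functions $f_\pi=\psi_\pi-1/r!$ under a bilinear form built from the transpose involution $T$, and both reduce the rank question to the span of $\{f_\pi\}$ (the paper reaches the identical Gram matrix via the parametrization $\alpha^2+\beta^2=1$, $2\alpha\beta=p_2$ and $\chi_\pi=\alpha f_\pi+\beta Tf_\pi$, which is just the square root of your $I+p_2T$). Your $p_2<1$ rank argument and your reduction of the vanishing covariance term are the same as the paper's.

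The part you leave under-specified is exactly where the paper does the real work: the $p_2=1$ boundary. The paper does \emph{not} cite \cite{BR01} or \cite{HR22} for the rank $r(r-1)/2$ (and neither reference supplies it directly; \cite{BR01} is about monotone subsequences, not general patterns), nor does it use a soft ``corner'' argument for non-degeneracy. Instead it proves the explicit formula $\psi_\pi(u,v)=\tfrac{1}{(r-1)!}\sum_i g_i(u)g_{\pi(i)}(v)$ with $g_j(w)=\binom{r-1}{j-1}w^{j-1}(1-w)^{r-j}$, from which $f_\pi$ corresponds to the matrix $A^\pi=(\delta_{j,\pi(i)}-1/r)_{i,j}$ in the linearly independent basis $(g_i\otimes g_j)$. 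Then $\chi_\pi\leftrightarrow \alpha A^\pi+\beta A^{\pi^{-1}}$, and for $\alpha=\beta$ the span is that of \emph{symmetric} matrices with zero row and column sums, which has dimension $r(r-1)/2$ by direct count. The same explicit formula makes the non-degeneracy at $p_2=1$ clean: one checks on the polynomial level that $\psi_\pi(u,v)+\psi_\pi(v,u)$ is non-constant for $r\ge2$ by looking at its lowest-degree term on $u=0$ (forcing $\pi(1)=1$) and on $u=1$ (forcing $\pi(r)=1$), a contradiction. Your ``place $z$ near a corner'' sketch is aiming at the same fact but would need this kind of concrete computation to be a proof; and your first proposed route for the rank (``re-run \cite{JNZ15} keeping track of $T$-symmetric modes'') is precisely the matrix computation above once you have the formula for $\psi_\pi$.
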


\begin{rem}
    In parallel to this work, F\'eray and Kammoun \cite{FK23} were able to prove a more general result than \Cref{th: pattern normality t-cyclic with fixed points} and \Cref{prop: non degenerate involution pattern}, using the method of {\em weighted} dependency graphs.
    They could prove non-degeneracy for any (classical) non-trivial pattern, i.e.~that $\Sigma_{\pi,\pi}^{p_1,p_2} >0$ for any $p_1<1$ and $\pi\in\S_r$, $r\ge2$.
    However, bounds on the speed of convergence as in our \Cref{th: pattern Stein} seem to be out of reach for their methods.
\end{rem}

\begin{rem}
    Our method of (non-weighted) dependency graphs also allows the application of \cite[Theorem~2.1]{J04} to \Cref{eq: write X as sum of partly dependent} for large deviation estimates.
    Namely, for any $t>0$:
    \begin{equation*}
        \prob{\big. \card{X_\pi(\tau_n)-\mean{X_\pi(\tau_n)}} \ge t}
        \le 2\exp\left( \frac{-2 (r-1)!}{3} \frac{t^2}{n^{2r-1}} \right)
    \end{equation*}
\end{rem}

\begin{rem}
    A popular generalization of patterns is that of {\em vincular patterns}, where we require some positions of the pattern to be adjacent.
    Descents, peaks and valleys are classical examples of vincular patterns.
    Asymptotic normality of vincular pattern counts in uniform permutations was established in \cite{H18}.
    Regarding conjugation-invariant permutations, the convergence results of \cite{FK23,HR22,K22} all hold within the general context of vincular patterns.
    Unfortunately, it seems unlikely that our methods would work for vincular patterns.
\end{rem}

The results of this section are proved in \Cref{sec: proof pattern}.

\section{Properties of the geometric construction}
\label{sec: prelim}

In this section, we present several interesting facts about the geometric construction of \Cref{lem: geometric construction t-cyclic}.
These will be essential to prove the results of this paper, generally by reducing the study of conjugation-invariant permutations to uniformly random permutations.
We use the notation of \Cref{sec: geometric construction,sec: notation}.

\begin{lem}\label{lem: decomposition t-cyclic}
    Let $t$ be a cycle type of size $n$ such that $t_1=0$, and let $\pts$ be a geometric construction.
    Then it can be decomposed into an a.s.~disjoint union
    $$\pts = \pts^{(1)}\cup\pts^{(2)}\cup\pts^{(3)}$$
    where each $\pts^{(i)}$ is a family of i.i.d.~uniform points in $\carre$, of (deterministic) sizes $n_1,n_2,n_3$ bounded between $n/3\m1$ and $n/3\p1$.
\end{lem}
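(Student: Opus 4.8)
The plan is to read off the decomposition from the \emph{dependency graph} $\L_t$ of \Cref{sec: notation}. Since $t_1=0$, this graph is the disjoint union, over the cycles of $\tau$, of one cycle graph $C_p$ for each $p$-cycle of $\tau$ (with the length-$2$ ``cycle'' read as a single edge), and it has $\card{\Vert_t}=n$ vertices. The key point is the following: if $W\subseteq\Vert_t$ is an \emph{independent set} of $\L_t$, then $\{Z_w:w\in W\}$ is a family of i.i.d.\ $\Unif{\carre}$ variables. Indeed, in the notation of \Cref{lem: geometric construction t-cyclic} each point reads $Z_w=(U_w,U_{\s(w)})$; since every cycle has length $p\ge2$ we have $w\neq\s(w)$, so $Z_w\sim\Unif{\carre}$; and two distinct non-adjacent vertices $w,w'$ satisfy $\{w,\s(w)\}\cap\{w',\s(w')\}=\emptyset$ — because an edge of $\L_t$ is exactly a pair $\{v,\s(v)\}$ and $\s$ is a bijection — so $Z_w$ and $Z_{w'}$ are built from disjoint subfamilies of the i.i.d.\ variables $(U_i)_{i\in\Vert_t}$, hence are independent.

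It therefore suffices to partition $\Vert_t$ into three independent sets of $\L_t$ of deterministic sizes $n_1,n_2,n_3$ lying in $[n/3-1,\,n/3+1]$, and to set $\pts^{(i)}$ to be the set of points indexed by the $i$-th part. In other words, the lemma reduces to a purely combinatorial claim: \emph{a disjoint union of cycle graphs (with the length-$2$ one being an edge), on $n$ vertices in total, has a proper $3$-colouring all of whose colour classes have size within $1$ of $n/3$.}

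For this I would first check that one cycle $C_p$, $p\ge2$, has a proper $3$-colouring whose three colour-class sizes each equal $\lfloor p/3\rfloor$ or $\lceil p/3\rceil$ (so they pairwise differ by at most $1$): colour the vertices cyclically $1,2,3,1,2,3,\dots$ and, depending on $p\bmod 3$, adjust the last one or two vertices to avoid a clash with the first; up to relabelling the colours, any assignment of these three sizes to the colours can be realised. Then I would build the global colouring greedily: list the cycles of $\tau$ and, starting from running class sizes $(S_1,S_2,S_3)=(0,0,0)$, process each cycle by giving its largest part to the currently smallest $S_i$, its middle part to the middle one, and its smallest part to the largest one. Because the three parts of a single cycle pairwise differ by at most $1$, a one-line inspection of the three pairwise differences (each one turns into a sum of a term in $\{-1,0\}$ and a term in $\{0,1\}$) shows that the invariant $\max_i S_i-\min_i S_i\le1$ is preserved. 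At the end $S_1+S_2+S_3=n$ with spread at most $1$, so each $S_i\in\{\lfloor n/3\rfloor,\lceil n/3\rceil\}\subseteq[n/3-1,\,n/3+1]$. The a.s.\ disjointness of the three sets is automatic, since the $U_i$'s — and hence the points $Z_i$ — are a.s.\ pairwise distinct.

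The only real difficulty is the global balancing: an arbitrary proper $3$-colouring is hopeless here (for instance a $2$-colouring of every even cycle, or the forced partition of each edge, leaves a colour empty), so all the content is in the round-robin assignment above together with the ``spread $\le1$'' invariant. The small but essential point not to skip is that the three parts produced for a single cycle always lie within $1$ of one another, which is precisely what propagates the invariant.
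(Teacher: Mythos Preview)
Your proof is correct and follows essentially the same approach as the paper: both recognise that the key requirement is that no two adjacent vertices of the dependency graph $\L_t$ land in the same part, and both produce a balanced proper $3$-colouring of each cycle, then balance across cycles. Your greedy round-robin argument with the ``spread $\le 1$'' invariant is a clean alternative to the paper's case analysis by $p\bmod 3$ (surpluses for $p\equiv 1$, deficits for $p\equiv 2$, then a final cancellation), but the underlying idea is identical.
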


\begin{proof}
    The idea is to define the three subsets $\pts^{(1)},\pts^{(2)},\pts^{(3)}$ by assigning them the points of each cycle in an alternating order.
    The rule for ensuring the i.i.d.~uniformity property is that whenever some point $Z_{p,k}^l$ lies in $\pts^{(i)}$, then the adjacent points $Z_{p,k}^{l\pm1}$ shall not belong to $\pts^{(i)}$.
    To construct the $\pts^{(i)}$'s in a balanced way while satisfying this rule, we do the following:
    \begin{itemize}
        \item For each $p$ such that $p=0\!\mod3$ and $1\le k\le t_p$, the points of $\pts_{p,k}$ can be put in the three subsets by simply alternating:
        \begin{equation*}
            Z_{p,k}^1\in\pts^{(1)} , 
            Z_{p,k}^2\in\pts^{(2)} , 
            Z_{p,k}^3\in\pts^{(3)} , 
            Z_{p,k}^4\in\pts^{(1)} , 
            \dots ,
            Z_{p,k}^p\in\pts^{(3)} .
        \end{equation*}
        Then each $\pts_{p,k}\cap\pts^{(i)}$ has size $p/3$ and contains i.i.d.~uniform points.
        \item For each $p$ such that $p=1\!\mod3$ and $1\le k\le t_p$, the points of $\pts_{p,k}$ can be put in the three subsets by simply alternating, but then one subset will get a surplus. 
        Thus two of the subsets $\pts_{p,k}\cap\pts^{(i)}$ have size $(p\m1)/3$, the other one has size $1+(p\m1)/3$, and they each contain i.i.d.~uniform points.
        Namely if $(i_1,i_2,i_3)$ is a permutation of $(1,2,3)$ then the choice
        \begin{equation*}
            Z_{p,k}^1\in\pts^{(i_1)} , 
            Z_{p,k}^2\in\pts^{(i_2)} , 
            Z_{p,k}^3\in\pts^{(i_3)} , 
            Z_{p,k}^4\in\pts^{(i_1)} , 
            \dots ,
            Z_{p,k}^{p-1}\in\pts^{(i_3)} ,
            Z_{p,k}^p\in\pts^{(i_2)} 
        \end{equation*}
        yields a surplus for $\pts^{(i_2)}$.
        For each such $(p,k)$ we can choose which subset gets a surplus in order to balance out those surpluses over the $(p,k)$'s. 
        Consequently the sets
        \begin{equation*}
            \bigcup_{p=1\hspace{-.7em}\mod3 , 1\le k\le t_p}\pts_{p,k}\cap\pts^{(i)}
        \end{equation*}
        have sizes of the form $m+\epsilon_i$ for some integer $m$ and $\epsilon_i\in\{0,1\}$, $i=1,2,3$.
        \item For each $p=2\!\mod3$ and $1\le k\le t_p$ a similar reasoning can be made, except this time one subset will get a deficit.
        By balancing out the deficits as before, we can construct the sets 
        \begin{equation*}
            \bigcup_{p=2\hspace{-.7em}\mod3 , 1\le k\le t_p}\pts_{p,k}\cap\pts^{(i)}
        \end{equation*}
        in such a way that they each contain i.i.d.~uniform points and have sizes of the form $m'-\delta_i$ for some integer $m'$ and $\delta_i\in\{0,1\}$, $i=1,2,3$.
    \end{itemize}
    In the end, by balancing out the surpluses with the deficits, the subsets $\pts^{(i)}$ have sizes $m''+\eta_i$ for some integer $m''$ and $\eta_i\in\{0,1\}$, $i=1,2,3$, and each contain i.i.d.~uniform points.
    The lemma follows.
\end{proof}

Thanks to \Cref{lem: decomposition t-cyclic}, we can apply Chernoff-type bounds to control the number of points in any zone of the plane.
Recall that $\check\pts := \pts\setminus\Delta$ is a geometric construction for $\check t:=(0,t_2,\dots)$ of size $\check n:=n-t_1$.

\begin{cor}\label{cor: controlling number of points t-cyclic}
    Let $t$ be a cycle type of size $n$ and $\pts$ be a geometric construction.
    If $C$ is a measurable subset of $\carre$ then $\card{\check\pts\cap C}$ is distributed like a (correlated) sum of $\Binomial{n_i}{\Leb(C)}$ variables, where $\card{n_i - \check n /3} \le 1$ for $i=1,2,3$.
    In particular for any $\delta>0$ there exists a constant $c>0$, which depends only on $\delta$ and $\Leb(C)$, such that:
    \begin{equation*}
        \prob{\card{\frac{1}{\check n}\card{\check\pts\cap C} - \Leb(C)} > \delta} \le \exp\left({-c\check n}\right) .
    \end{equation*}
\end{cor}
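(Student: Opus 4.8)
The plan is to deduce Corollary~\ref{cor: controlling number of points t-cyclic} directly from Lemma~\ref{lem: decomposition t-cyclic} together with a standard Chernoff bound. First I would reduce to the fixed-point-free case: since $\check\pts := \pts\setminus\Delta$ is itself a geometric construction for $\check t = (0,t_2,\dots,t_n)$ of size $\check n = n-t_1$, and $\check t_1 = 0$, I may apply Lemma~\ref{lem: decomposition t-cyclic} to $\check\pts$. This gives an a.s.~disjoint decomposition $\check\pts = \check\pts^{(1)}\cup\check\pts^{(2)}\cup\check\pts^{(3)}$ where each $\check\pts^{(i)}$ consists of $n_i$ i.i.d.~uniform points in $\carre$, with $|n_i - \check n/3|\le 1$ for $i=1,2,3$.

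Next I would observe that for a fixed measurable $C\subseteq\carre$, conditionally on nothing, the count $|\check\pts^{(i)}\cap C|$ is a sum of $n_i$ i.i.d.~Bernoulli indicators $\One{Z\in C}$ with success probability $\Leb(C)$, hence $|\check\pts^{(i)}\cap C|\sim\Binomial{n_i}{\Leb(C)}$. Summing over $i$ and using that the three subsets partition $\check\pts$, we get $|\check\pts\cap C| = \sum_{i=1}^3 |\check\pts^{(i)}\cap C|$, a sum of three Binomial variables which are correlated (the decomposition of each cycle couples them) but each with the stated parameters. This establishes the first assertion of the corollary.

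For the concentration bound, I would write $\frac1{\check n}|\check\pts\cap C| - \Leb(C) = \sum_{i=1}^3 \frac1{\check n}\big(|\check\pts^{(i)}\cap C| - n_i\Leb(C)\big) + \Leb(C)\big(\frac{\sum_i n_i}{\check n} - 1\big)$, and note the last term vanishes since $\sum_i n_i = \check n$. Then on the event that $\big|\frac1{\check n}|\check\pts\cap C| - \Leb(C)\big| > \delta$, at least one of the three summands exceeds $\delta/3$ in absolute value, so by a union bound it suffices to control each $\prob{|\,|\check\pts^{(i)}\cap C| - n_i\Leb(C)| > \delta\check n/3}$. Since $n_i \ge \check n/3 - 1$, a deviation of $\delta\check n/3$ from the mean is a deviation of at least a fixed fraction of $n_i$ (say $\ge \delta n_i/3$ once $\check n$ is large, absorbing the small-$\check n$ range into the constant), so the multiplicative Chernoff bound for Binomial variables gives $\prob{|\,|\check\pts^{(i)}\cap C| - n_i\Leb(C)| > \delta\check n/3} \le \exp(-c'\check n)$ for a constant $c'>0$ depending only on $\delta$ and $\Leb(C)$. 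Taking $c$ slightly smaller than $c'$ to absorb the factor $3$ from the union bound and any finitely many small values of $\check n$ yields the claim.

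This argument is essentially routine once Lemma~\ref{lem: decomposition t-cyclic} is available; there is no serious obstacle. The only point requiring mild care is the bookkeeping around the case $\check n$ small (where $|n_i - \check n/3|\le 1$ is weak relative to $\check n$) — but this is handled by noting the inequality is trivially true, after adjusting $c$, for any bounded range of $\check n$, since probabilities are at most $1$. One should also make sure the correlation between the $\check\pts^{(i)}$ plays no role: the union bound over $i$ requires no independence, and the Chernoff estimate is applied to each $|\check\pts^{(i)}\cap C|$ marginally, each being a genuine Binomial by construction.
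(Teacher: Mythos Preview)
Your proposal is correct and matches the paper's intended argument exactly: the paper does not spell out a proof of this corollary but simply states that ``thanks to Lemma~\ref{lem: decomposition t-cyclic}, we can apply Chernoff-type bounds,'' which is precisely what you do. Your handling of the reduction to $\check\pts$, the union bound over the three pieces, and the bookkeeping for small $\check n$ are all appropriate and nothing further is needed.
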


The following two lemmas informally state that conjugation-invariant random permutations are, in some sense, ``locally uniform''.

\begin{lem}\label{lem: iid outside diagonal t-cyclic}
    Let $t$ be a cycle type and $\pts$ be a geometric construction. 
    If $I,J$ are two essentially disjoint intervals of $[0,1]$ then the set $\pts\cap (I\times J)$ is a (random-sized) family of i.i.d.~uniform points in $I\times J$.
\end{lem}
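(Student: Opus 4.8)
\textbf{Proof plan for \Cref{lem: iid outside diagonal t-cyclic}.}
The plan is to exploit the explicit description of the geometric construction in terms of the i.i.d.\ family $\left(U_{p,k}^l\right)_{(p,k,l)\in\Vert_t}$ and to understand, for a point $Z_{p,k}^l=\left(U_{p,k}^l,U_{p,k}^{l\p1}\right)$, exactly when it can land in the rectangle $I\times J$. First I would fix essentially disjoint intervals $I,J\subseteq[0,1]$, so that $\Leb(I\cap J)=0$, and observe that $Z_{p,k}^l\in I\times J$ forces $U_{p,k}^l\in I$ and $U_{p,k}^{l\p1}\in J$; since $I$ and $J$ overlap only on a null set, almost surely no coordinate $U_{p,k}^m$ lies in both $I$ and $J$. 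Consequently, conditionally on the event that $Z_{p,k}^l\in I\times J$, the variables $U_{p,k}^l$ and $U_{p,k}^{l\p1}$ are \emph{not} equal, and more importantly the indices $(p,k,l)$ and $(p,k,l\p1)$ for which $Z$-points fall in $I\times J$ involve pairwise disjoint pairs of coordinates: if $Z_{p,k}^l$ and $Z_{p,k}^{l'}$ both lie in $I\times J$ with $l\ne l'$, then $\{l,l\p1\}\cap\{l',l'\p1\}=\emptyset$ almost surely, because a shared index $m$ would have to satisfy $U_{p,k}^m\in I$ (from one point) and $U_{p,k}^m\in J$ (from the other), impossible off a null set.

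The key step is then to argue that, conditionally on \emph{which} pairs $\left(U_{p,k}^l,U_{p,k}^{l\p1}\right)$ land in $I\times J$ — equivalently, conditionally on the random index set $S:=\left\{(p,k,l)\in\Vert_t : Z_{p,k}^l\in I\times J\right\}$ — the corresponding points are i.i.d.\ uniform on $I\times J$. For this I would condition in two stages. First, condition on the full configuration of ``coarse'' membership data: for each coordinate $U_{p,k}^m$, whether it lies in $I$, in $J$, or in neither (these events, being determined by disjoint sub-intervals, partition $[0,1]$ up to a null set, after possibly enlarging $I\cup J$ by a null set to make them a genuine partition on $[0,1]$; alternatively simply condition on $\One{U_{p,k}^m\in I}$ and $\One{U_{p,k}^m\in J}$ for all $(p,k,m)$). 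This conditioning determines the set $S$ exactly, since $(p,k,l)\in S$ iff $U_{p,k}^l\in I$ and $U_{p,k}^{l\p1}\in J$. Given this coarse data, each coordinate $U_{p,k}^m$ is conditionally uniform on whichever of $I$, $J$, or $[0,1]\setminus(I\cup J)$ it was placed in, and — crucially — these conditioned coordinates remain \emph{independent} across all $(p,k,m)$, since the $U$'s were independent to begin with and we conditioned coordinatewise. By the previous paragraph, the pairs $\left\{\left(U_{p,k}^l,U_{p,k}^{l\p1}\right):(p,k,l)\in S\right\}$ use pairwise disjoint coordinates (the ``$l$'' coordinates all landed in $I$, the ``$l\p1$'' coordinates all landed in $J$, and no coordinate is reused), so these pairs are conditionally independent, with each pair consisting of two independent coordinates, the first uniform on $I$ and the second uniform on $J$; that is, each point $Z_{p,k}^l$, $(p,k,l)\in S$, is conditionally uniform on $I\times J$, and they are jointly independent.

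To conclude, I would note that this conditional law — ``given $S$, the family $\left(Z_{p,k}^l\right)_{(p,k,l)\in S}$ consists of $\card S$ i.i.d.\ $\Unif{I\times J}$ points'' — does not depend on $S$ beyond its cardinality, so integrating out we obtain that $\pts\cap(I\times J)$ is, as claimed, a random-sized family of i.i.d.\ uniform points in $I\times J$ (equivalently, conditionally on $\card{\pts\cap(I\times J)}=m$ it is an i.i.d.\ uniform $m$-sample). The main obstacle is purely bookkeeping rather than conceptual: one must carefully verify the ``disjoint coordinates'' claim — namely that distinct points of $\pts$ falling into $I\times J$ never share one of the two coordinates $U_{p,k}^l$ or $U_{p,k}^{l\p1}$ — which relies precisely on the hypothesis that $I$ and $J$ are essentially disjoint (so that an event of the form $U_{p,k}^m\in I\cap J$ is null), and one must handle the mild nuisance that the three regions $I$, $J$, $[0,1]\setminus(I\cup J)$ cover $[0,1]$ only up to a null set. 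Both issues are resolved by discarding a single null event, after which the coordinatewise conditioning argument goes through verbatim.
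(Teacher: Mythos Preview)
Your proof is correct and follows essentially the same approach as the paper: both arguments hinge on the observation that two points $Z_{p,k}^l$ and $Z_{p,k}^{l'}$ landing in $I\times J$ cannot share a coordinate $U_{p,k}^m$ (since such a coordinate would have to lie in both $I$ and $J$), and then condition on which points fall in the rectangle to conclude they are i.i.d.\ uniform. The only cosmetic difference is that the paper conditions directly on the index set $S=\{(p,k,l):Z_{p,k}^l\in I\times J\}$ (working cycle by cycle), whereas you condition on the finer ``coarse membership'' data of each coordinate; your finer conditioning makes the independence of the resulting points a bit more transparent, but the underlying idea is the same.
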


\begin{proof}
    It suffices to prove the lemma for each set $\pts_{p,k}$.
    Fix $(p,k)$.
    Note that if some point $\big(U_{p,k}^l,U_{p,k}^{l+1}\big)$ is in $I\times J$ then a.s.~neither $\big(U_{p,k}^{l-1},U_{p,k}^{l}\big)$ nor $\big(U_{p,k}^{l+1},U_{p,k}^{l+2}\big)$ are in $I\times J$, where all exponents are taken modulo $p$ in $[p]$.
    Moreover, $\big(U_{p,k}^l,U_{p,k}^{l+1}\big)$ is independent of
    $$\pts_{p,k}\setminus\left\{ \big(U_{p,k}^{l-1},U_{p,k}^{l}\big) , \big(U_{p,k}^{l},U_{p,k}^{l+1}\big) , \big(U_{p,k}^{l+1},U_{p,k}^{l+2}\big) \right\} .$$
    Thus for any $l_1,\dots,l_j$, the event 
    \begin{equation*}
        \left\{\Bigg. \pts_{p,k}\cap(I\times J) = \left\{ \big(U_{p,k}^{l_1},U_{p,k}^{l_1+1}\big) ,\dots, \big(U_{p,k}^{l_j},U_{p,k}^{l_j+1}\big) \right\} \right\}
    \end{equation*}
    is either negligible or essentially rewrites as 
    \begin{equation*}
        \big(U_{p,k}^{l_1},U_{p,k}^{l_1+1}\big) \in I\!\times\! J
        \,,\dots,\, 
        \big(U_{p,k}^{l_j},U_{p,k}^{l_j+1}\big) \in I\!\times\! J
        \;\;,\;\;
        (I\!\times\! J)\cap\pts_{p,k}\setminus\left\{ \big(U_{p,k}^{l_1-1},U_{p,k}^{l_1}\big) ,\dots, \big(U_{p,k}^{l_j+1},U_{p,k}^{l_j+2}\big) \right\} = \emptyset .
    \end{equation*}
    Conditionally on this event when it is non-negligible, these points are i.i.d.~uniform in $I\times J$.
    This concludes the proof.
\end{proof}

\begin{lem}\label{lem: conditionally on coordinates}
    Let $t$ be a cycle type of size $n$ and $\pts = \left\{\big. (U_i,U_{\s(i)}) \,:\, i\in\Vert_t \right\}$ be a geometric construction.
    Then conditionally given the unordered set $\{U_i \,:\, i\in\Vert_t\}$, the following holds a.s.:
    \begin{enumerate}
        \item $\perm{\pts}$ is a uniform $t$-cyclic permutation.
        \item If $I,J$ are two essentially disjoint intervals of $[0,1]$ then, conditionally given $\card{\pts\cap(I\times J)}$, the permutation $\perm{\pts\cap(I\times J)}$ is uniformly random of size $\card{\pts\cap(I\times J)}$.
    \end{enumerate}
\end{lem}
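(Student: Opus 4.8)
The plan is to encode both claims through the ``rank map'' $\rho:\Vert_t\to[n]$ already used in the proof of \Cref{lem: geometric construction t-cyclic}, defined by letting $\rho(i)$ be the relative order of $U_i$ in the family $(U_j)_{j\in\Vert_t}$. A.s.\ the $U_i$'s are pairwise distinct, and conditioning on their unordered set $\{U_i:i\in\Vert_t\}$ is precisely the same as making $\rho$ a uniformly random bijection: this is the exchangeability of the i.i.d.\ family $(U_i)$, since given the set of values the labelling is uniform. Writing $u_1<\dots<u_n$ for the ordered values, every point of $\pts$ is of the form $Z_i=(u_{\rho(i)},u_{\rho(\s(i))})$, and $\tau=\perm{\pts}$ satisfies $\tau\circ\rho=\rho\circ\s$, i.e.\ $\tau=\rho\circ\s\circ\rho^{-1}$. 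Since $\s$ acts as a $p$-cycle on $\{(p,k,l):l\in[p]\}$ for each $(p,k)$ (a fixed point when $p=1$), it has cycle type $t$; conjugating it by the uniform bijection $\rho$ therefore produces a uniform $t$-cyclic permutation. This is the first claim --- it is just the conditional rephrasing of the proof of \Cref{lem: geometric construction t-cyclic}.

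For the second claim, fix essentially disjoint intervals $I,J$ and condition on $\{U_i:i\in\Vert_t\}$, so that the values $u_1<\dots<u_n$ are frozen and the index sets $S_I:=\{k:u_k\in I\}$ and $S_J:=\{k:u_k\in J\}$ are deterministic and disjoint. Put $A:=\rho^{-1}(S_I)$ and $A':=\rho^{-1}(S_J)$; then $Z_i\in I\times J$ exactly when $i\in A$ and $\s(i)\in A'$, i.e.\ when $i\in E:=A\cap\s^{-1}(A')$, so that $\card{\pts\cap(I\times J)}=\card E$. If $e_1,\dots,e_m$ enumerates $E$ in increasing order of $\rho$-value, then $\perm{\pts\cap(I\times J)}$ is the standardization of $\bigl(\rho(\s(e_1)),\dots,\rho(\s(e_m))\bigr)$, a tuple of pairwise distinct elements of $S_J$. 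Now condition additionally on the pair $(A,A')$ and on the restriction of $\rho$ to $\Vert_t\setminus A'$: this fixes $E$ and its ordering $(e_1,\dots,e_m)$, hence the ordered tuple $\bigl(\s(e_1),\dots,\s(e_m)\bigr)$ of distinct elements of $A'$, while $\rho|_{A'}$ remains a uniform bijection from $A'$ onto $S_J$. Consequently $\bigl(\rho(\s(e_1)),\dots,\rho(\s(e_m))\bigr)$ is a uniformly random ordered $m$-tuple of distinct elements of $S_J$, whose standardization is uniform in $\S_m$. Since $\card E$ is already measurable with respect to $(A,A')$, the conditional law of $\perm{\pts\cap(I\times J)}$ given $\card E=m$ is uniform on $\S_m$, i.e.\ conditionally on $\card{\pts\cap(I\times J)}=m$ the permutation $\perm{\pts\cap(I\times J)}$ is uniform of size $m$. (As $I$ and $J$ are essentially disjoint, no diagonal point lies in $I\times J$, so one could equivalently phrase this for $\check\pts$, but the argument does not need it.)

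The routine inputs are the exchangeability fact and the conjugation fact. The one point requiring care is the separation of roles of the randomness in the second claim: the selected set $E$ and even the order in which its elements appear are correlated with $A$, $A'$ and $\rho|_A$, yet the resulting pattern depends on $\rho$ only through $\rho|_{A'}$, which stays uniform after conditioning on everything else --- this is what forces the pattern to be uniform regardless of the realized $E$. I expect this bookkeeping, together with checking that $\card E$ is measurable with respect to the conditioning $\sigma$-field used, to be the main (though modest) obstacle; everything else reduces to the classical observation that the standardization of a uniformly random ordered tuple of distinct reals is a uniform permutation.
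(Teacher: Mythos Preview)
Your proof is correct and follows the paper's approach: both re-describe the conditional law given $\{U_i\}$ via a uniform bijection $\rho$ between $\Vert_t$ and $[n]$, after which claim~1 is immediate from $\tau=\rho\circ\s\circ\rho^{-1}$. For claim~2 the executions differ slightly: the paper shows that $\prob{\forall r,\,(U_{(i_r)},U_{(j_r)})\in\pts}=\prob{\forall r,\,\rho(j_r)=\s(\rho(i_r))}$ depends only on $m$ and deduces uniformity of the induced pattern from this symmetry (the passage from ``containment probability depends only on $m$'' to ``pattern uniform given size $m$'' is left implicit and amounts to a short M\"obius step). Your conditioning on $(A',\rho|_{\Vert_t\setminus A'})$ --- which freezes $E$ and its $\rho$-ordering while leaving $\rho|_{A'}$ a uniform bijection onto $S_J$ --- reaches the same conclusion more directly and makes the source of the uniformity explicit. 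These are close variants of the same exchangeability argument rather than genuinely different routes.
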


\begin{proof}
    The conditional law of $\pts$ given $\{U_i \,:\, i\in\Vert_t\}$ can be a.s.~re-described as follows.
    Sort $\{U_i \,:\, i\in\Vert_t\}$ as $U_{(1)}<\dots<U_{(n)}$, let $\rho$ be a uniformly random bijection from $[n]$ to $\Vert_t$, then set $V_i := U_{(\rho^{-1}(i))}$ for any $i\in\Vert_t$, and finally $\pts = \left\{\big. (V_i,V_{\s(i)}) \,:\, i\in\Vert_t \right\}$.
    The first claim then follows from the same argument as in the proof of \Cref{lem: geometric construction t-cyclic}.

    For the second claim, notice that $\pts\cap(I\times J)$ is a random subset of the finite grid $\left\{\left(U_{(i)},U_{(j)}\right) : i,j\in[n]\right\}\cap(I\times J)$.
    Consider $0\le m\le n$ and indices $i_1,\dots,i_m, j_1,\dots,j_m \in [n]$ such that for each $r\in[m]$, $U_{(i_r)}\in I$ and $U_{(j_r)}\in J$.
    The sets of indices $\{i_1,\dots,i_m\}$ and $\{j_1,\dots,j_m\}$ are then a.s.~disjoint.
    Furthermore, the probability
    \begin{equation*}
        \prob{\forall r\in[m],\; \left(U_{(i_r)} , U_{(j_r)}\right)\in\pts}
        = \prob{\forall r\in[m],\; \left(V_{\rho(i_r)} , V_{\rho(j_r)}\right)\in\pts}
        = \prob{\big. \forall r\in[m],\; \rho(j_r) = \s(\rho(i_r))}
    \end{equation*}
    only depends on $i_1,\dots,i_m,j_1,\dots,j_m$ through $m$.
    This readily implies that conditionally given $\{U_i,i\in\Vert_t\}$ and $\card{\pts\cap(I\times J)}$, the permutation $\perm{\pts\cap(I\times J)}$ is uniformly random.
\end{proof}

\section{Proofs of the results on monotone subsequences}
\label{sec: proof LIS LDS}

\subsection{Decreasing subsequences}

\begin{proof}[Proof of \Cref{th: LDS t-cyclic}]
    Throughout the proof we fix $\delta>0$ and let $\pts$ be a geometric construction of $\tau_n$.
    
    \paragraph{Upper tail bound:}
    Suppose, without loss of generality, that $\delta=1/\beta$ for some integer $\beta$.
    Fix an arbitrary integer $K>16\beta^2$ and set $\Dx:=1/K$.
    Slice $\carre$ into a regular grid $\left( C_{i,j} \right)_{1\le i,j\le K}$ made of square cells
    \begin{equation*}
        C_{i,j} := [(i-1)\Dx,i\Dx]\times[(j-1)\Dx,j\Dx] .
    \end{equation*}
    Notice that any down-right path of points can only visit at most one diagonal square cell $C_{i,i}$. 
    Therefore:
    \begin{equation}\label{eq: decoupe diag LDS t-cyclic}
        \LDS(\tau_n) = \LDS(\pts) \leq \LDS\big(\widetilde\pts\big) + \max_{1\le i\le K}\LDS\left(\pts\cap C_{i,i}\right)
    \end{equation}
    where $\widetilde\pts$ is the restriction of $\pts$ to the cells $C_{i,j}$ with $i\ne j$.
    For each $i$, \Cref{lem: decomposition t-cyclic} asserts that $\pts\cap C_{i,i}$ is a superposition of three (correlated) random-sized sets of i.i.d.~uniform points in $C_{i,i}$.
    Writing $L_m$ for the $\LDS$ of a uniform permutation of size $m$, we obtain:
    \begin{equation*}
        \prob{\Big. \LDS(\pts\cap C_{i,i})>2\delta\sqrt{n}}
        \le 3\max_{1\le j\le 3} \prob{\LDS\big(\pts^{(j)}\cap C_{i,i}\big) >\frac{2\delta}{3}\sqrt{n}}
        \le 3\max_{1\le j\le 3} \prob{L_{\card{\pts^{(j)}\cap C_{i,i}}}>\frac{2\delta}{3}\sqrt{n}} .
    \end{equation*}
    Since each size $\card{\pts^{(j)}\cap C_{i,i}}$ is stochastically dominated by $\Binomial{\big.n}{K^{-2}}$ and the variables $L_m$ are stochastically nondecreasing in $m$, this yields:
    \begin{equation}\label{eq: LDS cases diago decoupe}
        \prob{\max_{1\le i\le K}\LDS(\pts\cap C_{i,i})>2\delta\sqrt{n}}
        \le 3K\prob{ L_{\Binomial{n}{K^{-2}}} >\frac{2\delta}{3} \sqrt{n}} .
    \end{equation}
    First off, Chernoff bounds imply
    \begin{equation}\label{eq: LDS cases diago Chernoff}
        \prob{\big.\Binomial{n}{K^{-2}}>2nK^{-2}}
        \le \exp\left(-c_\delta n\right)
    \end{equation}
    for some constant $c_\delta>0$ which depends only on $\delta$ and whose value may change throughout the proof.
    Then, since $2\sqrt2 K^{-1}<\sqrt2 \delta^2/8<2\delta/3$, \cite[Theorem~2]{S98} implies
    \begin{equation}\label{eq: LDS cases diago DZ}
        \prob{L_{2nK^{-2}}>\frac{2\delta}{3} \sqrt{n}}
        \le \exp\left(-c_\delta\sqrt{n}\right) 
    \end{equation}
    for some other $c_\delta>0$.
    From \Cref{eq: LDS cases diago decoupe,eq: LDS cases diago Chernoff,eq: LDS cases diago DZ} we deduce 
    \begin{align*}
        \prob{\max_{1\le i\le K}\LDS(\pts\cap C_{i,i})>2\delta\sqrt{n}}
        \le \exp\left(-c_\delta\sqrt{n}\right)
    \end{align*}
    for some constant $c_\delta>0$.
    Then by \eqref{eq: decoupe diag LDS t-cyclic}:
    \begin{equation}\label{eq: decoupe diag proba LDS t-cyclic}
        \prob{\Big. \LDS(\tau_n)>2(1+3\delta)\sqrt{n}}
        \le \prob{\big. \LDS\big(\widetilde\pts\big)>2(1+2\delta)\sqrt{n}}
        + \exp\left(-c_\delta\sqrt{n}\right) .
    \end{equation}
    To study the remaining term, we can broadly rewrite the proof of \cite[Lemma~9]{DZ95}.
    Set $\Dy := \Dx/\beta = 1/(K\beta)$ and slice $\carre$ into a thinner grid $\left( \widetilde C_{i,j} \right)_{1\le i\le K,1\le j\le K\beta}$ made of rectangular cells
    \begin{equation*}
        \widetilde C_{i,j} := [(i-1)\Dx,i\Dx]\times[(j-1)\Dy,j\Dy] .
    \end{equation*}

    \begin{figure}
        \centering
        \includegraphics[scale=.5]{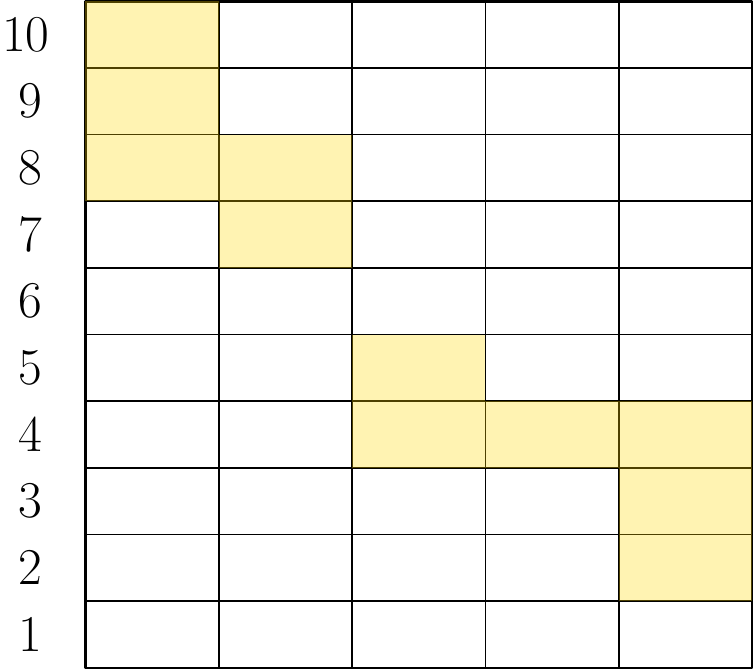}
        \caption{Representation of the down-right sequence of cells (highlighted) encoded by the admissible sequence $\big((10,8),(8,7),(5,4),(4,4),(4,2)\big)$, with $K=5$ and $\beta=2$.}
        \label{fig: DZ_grid}
    \end{figure}
    
    Say a sequence of indices $(\mathbf{a},\mathbf{b}) = (a_i,b_i)_{1\le i\le K}$ is {\em admissible} if it satisfies $1\le b_{i+1}\le a_{i+1}\le b_i\le a_i\le K\beta$ for all $i\in[K-1]$.
    Informally, it encodes a down-right sequence of cells where $a_i$, resp.~$b_i$, is the highest, resp.~lowest, cell in the $i$-th column.
    See \Cref{fig: DZ_grid} for a representation.
    Fix such $(\mathbf{a},\mathbf{b})$ and write $\widetilde\pts_{(\mathbf{a},\mathbf{b})}$ for the restriction of $\widetilde\pts$ to the cells associated with $(\mathbf{a},\mathbf{b})$. 
    For each $1\le i\le K$, write $$R_i := [(i-1)\Dx,i\Dx]\times[(b_i-1)\Dy,a_i\Dy]$$ for the $i$-th column of $(\mathbf{a},\mathbf{b})$, and set $h_i:=a_i-b_i+1$.
    Since $(\mathbf{a},\mathbf{b})$ encodes a down-right path, we can write
    \begin{equation}\label{eq: LDS along down-right path of cells}
        \LDS\big(\widetilde\pts_{(\mathbf{a},\mathbf{b})}\big)
        \le \sum_{i=1}^K \LDS(\widetilde\pts\cap R_i) .
    \end{equation}
    Furthermore, at most one column $R_i$ can intersect the diagonal square cells $C_{i,i}$.
    When this happens, write $i_0$ for the corresponding index, and arbitrarily set $i_0:=1$ otherwise.
    Now notice that, by the Cauchy--Schwarz inequality:
    \begin{align*}
        \sum_{i=1}^K \sqrt{h_i\Dx\Dy}
        \le \sqrt{K\sum_{i=1}^K h_i\Dx\Dy}
        \le \sqrt{K\Dx\Dy(K+K\beta)}
        = \sqrt{1+\delta}
    \end{align*}
    where the middle inequality is obtained by noticing that a down-right path has at most $K+K\beta$ cells.
    This yields
    \begin{equation}\label{eq: decoupe delta pour LDS}
        1+2\delta \ge 
        \delta + \sqrt{1+\delta}\sum_{i=1}^K \sqrt{h_i\Dx\Dy} .
    \end{equation}
    Using \Cref{eq: LDS along down-right path of cells,eq: decoupe delta pour LDS}, we deduce:
    \begin{multline}\label{eq: bound LDS on admissible t-cyclic}
        \prob{ \LDS\big(\widetilde\pts_{(\mathbf{a},\mathbf{b})}\big) > 2(1+2\delta)\sqrt{n}}
        \\\le \prob{\exists i\neq i_0,\;\LDS(\pts\cap R_i)>2\sqrt{1+\delta}\sqrt{n h_i\Dx\Dy}}
        + \prob{ \LDS\big(\widetilde\pts\cap R_{i_0}\big) >2\delta\sqrt{n}} .
    \end{multline}
    We can decompose the family $\widetilde\pts\cap R_{i_0} = (\pts\cap A) \sqcup (\pts\cap B)$ into its points above and below $C_{i_0,i_0}$.
    Since we have $\Leb(A)\le\Dx<\delta^2/16$, \Cref{cor: controlling number of points t-cyclic} yields:
    \begin{equation*}
        \prob{\card{\pts\cap A} > \frac{\delta^2}{16}n}
        \le \exp\left(-c_\delta n\right),
    \end{equation*}
    and likewise for $B$.
    \Cref{lem: iid outside diagonal t-cyclic} asserts that the sets $\pts\cap A$ and $\pts\cap B$ consist of i.i.d.~uniform points, hence:
    \begin{equation*}
        \prob{ \LDS\big(\widetilde\pts\cap R_{i_0}\big) >2\delta\sqrt{n}}
        \le \prob{L_{\card{\pts\cap A}}>\delta\sqrt{n}} + \prob{L_{\card{\pts\cap B}}>\delta\sqrt{n}}
        \le 2\prob{L_{\frac{\delta^2}{16}n}>\delta\sqrt{n}} + \exp(-c_\delta n) .
    \end{equation*}
    Therefore, by \cite[Theorem~2]{S98}:
    \begin{equation}\label{eq: LDS control lone band t-cyclic}
        \prob{ \LDS\big(\widetilde\pts\cap R_{i_0}\big) >2\delta\sqrt{n}}
        \le \exp\left(-c_\delta\sqrt{n}\right) .
    \end{equation}
    The first term of \eqref{eq: bound LDS on admissible t-cyclic} is handled in a similar way.
    If $i\ne i_0$, by \Cref{cor: controlling number of points t-cyclic}:
    \begin{align*}
        \prob{\Big. \card{\pts\cap R_i} > (1+\tfrac\delta2)nh_i\Dx\Dy}
        \le \exp\left(-c_\delta n\right) .
    \end{align*}
    We can then use \Cref{lem: iid outside diagonal t-cyclic} and \cite[Theorem~2]{S98} as before:
    \begin{multline}\label{eq: LDS control one path t-cyclic}
        \prob{\exists i\neq i_0,\;\LDS(\pts\cap R_i) > 2\sqrt{1+\delta}\sqrt{nh_i\Dx\Dy}}
        \\\le K\exp\left(-c_\delta n\right) + K\max_{i\neq i_0}\prob{L_{(1+\frac\delta2)nh_i\Dx\Dy} > 2\sqrt{1+\delta}\sqrt{nh_i\Dx\Dy}}
        \le \exp\left(-c_\delta \sqrt{n}\right).
    \end{multline}
    Finally, putting together \Cref{eq: bound LDS on admissible t-cyclic,eq: LDS control lone band t-cyclic,eq: LDS control one path t-cyclic}:
    \begin{equation*}
        \prob{ \LDS\big(\widetilde\pts_{(\mathbf{a},\mathbf{b})}\big) > 2(1+2\delta)\sqrt{n}} \le \exp\left(-c_\delta \sqrt{n}\right).
    \end{equation*}
    Since any down-right path of points defines an admissible sequence and the number of admissible sequences is bounded by $(K\beta)^{2K}$, we deduce:
    \begin{equation*}
        \prob{\Big. \LDS(\check\pts) > 2(1+2\delta)\sqrt{n}} 
        \le \exp\left(-c_\delta \sqrt{n}\right)
    \end{equation*}
    for some constant $c_\delta>0$ which depends only on $\delta$.
    Using \eqref{eq: decoupe diag proba LDS t-cyclic}, this concludes the proof of the upper tail bound.

    \paragraph{Lower tail bound:}
    Define $C:=[0,1/2]\times[1/2,1]$ and $C^*:=[1/2,1]\times[0,1/2]$.
    Then:
    \begin{equation*}
        \LDS(\pts)\ge \LDS(\pts\cap C)+\LDS(\pts\cap C^*).
    \end{equation*}
    We can apply \Cref{cor: controlling number of points t-cyclic} to obtain
    \begin{equation*}
        \prob{\Big. \card{\pts\cap C} < (1-\delta/2)^2 n/4 }\le \exp\left(-2c_\delta n\right)
    \end{equation*}
    for some $c_\delta>0$.
    Therefore, by \Cref{lem: iid outside diagonal t-cyclic} and \cite[Theorem~1]{DZ99}:
    \begin{equation*}
        \prob{\Big. \LDS(\pts\cap C) < (1-\delta)\sqrt{n}}
        \le \exp\left(-2c_\delta n\right) + \prob{L_{(1-\delta/2)^2 n/4} < 2(1-\delta)\sqrt{n/4}}
        \le \exp\left(-c_\delta n\right)
    \end{equation*}
    and likewise for $C^*$.
    Finally:
    \begin{equation*}
        \prob{\Big. \LDS(\tau_n) < 2(1\m \delta)\sqrt{n} }
        \le \prob{\Big. \LDS(\pts\cap C) < (1\m \delta)\sqrt{n} }
        + \prob{\Big. \LDS(\pts\cap C^*) < (1\m \delta)\sqrt{n} }
        \le \exp\left(-c_\delta n\right)
    \end{equation*}
    for some $c_\delta>0$, as desired.
    
    \bigskip
    Convergence in probability readily follows from these inequalities.
    To obtain $L^q$ convergence for all $q\ge1$ we simply need to prove boundedness in $n$ of $\mean{\left(\LDS(\tau_n)/\sqrt{n}\right)^{q'}}$ for any ${q'}\ge1$.
    This follows from
    \begin{equation*}
        \mean{\left(\LDS(\tau_n)/\sqrt{n}\right)^{q'}}
        \le (2+\delta)^{q'} + (n/\sqrt{n})^{q'}\exp\left(-c_\delta\sqrt{n}\right)
        = \Landau{\O}{n\to\infty}{1}
    \end{equation*}
    where $\delta>0$ is arbitrary.
    This concludes the proof of \Cref{th: LDS t-cyclic}.
\end{proof}

\begin{proof}[Proof of \Cref{cor: LDS conjugation-invariant}]
    We work conditionally given $t^{(n)}$.
    Then for each $n$, $\tau_n$ is a uniform $t^{(n)}$-cyclic permutation.
    Let $\pts$ be a geometric construction of $\tau_n$ and $\check\pts$ be its points outside the diagonal.
    Define $\check t^{(n)}$ as the cycle type $(0,t_2^{(n)},t_3^{(n)},\dots)$ of size $\check n := n-t_1^{(n)}$.
    Then according to \Cref{lem: geometric construction t-cyclic}, $\check\tau_n := \perm{\check\pts}$ is a uniform $\check t^{(n)}$-cyclic permutation.
    Since fixed points contribute at most $1$ to $\LDS(\tau_n)$, we can write:
    \begin{equation*}
        \LDS(\check\tau_n) \le \LDS(\tau_n) \le 1 + \LDS(\check\tau_n) .
    \end{equation*}
    Thus by \Cref{th: LDS t-cyclic}, for any $\delta>0$ there exists a universal constant $c_\delta>0$ such that for any $n$:
    \begin{equation*}
        \prob{\LDS(\tau_n) > 2(1+\delta)\sqrt{\check n}}
        \le \mean{ \prob{\left. \LDS(\check\tau_n) > 2(1+\delta)\sqrt{\check n} \quad\right| t^{(n)}} } 
        \\ \le \mean{\exp\left(-c_\delta \sqrt{\check n}\right)} ,
    \end{equation*}
    and likewise for the lower tail bound.
    If $\check n \to \infty$ in probability then the right-hand side goes to $0$ as $n\to\infty$, and this proves convergence in probability.
    Convergence in $L^q$ follows as in the proof of \Cref{th: LDS t-cyclic}, bounding $\LDS(\tau_n)$ above by $n-t_1^{(n)}+1$ and using the fact that the previous tail inequalities hold a.s.~conditionally given $t^{(n)}$.
\end{proof}

\subsection{Increasing subsequences}

The study of increasing subsequences is subtler than that of decreasing ones in our geometric construction.
Indeed, while down-right paths tend to stay away from the diagonal of $\carre$, allowing us to use \Cref{lem: iid outside diagonal t-cyclic}, up-right paths tend to stay very close to it.
Consequently, we settle for mere bounds here.

\begin{proof}[Proof of \Cref{th: LIS t-cyclic}]
    Fix $\delta>0$.
    Let $\pts$ be a geometric construction of $\tau_n$ as in \Cref{lem: geometric construction t-cyclic}.
    
    \paragraph{Lower tail bound:}
    Set $K=\lceil 2/\delta \rceil$ and let $(C_{i,j})_{1\le i,j\le K}$ be a regular square grid defined by
    \begin{equation*}
        C_{i,j} = [(i\m1)/K,i/K]\times[(j\m1)/K,j/K] .
    \end{equation*}
    Then:
    \begin{equation}\label{eq: lower bound LIS}
        \LIS(\tau_n) = \LIS(\pts) \ge \sum_{i=1}^{K-1} \LIS(\pts\cap C_{i,i\p1}) .
    \end{equation}
    Using \Cref{cor: controlling number of points t-cyclic}, there exists $c_\delta>0$ such that:
    \begin{equation*}
        \prob{\Big. \card{\pts\cap C_{i,i\p1}} < (1-\delta/2)^2 n/K^2}
        \le \exp\left({-2c_\delta n}\right) .
    \end{equation*}
    Then by \cite[Theorem~1]{DZ99} along with \Cref{lem: iid outside diagonal t-cyclic}, denoting by $L_m$ the $\LIS$ of a uniform permutation of size $m$:
    \begin{equation*}
        \prob{\big.\LIS(\pts\cap C_{i,i\p1}) < (1-\delta)\frac{2\sqrt{n}}{K-1}}
        \le \prob{L_{(1-\delta/2)^2 n/K^2} < (1-\delta)\frac{2\sqrt{n}}{K-1}} + \exp\left(-2c_\delta n\right)
        \le \exp\left({-c_\delta n}\right) 
    \end{equation*}
    since $\frac{1-\delta/2}{K}>\frac{1-\delta}{K-1}$.
    Finally, using \Cref{eq: lower bound LIS}:
    \begin{equation*}
        \prob{\Big. \LIS(\tau_n) < (1-\delta)2\sqrt{n}}
        \le (K-1)\max_{1\le i\le K-1}\prob{\big.\LIS(\pts\cap C_{i,i\p1}) < (1-\delta)\frac{2\sqrt{n}}{K-1}}
        \le \exp\left({-c_\delta n}\right)
    \end{equation*}
    for some $c_\delta>0$.
    
    \paragraph{Upper tail bound:} For this, simply use \Cref{lem: decomposition t-cyclic} and a rough union bound.
    Since
    \begin{equation*}
        \LIS(\tau_n)=\LIS(\pts)\le \LIS(\pts^{(1)})+\LIS(\pts^{(2)})+\LIS(\pts^{(3)}) ,
    \end{equation*}
    we can use \cite[Theorem~2]{S98} to deduce
    \begin{equation*}
        \prob{\LIS(\tau_n) > (1+\delta)2\sqrt{3n}}
        \le 3\prob{L_{\lceil n/3\rceil+1} > (1+\delta)2\sqrt{n/3}}
        \le \exp\left(-c_\delta\sqrt{n}\right) .
    \end{equation*}
    for some $c_\delta>0$.
    This concludes the proof.
\end{proof}

\begin{proof}[Proof of \Cref{cor: LIS conjugation-invariant}]
    The method is the same as in the proof of \Cref{cor: LDS conjugation-invariant}, using rather the bounds
    \begin{equation*}
        \max\left( t_1^{(n)} , \LIS(\check\tau_n) \right) \le \LIS(\tau_n) \le t_1^{(n)} + \LIS(\check\tau_n)
    \end{equation*}
    and discussing whether $t_1^{(n)}\ge (1-\delta)2\sqrt n$ for the upper tail bound.
\end{proof}

\section{Proofs of the results on Robinson--Schensted shapes}
\label{sec: proof RS}

In \cite{S23}, Sj\"ostrand studied the limit RS shape of so-called \textit{locally uniform permutations}.
The latter are random permutations of the form $\sigma_n = \perm{\pts_n}$ where $\pts_n = \{Z_1,\dots,Z_n\}$ are i.i.d.~random points distributed under some absolutely continuous distribution $\rho$.
In particular, he proved \cite[Theorem~10.2]{S23} that the limit
\[
    \lim_{n\to\infty} \frac1n \LDS_{r\sqrt n}\left(\sigma_n\right)
\]
exists for any $r\ge0$.
It is denoted by $F_{\max}(r)$, where the dependence in $\rho$ is implicitly understood, and is characterized by some variational problem:
\[
    F_{\max}(r) = \sup_{u\in \Usjo_{0,r}\left(\opencarre\right)} F_\rho(u) .
\]
Precise definitions are postponed to \Cref{sec: proof RS prelim}.
The approach in \cite{S23} relies on a shift in point of view:
instead of looking for large $r\sqrt n$-decreasing subsequences in $\sigma_n$, one looks for suitable functions $w_n \in \Usjo_{r\sqrt n}\left( \opencarre \right)$ that ``fit'' the point set $\pts_n$.
The quantity $F_\rho(u)$ is constructed so that it can be thought of as a ``score'' associated with $u$, that is, a measurement of how well the function $u$ should fit the decreasing subsequences of $\pts_n$ after rescaling.
\cite[Proposition~4.5]{S23} shows that the function $F_\rho$ can be used to \textit{locally} approximate $\LDS_{r\sqrt n}\left(\sigma_n\right)$;
it is also reformulated in handier terms into \cite[Lemma~8.2]{S23} which yields a global upper bound, and \cite[Lemma~6.3]{S23} which yields a local lower bound.

The difference between our setting and Sj\"ostrand's is that the points of our geometric construction $\pts$ are not globally independent.
Nonetheless, they are \textit{locally} i.i.d.~thanks to \Cref{lem: iid outside diagonal t-cyclic}, allowing us to use the same techniques as in \cite{S23}.
Moreover, the local sampling density is constant in our case ($\rho:=1$), thus greatly simplifying several steps of the proof.

This section is organized as follows.
First, in \Cref{sec: proof RS prelim}, we recall the definitions and results from \cite{S23} that will be relevant in our setting.
Then, in \Cref{sec: proof RS uniform}, we illustrate them with the case of i.i.d.~points sampled uniformly inside an arbitrary domain.
Our contribution comes in \Cref{sec: proof RS our setting}, where we use these results to establish \Cref{th: limit shape w/o fixed points}, and finally \Cref{cor: limit shape conjugation-invariant}.

\subsection{Preliminaries}\label{sec: proof RS prelim}

\begin{figure}
    \centering
    \includegraphics[scale=1]{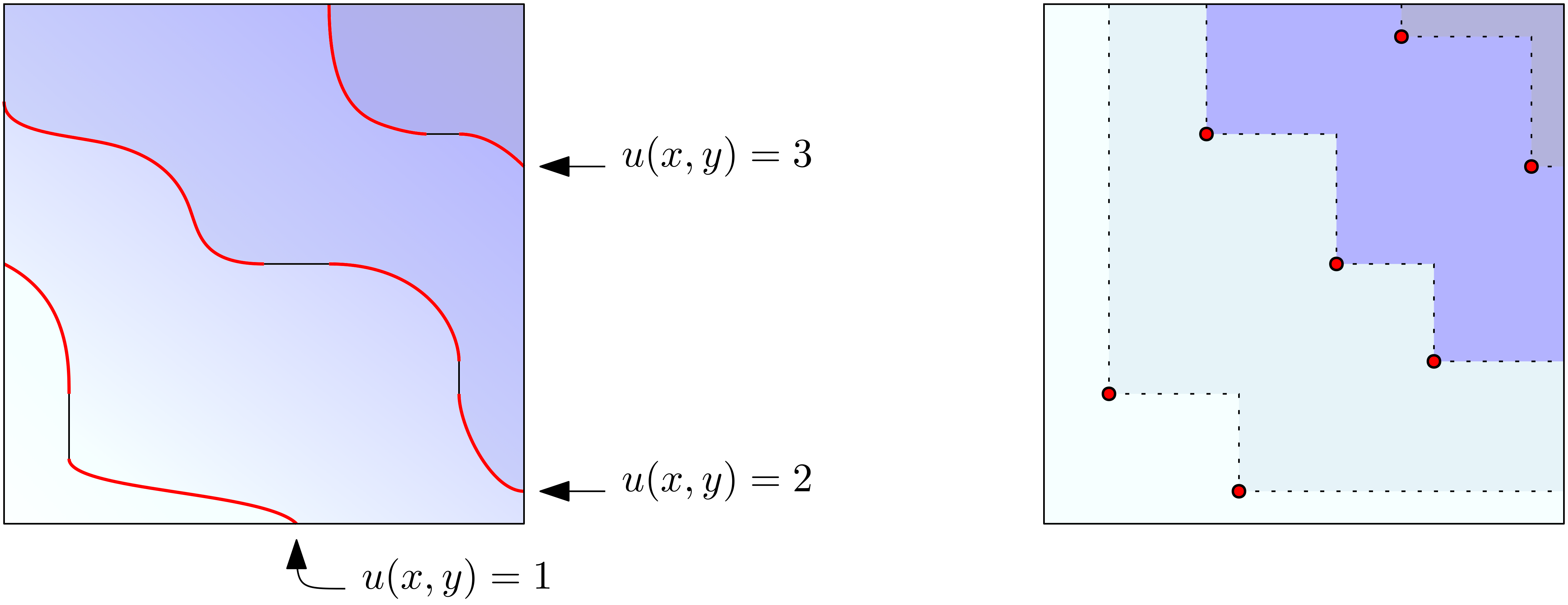}
    \caption{Left: a doubly increasing function $u\in\Usjo_{0,r}\left( \opencarre \right)$ for some $3\le r< 4$, with its integer level lines.
    Darker colors indicate greater values of $u$.
    The set $D(u)$ is highlighted in red.
    Right: in red, a $3$-decreasing point set $P$.
    The function $\kappa_P$ indeed satisfies $D(\kappa_P) = P$.
    Its values range from $0$ to $3$, with lighter colors indicating lower values and darker colors indicating higher values.}
    \label{fig: Sjo D and kappa}
\end{figure}

Let $\Omega$ be an open subset of $\R^2$.
If $u:\Omega\to\R$ is Lebesgue-measurable, write $\lVert u \rVert_{1,\Omega} := \int_\Omega \card{u} d\Leb$ for its $L^1$ norm. 
Recall that we use the partial order on $\R^2$ defined by:
\[
    (x,y)\le (x',y') \quad\text{ if and only if }\quad x\le x' \text{ and } y\le y' .
\]
Then, we call $u$ {\em doubly increasing} if $u(x,y)\le u(x',y')$ whenever $(x,y)\le(x',y')$.
Denote by $\Usjo(\Omega)$ the set of doubly increasing functions on $\Omega$, and by $\Usjo_{h,r}(\Omega)$ its subset consisting of functions with values in $[h,h+r]$.
For any $u\in\Usjo(\Omega)$, define
\begin{equation*}
    D(u) := \left\{\Big. (x,y)\in\Omega \,:\, u(x,y)\in\Z \text{ and } u(x',y')<u(x,y) \text{ for any } (x',y')\in\Omega\setminus\{(x,y)\} \text{ s.t. } (x',y')\le (x,y) \right\} .
\end{equation*}
In words, $D(u)$ is the set of ``south-west corners'' in the integer level lines of $u$.
Also, if $P$ is a finite subset of $\Omega$, define $\kappa_P : (x,y)\in \Omega \mapsto \LIS\left(\big. P \cap \left( (-\infty,x] \times (-\infty,y] \right) \right)$.
See \Cref{fig: Sjo D and kappa} for an illustration.

It is easy to check that if $u\in\Usjo_{0,r}(\Omega)$ then $D(u)$ is an $\lfloor r\rfloor$-decreasing subset of $\Omega$, see \cite[Lemma~6.2]{S23}.
Reciprocally, one can also check that if $P$ is a $k$-decreasing subset of $\Omega$ then $\kappa_P$ is in $\Usjo_{0,k}(\Omega)$ and satisfies $D(\kappa_P)=P$, see again \cite[Lemma~6.2]{S23}.
Now, if $\pts$ is a finite point set in $\Omega$, recall that $\LDS_k(\pts)$ can be expressed as the maximum of $\card{\pts \cap P}$ where $P$ ranges over $k$-decreasing subsets of $\Omega$.
Subsequently, by \cite[Lemma~6.2]{S23}:
\[
    \LDS_k(\pts) = \max_{u\in\Usjo_{0,k}(\Omega)} \card{D(u)\cap\pts} .
\]
We shall use this equation after rescaling, that is:
\begin{equation}\label{eq: link LDS Sjo D}
    \LDS_{r\sqrt n}(\pts) = \max_{w\in\Usjo_{0,r}(\Omega)} \card{D(w\sqrt n)\cap\pts} .
\end{equation}

The main tool that we will use from \cite{S23} is a function $F_\Omega : \Usjo(\Omega) \to \R_+$ that serves as a ``score'' associated with $u$: 
the quantity $F_\Omega(u)$ is a limit approximation of $\frac1n \card{D(w\sqrt n)\cap \sigma_n}$, where $\sigma_n$ is a homogeneous Poisson point process with intensity $n$ on $\Omega$ and $w$ is close to $u$.
The definition of $F_\Omega$ hinges on the following lemma:

\begin{lem}[Theorem~3.2 in \cite{S23}]
    For $\beta>0$, define
    \[
        \Omega_\beta := \left\{ (x,y)\in\R^2 \,:\, 0<x+y<\sqrt2 \text{ and } 0<y-x<\beta\sqrt2 \right\} .
    \]
    For $\gamma>0$, let $\sigma_\gamma$ be a homogeneous Poisson point process on $\R^2$ with intensity $\gamma$.
    Then for each $r\ge0$, as $\beta$ and $\gamma$ tend to infinity, the random variable $\frac{1}{\beta\gamma}\LDS_{r\sqrt\gamma}\left( \sigma_\gamma \cap \Omega_\beta \right)$ converges in $L^1$ to a constant $\phi(r)$.
\end{lem}

Then, for $\theta\ge0$, define $L(\theta) := \phi\big( \sqrt{2\theta} \big)$.
Finally, recall that each $u\in\Usjo(\Omega)$ admits partial derivatives almost everywhere, and define $F_\Omega(u) := \lVert L(\partial_x u \, \partial_y u) \rVert_{1,\Omega}$.
The following two lemmas justify the aforementioned idea that $F_\Omega(u)$ can be interpreted as a score associated with $u$:

\begin{lem}[From Lemma~8.2 in \cite{S23}]\label{lem: F serves as a global upper bound}
    Let $\Omega$ be an arbitrary open subset of $\R^2$, and let $\sigma_n$ be a homogeneous Poisson point process on $\Omega$ with intensity $n$.
    Then for any $u\in\Usjo(\Omega)$ and any $\epsilon>0$, there exists $\delta>0$ such that, with high probability as $n\to\infty$:
    \[
        \sup_{w\in\Usjo(\Omega) , \lVert w-u \rVert_{1,\Omega} < \delta} \,
        \frac1n \card{ D(w\sqrt n) \cap \sigma_n } < F_\Omega(u) + \epsilon .
    \]
\end{lem}

\begin{lem}[From Lemma~6.3 in \cite{S23}]\label{lem: F serves as a local lower bound}
    For $a,b,\beta>0$, define
    \[
        \Omega_{a,b,\beta} := \left\{ (x,y)\in\R^2 \,:\, \card{ax+by}<1 \text{ and } \card{ax-by}<\beta \right\} 
    \]
    and for $c\ge0$, define 
    \[
        u_{a,b,c} : (x,y)\in \Omega_{a,b,\beta} \mapsto c(ax+by) .
    \]
    Let $\sigma_n$ be a homogeneous Poisson point process on $\Omega_{a,b,\beta}$ with intensity $n$.
    Then with high probability as $n\to\infty$, for any $d\in\R$, there exists $w^{(n)} \in \Usjo_{d,2c}(\Omega_{a,b,\beta})$ such that:
    \[
        \frac{1}{n} \card{ D\left( w^{(n)}\sqrt n \right) \cap \sigma_n } \ge F_{\Omega_{a,b,\beta}}\left( u_{a,b,c} \right) - \frac6\beta \Leb\left( \Omega_{a,b,\beta} \right) ,
    \]
    or equivalently:
    \[
        \frac{1}{n \cdot \Leb\left( \Omega_{a,b,\beta} \right)} \card{ D\left( w^{(n)}\sqrt n \right) \cap \sigma_n } \ge L\left( ca \cdot cb \right) - \frac6\beta .
    \]
\end{lem}

Contrary to \Cref{lem: F serves as a global upper bound}, which yields a \textit{global} upper bound, \Cref{lem: F serves as a local lower bound} yields a \textit{local} lower bound.
In order to get a \textit{global} lower bound, the broad idea is to cover most of the global domain $\Omega$ by a family of adequate local parallelograms, apply \Cref{lem: F serves as a local lower bound} on each of them, and carefully ``patch'' the resulting family of local doubly increasing functions into a global one.
Let us introduce some notions from \cite{S23} to that aim.
Throughout, $\Omega$ denotes an arbitrary open subset of $\R^2$.

If $u\in\Usjo(\Omega)$ and $\iota>0$, we call $(u,\iota)$-parallelogram any parallelogram $P$ of the form
\begin{align*}
    P = \left\{ (x,y)\in\R^2 \,:\, 
    \card{ \widetilde u_x^P (x-x_P) + \widetilde u_y^P (y-y_P) } \le \iota c_P \text{ and }
    \card{ \widetilde u_x^P (x-x_P) - \widetilde u_y^P (y-y_P) } \le c_P 
    \right\}
\end{align*}
where $(x_P,y_P)$ is a differentiability point of $u$, $c_P>0$, $\widetilde u_x^P := \iota^3 \vee \partial_x u (x_P,y_P)$ and $\widetilde u_y^P := \iota^3 \vee \partial_y u (x_P,y_P)$.
We write $u_x^P$ and $u_y^P$ for the partial derivatives of $u$ at $(x_P,y_P)$, and say that $P$ is well-behaved if $u_x^P = \widetilde u_x^P$ and $u_y^P = \widetilde u_y^P$.

\begin{lem}[Lemma~7.4 in \cite{S23}]\label{lem: Sjo simple bounds inside parallelogram}
    Let $P$ be a $(u,\iota)$-parallelogram.
    Then for each $(x,y)\in P$:
    \[
        \card{x-x_P} + \card{y-y_P} \le c_P (1+\iota) \iota^{-3} .
    \]
\end{lem}

\begin{lem}[From Lemma~7.7 in \cite{S23}]\label{lem: Sjo nice covering}
    Let $u\in\Usjo(\Omega)$ and $\epsilon>0$.
    For each $0<\iota<1/2$ there exists a measurable subset $S_\iota \subseteq \Omega$ on which $u$ is differentiable, and a finite collection $\Psjo_\iota$ of disjoint $(u,\iota)$-parallelograms, such that:
    \begin{itemize}
        \item[(a)] $S_\iota \subseteq \bigcup_{P\in\Psjo_\iota} P$;
        \item[(b)] $\partial_x u$ and $\partial_y u$ are bounded on $\bigcup_{0<\iota<1/2} S_\iota$;
        \item[(c)] $\Leb( \Omega \setminus S_\iota ) < \epsilon + o_\iota(1)$;
    \end{itemize}
    and for each $P\in\Psjo_\iota$:
    \begin{itemize}
        \item[(a')] $P\subseteq\Omega$ and $(x_P,y_P)\in S_\iota$;
        \item[(f)] for all $(x,y)\in P$:
        \[
            \card{u(x,y) - \left( u(x_P,y_P) + (x-x_P)u_x^P + (y-y_P)u_y^P \right)} \le \iota^5\left( \card{x-x_P} + \card{y-y_P} \right) ;
        \]
        \item[(g)] $\card{ \frac{1}{\Leb(P)} 
        F_P(u)
        - L(u_x^P u_y^P) } < \iota$.
    \end{itemize}
\end{lem}

In words, the set $S_\iota$ covers most of the domain $\Omega$, and on each parallelogram, the function $u$ is almost linear and the function $L(\partial_x u \, \partial_y u)$ is almost constant.
This will allow use to use \Cref{lem: F serves as a local lower bound} on each parallelogram.
Then, patching the resulting local doubly increasing functions into a global one will mainly rely on item (f) above and on the following lemma:

\begin{lem}[From Lemma~9.1 in \cite{S23}]\label{lem: Sjo patch doubly increasing}
    Let $\mathcal{S}$ be an open subset of $\Omega$, and let $u\in\Usjo(\mathcal{S})$.
    Then, there exists $w\in\Usjo(\Omega)$ such that $w\vert_{\mathcal{S}} = u$ and $\overline{u(\mathcal{S})} = \overline{w(\Omega)}$.
\end{lem}

Finally, let us state a lemma about parallelograms that we could not find explicitly stated in \cite{S23} but that we will need nonetheless:

\begin{lem}\label{lem: boundaries of parallelogram}
    Let $u\in\Usjo(\Omega)$, $0<\iota<1/2$, and $P$ be a $(u,\iota)$-parallelogram.
    Let $P'$ denote the open parallelogram obtained by shrinking $P$ by a factor $1\m2\iota$ in width and height, that is:
    \begin{align*}
        P' := \Bigg\{ (x,y)\in\R^2 \,:\, 
        &\card{ \frac{\widetilde u_x^P}{(1-2\iota)\iota c_P} (x-x_P) + \frac{\widetilde u_y^P}{(1-2\iota)\iota c_P} (y-y_P) } < 1 
        \\&\text{ and } \card{ \frac{\widetilde u_x^P}{(1-2\iota)\iota c_P} (x-x_P) - \frac{\widetilde u_y^P}{(1-2\iota)\iota c_P} (y-y_P) } < \frac{1}{\iota} \Bigg\} .
    \end{align*}
    Let $(x_1,y_1) \le (x_2,y_2)$ such that $(x_1,y_1)$ is in $P'$ and $(x_2,y_2)$ is on the boundary of $P$.
    Then necessarily $(x_2,y_2)$ lies on the northeast boundary of $P$, that is:
    \[
        \widetilde u_x^P (x_2-x_P) + \widetilde u_y^P (y_2-y_P) = \iota c_P .
    \]
\end{lem}

\begin{proof}
    Obviously, $(x_2,y_2)$ cannot lie on the southwest boundary of $P$.
    Assume that it lies on the northwest boundary of $P$, that is:
    \[
        \widetilde u_x^P (x_2-x_P) - \widetilde u_y^P (y_2-y_P) = -c_P .
    \]
    Since $(x_1,y_1)\in P'$, we have that:
    \[
        \widetilde u_x^P (x_1-x_P) - \widetilde u_y^P (y_1-y_P) \ge -(1-2\iota)c_P ,
    \]
    whence:
    \[
        \widetilde u_y^P (y_2-y_P) 
        = \widetilde u_x^P (x_2-x_P) + c_P
        \ge \widetilde u_x^P (x_1-x_P) + c_P
        \ge \widetilde u_y^P (y_1-y_P) + 2\iota c_P .
    \]
    Therefore:
    \[
        \widetilde u_x^P (x_2-x_P) + \widetilde u_y^P (y_2-y_P)
        \ge \widetilde u_x^P (x_1-x_P) + \widetilde u_y^P (y_1-y_P) + 2\iota c_P
        \ge -(1-2\iota)\iota c_P + 2\iota c_P
        = \iota c_P + 2\iota^2 c_P,
    \]
    contradicting the definition of $P$.
    The computation works the same if we assume that $(x_2,y_2)$ lies on the southeast boundary of $P$, and this concludes the proof.
\end{proof}

\subsection{The uniform case}\label{sec: proof RS uniform}

Now, let us illustrate the results recalled in the previous section with the case study of i.i.d.~uniform points inside an arbitrary domain.
Note that this is a much simpler framework than the one considered in \cite{S23} (since here we have $\rho=1$), and that \Cref{th: limit shape uniform PPP on arbitrary domain} below is wholly contained in \cite[Theorem~10.2]{S23}.
Nonetheless, stating it and recalling its proof will be useful for our proof of \Cref{th: limit shape w/o fixed points}.

\begin{theorem}[From Theorem~10.2 in \cite{S23}]\label{th: limit shape uniform PPP on arbitrary domain}
    Let $\Omega$ be an arbitrary open subset of $\R^2$, and let $\sigma_n$ be a homogeneous Poisson point process on $\Omega$ with intensity $n$.
    Then for each $r\ge0$, the following convergence holds in probability as $n\to\infty$:
    \[
        \frac1n \LDS_{r\sqrt n}(\sigma_n) 
        \,\longrightarrow\,
        \sup_{u\in\Usjo_{0,r}(\Omega)} F_{\Omega}(u) .
    \]
\end{theorem}

\begin{proof}
    Fix $r\ge0$.
    Thanks to \Cref{eq: link LDS Sjo D}, our goal is to bound $\card{D(w\sqrt n)\cap\sigma_n}$ above and below, uniformly over $w\in\Usjo_{0,r}(\Omega)$.
    Fix $\epsilon>0$.
    As stated in the previous section, the upper bound uses \Cref{lem: F serves as a global upper bound} whereas the lower bound uses \Cref{lem: F serves as a local lower bound} and requires more work.

    \paragraph{Upper bound:}
    For each $u\in\Usjo_{0,r}(\Omega)$, use \Cref{lem: F serves as a global upper bound} to find $\delta_u>0$ such that:
    \begin{equation}\label{eq: Sjo uniform - upper bound in L1 neighbourhood}
        \sup_{w\in\Usjo_{0,r}(\Omega) , \lVert w-u \rVert_{1,\Omega} < \delta_u} \,
        \frac1n \card{ D(w\sqrt n) \cap \sigma_n } < F_\Omega(u) + \epsilon 
    \end{equation}
    holds w.h.p.~as $n\to\infty$.
    Write $\mathcal{B}_u := \left\{ w\in\Usjo_{0,r}(\Omega) : \lVert w-u \rVert_{1,\Omega} < \delta_u \right\}$.
    Then let us use \cite[Proposition~9.2]{S23}, which states that $\Usjo_{0,r}\left(\Omega\right)$ is compact for the $L^1$ metric.
    Since the open subsets $\mathcal{B}_u$ cover $\Usjo_{0,r}\left(\Omega\right)$ when $u$ ranges over $\Usjo_{0,r}\left(\Omega\right)$, there is a finite subcover 
    \[
        \Usjo_{0,r}\left(\Omega\right) \subseteq \mathcal{B}_{u_1} \cup \dots \cup \mathcal{B}_{u_k} .
    \]
    By applying \eqref{eq: Sjo uniform - upper bound in L1 neighbourhood} to $u_1, \dots, u_k$, we get that
    \begin{equation}\label{eq: Sjo uniform - final upper bound}
        \sup_{w\in\Usjo_{0,r}(\Omega)} \,
        \frac1n \card{ D(w\sqrt n) \cap \sigma_n } < \sup_{1\le j\le k}F_\Omega(u_j) + \epsilon 
    \end{equation}
    holds w.h.p.~as $n\to\infty$.
    
    \paragraph{Lower bound:}
    This bound is broadly contained in the proof of \cite[Lemma~10.1]{S23}.
    Fix $u\in\Usjo_{0,r}(\Omega)$.
    We wish to find $w_n\in \Usjo_{0,r}\left(\Omega\right)$ such that $\frac1n \card{D(w_n\sqrt n)\cap\sigma_n} \ge F_{\Omega}(u) - \epsilon$ holds w.h.p.~as $n\to\infty$.

    For each $0<\iota<1/2$, let $S_\iota$ and $\Psjo_\iota$ be given by \Cref{lem: Sjo nice covering}.
    Denote by $\widetilde \Psjo_\iota$ the set of well-behaved parallelograms in $\Psjo_\iota$.
    For each $P\in\Psjo_\iota$, let $\bar u_P \in \Usjo(P)$ be defined by
    \begin{equation*}
        \text{for all }(x,y)\in P,\quad
        \bar u_P(x,y) := u(x_P,y_P) + (x-x_P)u_x^P + (y-y_P)u_y^P 
    \end{equation*}
    if $P\in\widetilde \Psjo_\iota$, and by $\bar u_P := u\vert_P$ if $P\in \Psjo_\iota \setminus \widetilde\Psjo_\iota$.
    For each $P\in\Psjo_\iota$, let $P'$ denote the open parallelogram obtained by shrinking $P$ by a factor $1\m2\iota$ in width and height, as in \Cref{lem: boundaries of parallelogram}.
    Now, For each $P\in \widetilde\Psjo_\iota$, let us apply \Cref{lem: F serves as a local lower bound} to the linear function $\bar u_P$ on the domain $P'$, with corresponding values $a := \frac{u_x^P}{(1-2\iota)\iota c_P}$, $b := \frac{u_y^P}{(1-2\iota)\iota c_P}$, $c := (1-2\iota)\iota c_P$, and $\beta := 1/\iota$.
    We get that, w.h.p.~as $n\to\infty$, there exists
    $w_P^{(n)} \in \Usjo_{u(x_P,y_P) - (1-2\iota)\iota c_P , 2 (1-2\iota) \iota c_P}(P')$ such that
    \begin{equation}\label{eq: Sjo uniform - local control}
        \frac{1}{n \cdot \Leb(P')} \card{D\left(w_P^{(n)}\sqrt n\right) \cap \sigma_n \cap P'} \ge L(u_x^P u_y^P) - o_\iota(1)
    \end{equation}
    where $o_\iota(1) \to 0$ as $\iota\to0$, uniformly in $P$ and $n$.
    By \cite[Lemma~4.9]{S23} and item (b) of \Cref{lem: Sjo nice covering}, the function $L$ is increasing and uniformly continuous on the bounded set 
    \[
        \left\{ \partial_x u(x,y) \cdot \partial_y u(x,y) \,:\, (x,y)\in {\bigcup}_{0<\iota<1/2} S_\iota \right\}
    \]
    and satisfies $L(0)=0$.
    By definition, for any $P \in \Psjo_\iota \setminus \widetilde\Psjo_\iota$ we have $\min\left( u_x^P , u_y^P \right) < \iota^3$, and thus $L(u_x^P u_y^P) \le L( M \iota^3) = o_\iota(1)$ where $M>0$ is the absolute bound given by item (b) of \Cref{lem: Sjo nice covering}.
    Therefore, for each $P\in \Psjo_\iota \setminus \widetilde\Psjo_\iota$ and any $w_P^{(n)} \in \Usjo(P')$, \Cref{eq: Sjo uniform - local control} trivially holds (with an adequate $o_\iota(1)$).
    
    For fixed $\iota$, under the event that $w_P^{(n)}$ exists for all $P\in \widetilde\Psjo_\iota$, define a function $w_n$ on $\bigcup_{P\in \widetilde\Psjo_\iota} P' \,\cup\, \left( \Omega \setminus \bigcup_{P\in \widetilde\Psjo_\iota} \mathring P \right)$ as follows:
    $w_n$ coincides with $w_P^{(n)}$ on $P'$ for each $P\in\widetilde \Psjo_\iota$, whereas is coincides with $u$ on $\Omega \setminus \bigcup_{P\in \widetilde\Psjo_\iota} \mathring P$ (we use the notation $\mathring P$ to denote the interior of $P$).
    Let us check that $w_n$ is doubly increasing on this domain.
    First consider $(x_1, y_1) \le (x_2, y_2)$ such that $(x_1,y_1) \in P'$ for some $P\in \widetilde\Psjo_\iota$ and $(x_2,y_2)$ is on the boundary of $P$.
    According to \Cref{lem: boundaries of parallelogram}, we have:
    \begin{equation}\label{eq: Sjo uniform - patching 1}
        \bar u_P(x_2,y_2) - u(x_P,y_P) = (x_2-x_P)u_x^P + (y_2-y_p)u_y^P 
        = \iota c_P .
    \end{equation}
    Since $w_P^{(n)} \in \Usjo_{u(x_P,y_P) - (1-2\iota)\iota c_P , 2 (1-2\iota) \iota c_P}(P')$ and $(x_1,y_1)\in P'$, we have:
    \begin{equation}\label{eq: Sjo uniform - patching 2}
        \card{ w_P^{(n)}(x_1,y_1) - u(x_P,y_P) } \le (1-2\iota) \iota c_P .
    \end{equation}
    Also, using item (f) of \Cref{lem: Sjo nice covering} and \Cref{lem: boundaries of parallelogram}:
    \begin{equation}\label{eq: Sjo uniform - patching 3}
        \card{ w_P^{(n)}(x_2,y_2) - \bar u_P(x_2,y_2) }
        = \card{ u(x_2,y_2) - \bar u_P(x_2,y_2) }
        \le \iota^5\left( \card{x-x_P} + \card{y-y_P} \right)
        \le c_P (1+\iota) \iota^3 
        \le 2 c_P \iota^2 .
    \end{equation}
    Finally, putting \Cref{eq: Sjo uniform - patching 1,eq: Sjo uniform - patching 2,eq: Sjo uniform - patching 3} together:
    \begin{multline*}
        w_P^{(n)}(x_2,y_2) - w_P^{(n)}(x_1,y_1)
        \ge \bar u_P(x_2,y_2) - u(x_P,y_P)
        - \card{ w_P^{(n)}(x_1,y_1) - u(x_P,y_P) }
        - \card{ w_P^{(n)}(x_2,y_2) - \bar u_P(x_2,y_2) }
        \\\ge \iota c_P - (1-2\iota) \iota c_P - 2 c_P \iota^2
        = 0 ,
    \end{multline*}
    that is, $w_n(x_2,y_2) \ge w_n(x_1,y_1)$.
    If $(x_1, y_1) \le (x_2, y_2)$ are such that $(x_2,y_2) \in P'$ for some $P\in \widetilde\Psjo_\iota$ and $(x_1,y_1)$ is on the boundary of $P$, it can be proven in the same way that $w_n(x_2,y_2) \ge w_n(x_1,y_1)$.
    Since $w_n$ is doubly increasing on each $P'$ for $P\in\widetilde\Psjo_\iota$ and also on $\Omega \setminus \bigcup_{P\in \widetilde\Psjo_\iota} \mathring P$, and since the parallelograms forming the family $\widetilde\Psjo_\iota$ are disjoint, this suffices to prove that $w_n$ is doubly increasing on its domain of definition.

    Using \Cref{lem: Sjo patch doubly increasing}, the function $w_n$ can be extended from $\bigcup_{P\in \widetilde\Psjo_\iota} P' \,\cup\, \left( \Omega \setminus \bigcup_{P\in \widetilde\Psjo_\iota} P \right)$ to $\Omega$, and we still write $w_n \in \Usjo(\Omega)$ for this function.
    Since $u\in\Usjo_{0,r}(\Omega)$, by construction we also have that $w_n\in\Usjo_{0,r}(\Omega)$.
    Using \eqref{eq: Sjo uniform - local control}, we can write w.h.p.~as $n\to\infty$:
    \begin{align}\label{eq: Sjo uniform - minoration 1}
        \frac1n \card{D(w_n\sqrt n) \cap \sigma_n}
        \ge \sum_{P\in\Psjo_\iota} \frac1n \card{D(w_n\sqrt n) \cap \sigma_n \cap P'}
        \ge \sum_{P\in\Psjo_\iota} \Leb(P') \left( L(u_x^P u_y^P) - o_\iota(1) \right) 
    \end{align}
    where $o_\iota(1)$ is independent of $P$ and $n$. 
    By item (g) of \Cref{lem: Sjo nice covering}:
    \begin{align}\label{eq: Sjo uniform - minoration 2}
        \sum_{P\in\Psjo_\iota} \Leb(P') L(u_x^P u_y^P)
        = (1\m2\iota)^2 \sum_{P\in\Psjo_\iota} \Leb(P) L(u_x^P u_y^P)
        \ge (1\m2\iota)^2 
        F_{ \bigcup_{\Psjo_\iota}\!\!P }(u) - o_\iota(1) .
    \end{align}
    Let us also use \cite[Lemma~4.9]{S23}, which states that $0\le L(\partial_x u \, \partial_y u)\le 1$.
    Along with items (a) and (c) of \Cref{lem: Sjo nice covering}, we get:
    \begin{equation*}
        \Leb\left(\big. \bigcup_{P\in\Psjo_\iota} P \right) 
        \ge \Leb\left( \Omega \right) - \epsilon - o_\iota(1) .
    \end{equation*}
    This implies $F_{ \bigcup_{\Psjo_\iota}\!\!P }(u) \ge F_{\Omega}(u) - \epsilon - o_\iota(1)$.
    Along with \eqref{eq: Sjo uniform - minoration 1} and \eqref{eq: Sjo uniform - minoration 2}, for small enough $\iota$ this yields:
    \begin{align*}
        \frac1n \card{D(w_n\sqrt n) \cap \sigma_n}
        \ge F_{\Omega}(u) - 2\epsilon
    \end{align*}
    w.h.p.~as $n\to\infty$.
    Since this holds for abitrary $u\in\Usjo_{0,r}(\Omega)$, we get that:
    \begin{align}\label{eq: Sjo uniform - final lower bound}
        \sup_{w\in\Usjo_{0,r}(\Omega)} \frac1n \card{D(w\sqrt n) \cap \sigma_n}
        \ge \sup_{u\in\Usjo_{0,r}(\Omega)} F_{\Omega}(u) - 3\epsilon
    \end{align}
    holds w.h.p.~as $n\to\infty$.

    \smallskip

    Putting \Cref{eq: Sjo uniform - final upper bound,eq: Sjo uniform - final lower bound} together, we deduce that:
    \begin{align*}
        \sup_{w\in\Usjo_{0,r}(\Omega)} \frac1n \card{D(w\sqrt n) \cap \sigma_n}
        \,\longrightarrow\,
        \sup_{u\in\Usjo_{0,r}(\Omega)} F_{\Omega}(u)
    \end{align*}
    in probability as $n\to\infty$.
    Thanks to \eqref{eq: link LDS Sjo D}, this concludes the proof.
\end{proof}

\Cref{th: LSKV uniform} can in particular be applied to the domain $\Omega = \opencarre$.
Using \Cref{th: limit shape uniform PPP on arbitrary domain}, this yields the formula:
\begin{equation}\label{eq: F_LSKV via Sjo}
    F_\LSKV(r) = \sup_{u\in \Usjo_{0,r}\left(\opencarre\right)} F_{\opencarre}(u) .
\end{equation}

\subsection{Application to our setting}\label{sec: proof RS our setting}

We are now ready to prove \Cref{th: limit shape w/o fixed points}.
Thanks to \Cref{lem: iid outside diagonal t-cyclic}, the proof is very close to that of \Cref{th: limit shape uniform PPP on arbitrary domain};
the main difference is that we need to work individually on a family of rectangular domains which do not intersect the diagonal of $\opencarre$, and we combine them together with the same techniques as in the proof of \Cref{th: limit shape uniform PPP on arbitrary domain}.

\begin{proof}[Proof of \Cref{th: limit shape w/o fixed points}]
    Fix $r\ge0$.
    Let $\pts$ be a geometric construction of $\tau_n$.
    As in the proof of \Cref{th: limit shape uniform PPP on arbitrary domain}, we wish to bound $\card{D(w\sqrt n)\cap\pts}$ above and below, uniformly over $w\in\Usjo_{0,r}(\opencarre)$.
    Consider two essentially disjoint intervals $I,J$ of $[0,1]$ and write $\bigtriangleup$ for the symmetric difference of sets.
    Thanks to \Cref{cor: controlling number of points t-cyclic} and \Cref{lem: iid outside diagonal t-cyclic}, one can construct a homogeneous Poisson point process with intensity $n$ on $I\times J$, say $\sigma_n$, such that ${\card{\sigma_n\bigtriangleup\pts}}/{n} \to 0$ as $n\to\infty$ almost surely.

    Fix a large integer $K$, set $\epsilon:=1/K^2$, and define bands
    \begin{figure}
        \centering
        \includegraphics[scale=.5]{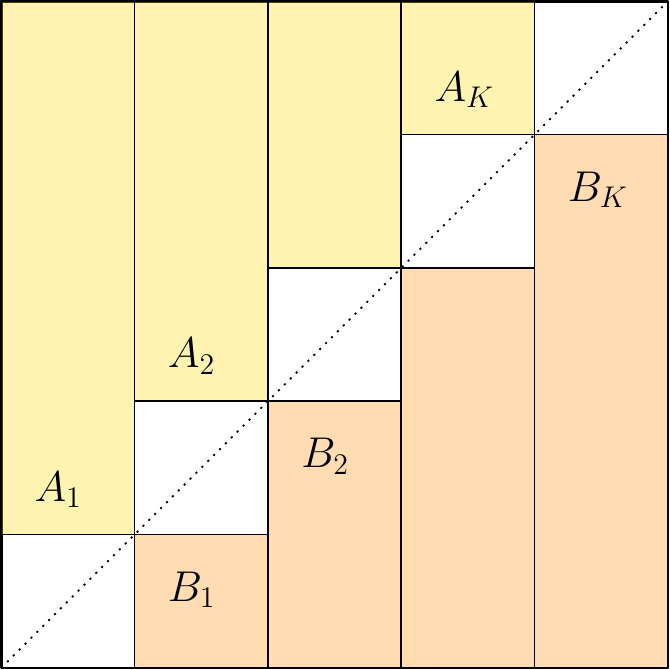}
        \caption{The bands defined in the proof of \Cref{th: limit shape w/o fixed points}. 
        On each $A_i$ and each $B_i$, $\pts$ is almost distributed like a homogeneous Poisson point process.}
        \label{fig: Sjo grid}
    \end{figure}
    \begin{equation*}
        A_i := \left( \frac{i-1}{K+1} , \frac{i}{K+1} \right) \times \left( \frac{i}{K+1} , 1 \right)
        \quad\text{ and }\quad
        B_i := \left( \frac{i}{K+1} , \frac{i+1}{K+1} \right) \times \left( 0 , \frac{i}{K+1} \right)
    \end{equation*}
    for $i\in[K]$, see \Cref{fig: Sjo grid}.
    These bands each fit in the setting of \Cref{lem: iid outside diagonal t-cyclic}, and they satisfy $\Leb(C) = \frac{1}{K+1}$ where $C := \opencarre \setminus \left(\bigcup_i \bar A_i\cup \bar B_i\right)$.
    Moreover, using \Cref{cor: controlling number of points t-cyclic} and \Cref{lem: iid outside diagonal t-cyclic} as above, one can construct a point process $\sigma_n$ on $\opencarre$ such that:
    \begin{enumerate}
        \item for each $i\in[K]$, $\sigma_n\cap A_i$ and $\sigma_n\cap B_i$ are both distributed like homogeneous Poisson point processes with intensity $n$;
        \item $\sigma_n$ coincides with $\pts$ on $C$;
        \item $\card{\sigma_n\bigtriangleup\pts}/{n} \to0$ as $n\to\infty$, almost surely.
    \end{enumerate}
    From here, the arguments for the lower and upper bounds differ.
    
    \paragraph{Upper bound:}
    For each $i\in[K]$, we can apply \Cref{lem: F serves as a global upper bound} to $\sigma_n$ on $A_i$.
    Hence for each $u\in\Usjo(A_i)$, there exists $\delta_{u,A_i}>0$ such that:
    \begin{equation*}
        \sup_{w\in\Usjo(A_i),\, \lVert w-u\rVert_{1,A_i} <\delta_{u,A_i}} \frac1n \card{D(w\sqrt n)\cap\sigma_n\cap A_i} < F_{A_i}(u) + \epsilon
    \end{equation*}
    holds w.h.p.~as $n\to\infty$.
    The same holds with $B_i$ instead of $A_i$.
    Moreover, a.s.~for any $n$:
    \begin{equation*}
        \sup_{w\in\Usjo(C)} \frac1n \card{D(w\sqrt n)\cap\sigma_n\cap C} < \frac1n \card{\sigma_n\cap C} = \frac1n \card{\pts\cap C} .
    \end{equation*}
    Moreover:
    \begin{equation*}
        F_{\opencarre}(u) = F_C(u) + \sum_{i=1}^K (F_{A_i}(u) + F_{B_i}(u)) \ge \sum_{i=1}^K \left(\big. F_{A_i}(u) + F_{B_i}(u) \right) .
    \end{equation*}
    By summing the previous inequalities we deduce that for each $u\in\Usjo\left(\opencarre\right)$, with $\delta_u := \min_{1\le i\le K} \delta_{u,A_i}\wedge\delta_{u,B_i}$:
    \begin{equation*}
        \sup_{w\in\Usjo\left(\opencarre\right),\, \lVert w-u\rVert_{1,\opencarre} <\delta_u} \frac1n \card{D(w\sqrt n)\cap\sigma_n} < F_{\opencarre}(u) + 2K\epsilon + \frac1n \card{\pts\cap C}
    \end{equation*}
    w.h.p.~as $n\to\infty$.
    Therefore:
    \begin{equation*}
        \sup_{w\in\Usjo\left(\opencarre\right),\, \lVert w-u\rVert_{1,\opencarre} <\delta_u} \frac1n \card{D(w\sqrt n)\cap\pts} < F_{\opencarre}(u) + 2K\epsilon + \frac1n \card{\pts\cap C} + \frac1n \card{\sigma_n\bigtriangleup\pts}
    \end{equation*}
    w.h.p.~as $n\to\infty$. 
    Recall that $\epsilon = 1/K^2$ and that $\frac1n \card{\sigma_n\bigtriangleup\pts} \to 0$ a.s.~as $n\to\infty$.
    Moreover, \Cref{cor: controlling number of points t-cyclic} implies that $\frac1n \card{\pts\cap C} \to \Leb(C) = 1/(K+1)$ a.s.~as $n\to\infty$. 
    Hence for each $u\in\Usjo\left(\opencarre\right)$:
    \begin{equation*}\label{eq: open local upper bound Sjo}
        \sup_{w\in\Usjo\left(\opencarre\right),\, \lVert w-u\rVert_{1,\opencarre} <\delta_u} \frac1n \card{D(w\sqrt n)\cap\pts} < F_{\opencarre}(u) + 4/K
    \end{equation*}
    w.h.p.~as $n\to\infty$.
    Using the fact that $\Usjo_{0,r}\left(\opencarre\right)$ is compact for the $L^1$ metric, as in the proof of \Cref{th: limit shape uniform PPP on arbitrary domain}, we get:
    \begin{equation}\label{eq: Sjo final upper bound}
        \sup_{w\in\Usjo_{0,r}(\opencarre)} \frac1n \card{D(w\sqrt n)\cap\pts} < \sup_{u\in \Usjo_{0,r}\left(\opencarre\right)} F_{\opencarre}(u) + 4/K 
    \end{equation}
    w.h.p.~as $n\to\infty$.

    \paragraph{Lower bound:}
    This bound is almost the same as the upper bound of \Cref{th: limit shape uniform PPP on arbitrary domain}, \textit{mutatis mutandis}.
    Fix an arbitrary $u\in \Usjo_{0,r}\left(\opencarre\right)$;
    we wish to exhibit a well-crafted $w_n\in \Usjo_{0,r}\left(\opencarre\right)$ such that $\frac1n \card{D(w_n\sqrt n)\cap\pts} \ge F_{\opencarre}(u) - \epsilon$ holds w.h.p.~as $n\to\infty$.

    First apply \Cref{lem: Sjo nice covering} to each domain $A_i\cup B_i$ for $i\in[K]$.
    For each $0<\iota<1/2$, let $S_\iota^i$ and $\Psjo_\iota^i$ be the resulting sets.
    Then define $S_\iota := \bigcup_{i\in[K]} S_\iota^i$ and $\Psjo_\iota := \bigcup_{i\in[K]} \Psjo_\iota^i$.
    Denote by $\widetilde \Psjo_\iota$ the set of well-behaved parallelograms in $\Psjo_\iota$.
    For each $P\in\Psjo_\iota$, let $P'$ denote the open parallelogram obtained by shrinking $P$ by a factor $1\m2\iota$ in width and height, as in \Cref{lem: boundaries of parallelogram}.

    Note that any parallelogram contained in $A_i\cup B_i$ is either wholly contained in $A_i$, or wholly contained in $B_i$.
    Since $\sigma_n$ is distributed like a homogeneous Poisson point process with intensity $n$ on each $A_i$ and each $B_i$, we can use \Cref{lem: F serves as a local lower bound}:
    for each $P\in \widetilde\Psjo_\iota$, w.h.p.~as $n\to\infty$, there exists
    $w_P^{(n)} \in \Usjo_{u(x_P,y_P) - (1-2\iota)\iota c_P , 2 (1-2\iota) \iota c_P}(P')$ such that
    \begin{equation}\label{eq: Sjo local control}
        \frac{1}{n \cdot \Leb(P')} \card{D\left(w_P^{(n)}\sqrt n\right) \cap \sigma_n \cap P'} \ge L(u_x^P u_y^P) - o_\iota(1)
    \end{equation}
    where $o_\iota(1) \to 0$ as $\iota\to0$, uniformly in $P$ and $n$.
    This also holds for any $P\in \Psjo_\iota \setminus \widetilde\Psjo_\iota$ and any $w_P^{(n)} \in \Usjo(P')$, for the same reason as in the proof of \Cref{th: limit shape uniform PPP on arbitrary domain}.
    
    Under the event that $w_P^{(n)}$ exists for all $P\in \widetilde\Psjo_\iota$, define a function $w_n$ on $\bigcup_{P\in \widetilde\Psjo_\iota} P' \,\cup\, \left( \opencarre \setminus \bigcup_{P\in \widetilde\Psjo_\iota} \mathring P \right)$ as follows:
    $w_n$ coincides with $w_P^{(n)}$ on $P'$ for each $P\in\widetilde \Psjo_\iota$, whereas is coincides with $u$ on $\opencarre \setminus \bigcup_{P\in \widetilde\Psjo_\iota} \mathring P$.
    As in the proof of \Cref{th: limit shape uniform PPP on arbitrary domain}, one can check that $w_n$ is doubly increasing.
    Then by \Cref{lem: Sjo patch doubly increasing}, this function can be extended to $w_n \in \Usjo_{0,r}\left(\opencarre\right)$.
    Therefore, using \eqref{eq: Sjo local control}, we can write w.h.p.~as $n\to\infty$:
    \begin{align}\label{eq: Sjo minoration 1}
        \frac1n \card{D(w_n\sqrt n) \cap \sigma_n}
        \ge \sum_{P\in\Psjo_\iota} \frac1n \card{D(w_n\sqrt n) \cap \sigma_n \cap P'}
        \ge \sum_{P\in\Psjo_\iota} \Leb(P') \left( L(u_x^P u_y^P) - o_\iota(1) \right) 
    \end{align}
    where $o_\iota(1)$ is independent of $P$ and $n$. 
    By item (g) of \Cref{lem: Sjo nice covering}:
    \begin{align}\label{eq: Sjo minoration 2}
        \sum_{P\in\Psjo_\iota} \Leb(P') L(u_x^P u_y^P)
        = (1\m2\iota)^2 \sum_{P\in\Psjo_\iota} \Leb(P) L(u_x^P u_y^P)
        \ge (1\m2\iota)^2 
        F_{ \bigcup_{\Psjo_\iota}\!\!P }(u) - o_\iota(1) .
    \end{align}
    Let us also use \cite[Lemma~4.9]{S23}, which states that $0\le L(\partial_x u \, \partial_y u)\le 1$.
    Along with items (a) and (c) of \Cref{lem: Sjo nice covering}, we get:
    \begin{equation*}
        \Leb\left(\big. \bigcup_{\Psjo_\iota} P \right) 
        \ge \Leb\left( \bigcup_{i\in[K]} A_i \cup B_i \right) - K\epsilon - o_\iota(1)
        = 1 - 2/K - o_\iota(1) .
    \end{equation*}
    This implies $F_{ \bigcup_{\Psjo_\iota}\!\!P }(u) \ge F_{\opencarre}(u) - 2/K - o_\iota(1)$.
    Along with \eqref{eq: Sjo minoration 1} and \eqref{eq: Sjo minoration 2}, this yields:
    \begin{align*}
        \frac1n \card{D(w_n\sqrt n) \cap \sigma_n}
        \ge F_{\opencarre}(u) - 2/K - o_\iota(1) 
    \end{align*}
    w.h.p.~as $n\to\infty$, where $o_\iota(1)$ is uniform in $n$.
    Take $\iota$ such that this $o_\iota(1)$ is less than $1/K$.
    Using the facts that this holds for any $u\in\Usjo_{0,r}\left(\opencarre\right)$ and that $\card{\sigma_n\bigtriangleup\pts}/n \to0$, we deduce that:
    \begin{equation}\label{eq: Sjo final lower bound}
        \sup_{w\in\Usjo_{0,r}(\opencarre)} \frac1n \card{D(w\sqrt n) \cap \pts}
        \ge \sup_{u\in\Usjo_{0,r}(\opencarre)} F_{\opencarre}(u) - 5/K
    \end{equation}
    w.h.p.~as $n\to\infty$.

    \smallskip

    Putting \Cref{eq: Sjo final upper bound,eq: Sjo final lower bound} together, along with \eqref{eq: F_LSKV via Sjo}, we get that:
    \begin{align*}
        \sup_{w\in\Usjo_{0,r}(\opencarre)} \frac1n \card{D(w\sqrt n) \cap \pts}
        \,\longrightarrow\,
        \sup_{u\in\Usjo_{0,r}(\opencarre)} F_{\opencarre}(u)
        = F_\LSKV(r)
    \end{align*}
    in probability as $n\to\infty$.
    Thanks to \eqref{eq: link LDS Sjo D}, this concludes the proof.
\end{proof}


\begin{rem}
    With the same arguments as in the proof of \cite[Lemma~10.1]{S23}, we could refine our proof of \Cref{th: limit shape w/o fixed points} to show that the longest decreasing subsequences of $\pts$ are located on the same limit curves as in the uniform case (see \cite[Theorem~10.2 (a)]{S23} for a precise statement).
\end{rem}

\begin{proof}[Proof of \Cref{cor: limit shape conjugation-invariant}]
    Let $\pts$ be a geometric construction of $\tau_n$.
    Since each decreasing subsequence of $\tau_n$ contains at most one fixed point, we can write:
    \begin{equation*}
        \LDS_{k}(\check\tau_n)
        \le \LDS_{k}(\tau_n)
        \le \LDS_{k}(\check\tau_n) + k
    \end{equation*}
    for any $k\ge0$.
    Hence:
    \begin{equation*}
        \prob{ \card{\frac{1}{\check n} \LDS_{r\sqrt{\check n}}(\tau_n) - F_\mathrm{LSKV}(r)} > \epsilon }
        \le \prob{ \card{\frac{1}{\check n} \LDS_{r\sqrt{\check n}}(\check\tau_n) - F_\mathrm{LSKV}(r)} > \epsilon_n }
    \end{equation*}
    where $\epsilon_n := \epsilon - \frac{r\sqrt{\check n}}{\check n} \cv{n}{\infty} \epsilon$ in probability.
    Then by \Cref{th: limit shape w/o fixed points} and dominated convergence theorem:
    \begin{equation*}
        \prob{ \card{\frac{1}{\check n} \LDS_{r\sqrt{\check n}}(\check\tau_n) - F_\mathrm{LSKV}(r)} > \epsilon_n }
        \\= \mean{ \prob{ \card{\frac{1}{\check n} \LDS_{r\sqrt{\check n}}(\check\tau_n) - F_\mathrm{LSKV}(r)} > \epsilon_n \quad\Bigg| t^{(n)} }}
        \cv{n}{\infty} 0 ,
    \end{equation*}
    which concludes the proof.
\end{proof}

\section{Proofs of the results on records}
\label{sec: proof records}

\subsection{High records}

\begin{lem}\label{lem: leftmost and upmost points in t-cyclic}
    For each $n$, let $t^{(n)}$ be a cycle type of size $n$ such that $t_1^{(n)}=0$, and $\pts$ be a geometric construction.
    Set $C := [0,1/2]\times[1/2,1]$.
    Define $Z_\text{left} = \left(U_\text{left},V_\text{left}\right)$ as the leftmost point in $\pts$, $Z_\text{left}^C = \left(U_\text{left}^C,V_\text{left}^C\right)$ as the leftmost point of $\pts\cap C$, and let $Z_\text{up}$, $Z_\text{up}^C$ be the upmost analogues.
    Those points are well-defined w.h.p.~as $n\to\infty$ and:
    \begin{equation*}
        \left(\big. nU_\text{left} , n\left(1-V_\text{up}\right) \right) \cv{n}{\infty} \left(E,E'\right)
        \quad;\quad
        n U_\text{left}^C \cv{n}{\infty} 2E
        \quad;\quad
        n \left(1-V_\text{up}^C\right) \cv{n}{\infty} 2E
    \end{equation*}
    in distribution, where $E,E'$ are independent $\Exp{1}$ variables.
    Furthermore, w.h.p.~as $n\to\infty$ the inequalities $U_\text{left} \le U_\text{left}^C \le V_\text{left}$ hold.
\end{lem}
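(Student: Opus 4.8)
The plan is to obtain everything from the structural properties of the geometric construction gathered in \Cref{sec: prelim}, combined with the classical extreme-value asymptotics for i.i.d.\ uniform variables. Indeed, by construction both the set of abscissas and the set of ordinates of the points of $\pts$ equal $\mathcal U=\{U_i:i\in\Vert_t\}$, a collection of $n$ i.i.d.\ $\Unif{[0,1]}$ variables which are a.s.\ pairwise distinct, so $Z_\text{left}$ and $Z_\text{up}$ are a.s.\ well defined with $U_\text{left}=\min\mathcal U$ and $V_\text{up}=\max\mathcal U$. The joint convergence $(nU_\text{left},n(1-V_\text{up}))\to(E,E')$ is then immediate from
\[ \prob{nU_\text{left}>s,\ n(1-V_\text{up})>t}=\Big(1-\tfrac{s+t}{n}\Big)^{n}\cv{n}{\infty}e^{-s}e^{-t}\qquad(s,t\ge0), \]
which in particular exhibits the asymptotic independence of $E$ and $E'$.

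For the corner statistics I would set $N_C:=\card{\pts\cap C}$. Since $t_1^{(n)}=0$ we have $\pts=\check\pts$, so \Cref{cor: controlling number of points t-cyclic} gives $\prob{N_C=0}\le e^{-cn}$ --- whence $Z_\text{left}^C$ and $Z_\text{up}^C$ are well defined w.h.p. --- and $N_C/n\to\Leb(C)=1/4$ in probability. By \Cref{lem: iid outside diagonal t-cyclic} applied to $C=[0,1/2]\times[1/2,1]$, conditionally on $N_C=m\ge1$ the set $\pts\cap C$ consists of $m$ i.i.d.\ uniform points in $C$; hence $U_\text{left}^C$ is the minimum of $m$ i.i.d.\ $\Unif{[0,1/2]}$ variables and $\prob{nU_\text{left}^C>u\mid N_C=m}=(1-2u/n)^m$ for $n\ge2u$. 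Averaging over $N_C$ and using $N_C/n\to1/4$ with bounded convergence yields $\prob{nU_\text{left}^C>u}\to e^{-u/2}$, i.e.\ $nU_\text{left}^C\to\Exp{1/2}$, which is distributed as $2E$. The convergence $n(1-V_\text{up}^C)\to2E$ follows by the same computation, $V_\text{up}^C$ being the maximum of $N_C$ i.i.d.\ $\Unif{[1/2,1]}$ variables.

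It remains to establish the chain of inequalities. The bound $U_\text{left}\le U_\text{left}^C$ holds deterministically on $\{N_C\ge1\}$, since its left-hand side is a minimum of abscissas over all of $\pts$ and its right-hand side a minimum over the sub-family $\pts\cap C$. For $U_\text{left}^C\le V_\text{left}$ I would use the conditional re-description of $\pts$ given $\mathcal U$ from \Cref{lem: conditionally on coordinates}: writing $U_{(1)}<\dots<U_{(n)}$ for the order statistics of $\mathcal U$ and $\rho\colon[n]\to\Vert_t$ for the uniform bijection appearing there, the leftmost point of $\pts$ is $\big(U_{(1)},U_{(J)}\big)$ with $J:=\rho^{-1}\big(\s(\rho(1))\big)$; the hypothesis $t_1^{(n)}=0$ forces $\s(\rho(1))\ne\rho(1)$, so $J$ is uniform on $\{2,\dots,n\}$ and independent of the order statistics. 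Therefore $V_\text{left}=U_{(J)}$ and
\[ \prob{V_\text{left}\le\epsilon}=\frac{1}{n-1}\sum_{k=2}^{n}\prob{U_{(k)}\le\epsilon}\le\frac{1}{n-1}\,\mean{\card{\{i:U_i\le\epsilon\}}}=\frac{n\epsilon}{n-1}\le2\epsilon . \]
Combining this with $nU_\text{left}^C=\O_\P(1)$ from the previous step, for each fixed $\epsilon>0$ we obtain $\prob{U_\text{left}^C>V_\text{left}}\le\prob{U_\text{left}^C>\epsilon}+\prob{V_\text{left}\le\epsilon}=o(1)+2\epsilon$, and letting $\epsilon\to0$ gives $U_\text{left}^C\le V_\text{left}$ w.h.p.

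I do not expect any single hard estimate: the main point is to correctly marry the ``i.i.d.\ outside the diagonal'' structure, which controls the corner extremes $U_\text{left}^C$ and $V_\text{up}^C$, with the uniform-bijection description of $\pts$ given its coordinates, which controls the ordinate $V_\text{left}$ of the leftmost point; one must also track where $t_1^{(n)}=0$ is used, as this is precisely what keeps the leftmost point off the diagonal and makes $V_\text{left}$ spread out like a uniform variable.
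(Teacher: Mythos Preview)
Your proof is correct and follows essentially the same route as the paper: the joint law of $(nU_\text{left},n(1-V_\text{up}))$ via the survival function of the extreme order statistics, the corner extremes via \Cref{lem: iid outside diagonal t-cyclic} and \Cref{cor: controlling number of points t-cyclic}, and the final inequalities by comparing the scales of $U_\text{left}^C$ and $V_\text{left}$. Your handling of $V_\text{left}$ via \Cref{lem: conditionally on coordinates} is in fact more careful than the paper's, which simply asserts that $V_\text{left}$ is $\Unif{[0,1]}$ --- a statement that is only approximately true (e.g.\ for a single $2$-cycle $V_\text{left}=\max(U_1,U_2)$), whereas your bound $\prob{V_\text{left}\le\epsilon}\le 2\epsilon$ is exact and suffices.
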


\begin{proof}
    Recall from the geometric construction that the lists of x- and y-coordinates of points in $\pts$ are both equal to the same family $\{U_i\}_{i\in\Vert_t}$ of i.i.d.~$\Unif{[0,1]}$ variables.
    Thus $U_\text{left} =: U_{(1)}$ is their minimum and $V_\text{up} =: U_{(n)}$ is their maximum.
    For any $s,t\ge0$ we have:
    \begin{equation*}
        \prob{nU_{(1)} \ge s \;,\; n\left(1-U_{(n)}\right) \ge t}
        = \prob{\big. \forall i,\; U_i\in[s/n,1-t/n]}
        = (1-(s+t)/n)^n
        \cv{n}{\infty} e^{-s}e^{-t} .
    \end{equation*}
    This proves the first convergence in distribution.
    For the second one, use \Cref{lem: iid outside diagonal t-cyclic} and write, conditionally given $\card{\pts\cap C}$:
    \begin{equation*}
        \prob{nU_\text{left}^C \ge s}
        = \left( 1 - \frac{s/(2n)}{1/4} \right)^{\card{\pts\cap C}}
        = ( 1 - 2s/n )^{\card{\pts\cap C}} .
    \end{equation*}
    According to \Cref{cor: controlling number of points t-cyclic}, $\card{\pts\cap C}$ is concentrated around $n/4$.
    This readily implies $\prob{nU_\text{left}^C \ge s} \to e^{-s/2}$ as $n\to\infty$, and proves the second convergence in distribution.
    The third one follows similarly.
    
    For the last claim, it suffices to note that $U_\text{left}$ and $U_\text{left}^C$ are both $o_\P(1)$ whereas $V_\text{left}$ has distribution $\Unif{[0,1]}$, and $U_\text{left} \le U_\text{left}^C$ by definition.
\end{proof}

\begin{proof}[Proof of \Cref{th: high records t-cyclic}]
    Let $\pts$ be a geometric construction of $\tau_n$.
    Decompose it into its points outside the diagonal $\check\pts$ and its points on the diagonal $\pts_\Delta$.
    We shall use the notation of \Cref{lem: leftmost and upmost points in t-cyclic}, applied to the set $\check\pts$ of size $\check n := n-t_1^{(n)}$.
    According to this lemma, w.h.p.~the point $Z_\text{left}$ is above the diagonal $\Delta$ and the rectangle $[U_\text{left} , U_\text{left}^C]\times[V_\text{left} , V_\text{left}^C]$ does not intersect $\Delta$.
    Likewise, w.h.p.~the point $Z_\text{up}$ is above the diagonal and the rectangle $[U_\text{up}^C , U_\text{up}]\times[V_\text{up}^C , V_\text{up}]$ does not intersect $\Delta$.
    Under those events, the records of $\pts$ are all contained in
    \begin{equation*}
        \left( \Delta \cap [0,U_\text{left}]^2 \right)
        \quad\bigcup\quad 
        [U_\text{left} , U_\text{left}^C]\times[V_\text{left} , V_\text{left}^C]
        \quad\bigcup\quad 
        C
        \quad\bigcup\quad 
        [U_\text{up}^C , U_\text{up}]\times[V_\text{up}^C , V_\text{up}]
        \quad\bigcup\quad 
        \left( \Delta \cap [V_\text{up},1]^2 \right) .
    \end{equation*}
    \begin{figure}
        \centering
        \includegraphics[scale=.48]{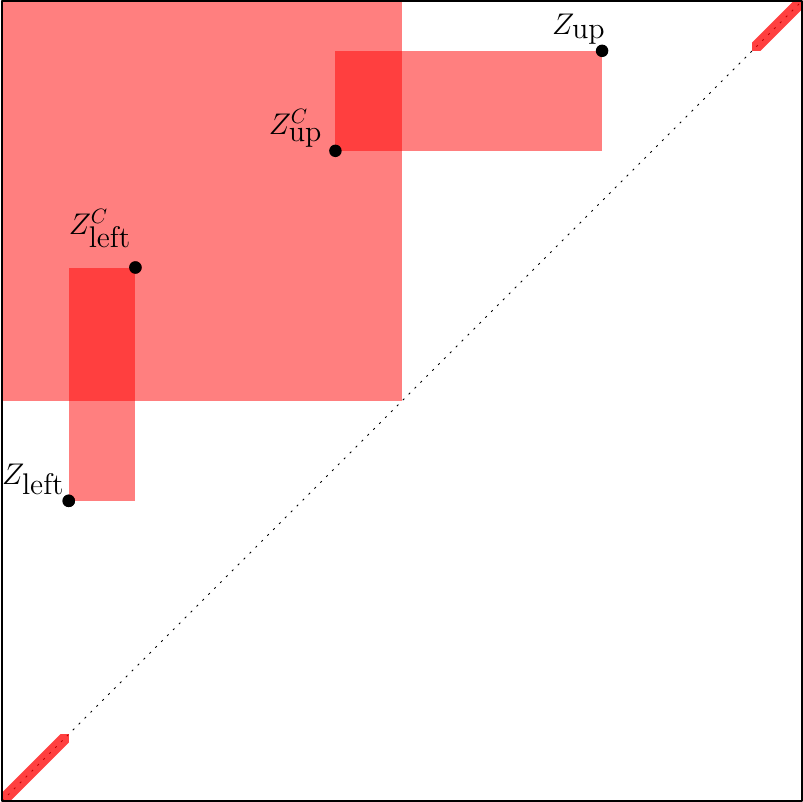}
        \hspace{7em}
        \includegraphics[scale=.48]{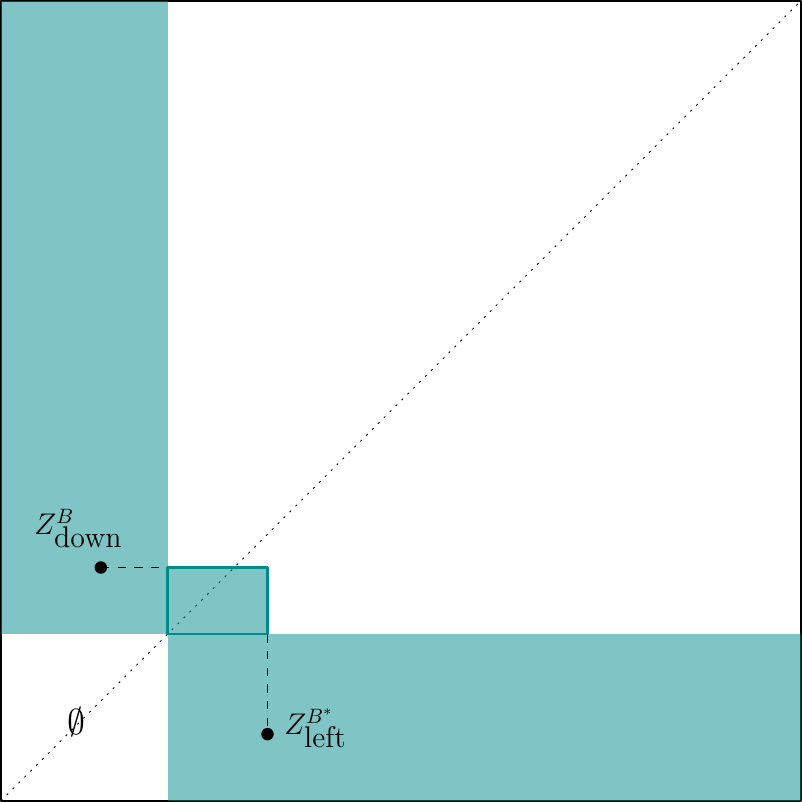}
        \caption{The zones in which we can find records (high ones on the left and low ones on the right).}
        \label{fig: zones records}
    \end{figure}
    This zone is shown on the left-hand side of \Cref{fig: zones records}.
    Note that the rectangles $[U_\text{left} , U_\text{left}^C]\times[V_\text{left} , V_\text{left}^C]$ and $[U_\text{up}^C , U_\text{up}]\times[V_\text{up}^C , V_\text{up}]$ may be flat, i.e.~the identities $U_\text{left} = U_\text{left}^C$ and $V_\text{up}^C = V_\text{up}$ may occur with non-negligible probability.
    
    Invertly, all points of $\pts_\Delta$ in $[0,U_\text{left}]^2\cup[V_\text{up},1]^2$, as well as the ones which are high records in $\pts\cap C$, are high records in $\pts$.
    From the previous two observations we deduce the following inequality w.h.p.~as $n\to\infty$:
    \begin{equation*}
        \card{ \hrec{\pts} - \left(\Big. \hrec{\check\pts\cap C} + S_n \right) }
        \\\le \card{\check\pts\cap \left([U_\text{left} , U_\text{left}^C]\times[V_\text{left} , V_\text{left}^C]\right) }
        + \card{\check\pts\cap \left([U_\text{up}^C , U_\text{up}]\times[V_\text{up}^C , V_\text{up}]\right) } 
    \end{equation*}
    where $S_n := \card{\pts_\Delta\cap[0,U_\text{left}]^2} + \card{\pts_\Delta\cap[V_\text{up},1]^2}$.
    For any $M>0$, we may apply \Cref{cor: controlling number of points t-cyclic} to the rectangle $[0,M/\check n] \times[0,1]$ to obtain $\card{ \check \pts \cap \left( [0,M/\check n] \times[0,1] \right) } \le 2M$ w.h.p.~as $n\to\infty$.
    Since $\check n U_\text{left}^C$ is tight by \Cref{lem: leftmost and upmost points in t-cyclic}, it holds that $U_\text{left}^C \le M/\check n$ with probability tending to $1$ as $M\to\infty$, uniformly in $n$. 
    This implies that the random variable $\card{\check\pts\cap \left([U_\text{left} , U_\text{left}^C]\times[V_\text{left} , V_\text{left}^C]\right)}$ is a $\O_\P(1)$.
    With the same reasoning applied to $1-V_\text{up}^C$ and the corresponding rectangle, we deduce:
    \begin{equation}\label{eq: first approx high records}
        \card{ \hrec{\pts} - \left(\Big. \hrec{\check\pts\cap C} + S_n \right) }
        = \O_\P(1) .
    \end{equation}
    Now it remains to analyze both terms in the sum.
    According to \Cref{lem: iid outside diagonal t-cyclic} and \Cref{cor: controlling number of points t-cyclic}, $\check\pts\cap C$ is a family of i.i.d.~uniform points in $C$, with size $\check n/{4} + o_\P\left(\check n\right)$.
    We can thus apply \Cref{th: records uniform} to deduce
    \begin{equation}\label{eq: square term high records}
        \frac{\hrec{\check\pts\cap C} - \log \check n}{\sqrt{\log \check n}}
        \cv{n}{\infty} \Normal{0}{1}
    \end{equation}
    in distribution.
    Let us turn to the fixed points.
    $\check\pts$ and $\pts_\Delta$ are independent, so $(U_\text{left},V_\text{up})$ and $\pts_\Delta$ are as well.
    Therefore, conditionally given $(U_\text{left},V_\text{up})$, $S_n$ follows a $\Binomial{t_1^{(n)}}{U_\text{left}\p1\m V_\text{up}}$ law.
    Recall from \Cref{lem: leftmost and upmost points in t-cyclic} that 
    \[
        \check n\left(U_\text{left}+1- V_\text{up}\right) \cv{n}{\infty} \Gam{2}{1} := \Exp{1}*\Exp{1} 
    \]
    in distribution, where $\Exp{1}*\Exp{1}$ denotes the law of the sum of two independent $\Exp{1}$ random variables.
    If ${t_1^{(n)}}/{\check n} = \O(1)$ as $n\to\infty$ then $S_n=\O_\P(1)$ (use e.g.~the Bienaym\'e–-Chebyshev inequality), whereas if ${t_1^{(n)}}/{\check n} \to \infty$ as $n\to\infty$ then
    \begin{equation}\label{eq: diagonal term high records}
        \frac{\check n}{t_1^{(n)}} S_n \cv{n}{\infty} \Gam{2}{1}
    \end{equation}
    in distribution.
    Now we can use \Cref{eq: first approx high records,eq: square term high records,eq: diagonal term high records} to identify three regimes in the asymptotics of $\hrec{\pts}$.
    
    \paragraph{``Mostly uniform'' regime:}
    Assume $\frac{\check n}{n/\sqrt{\log n}} \to \infty$.
    Then $\log n \sim \log \check n$ as $n\to\infty$, and since $t_1^{(n)} \le n$, we have $\frac{t_1^{(n)}}{\check n \sqrt{\log\check n}} \to 0$.
    Consequently the fluctuations of $\hrec{\check \pts \cap C}$ dominate, and
    \begin{equation*}
        \frac{\hrec{\pts} - \log \check n}{\sqrt{\log \check n}}
        \cv{n}{\infty} \Normal{0}{1} .
    \end{equation*}
    
    \paragraph{``Mostly fixed points'' regime:}
    Assume $\frac{\check n}{n/\sqrt{\log n}} \to 0$ as $n\to\infty$.
    Since $\log \check n \le \log n$ and $t_1^{(n)} = n - \check n$, we have $\frac{t_1^{(n)}}{\check n \sqrt{\log\check n}} \to \infty$.
    Consequently the fluctuations of $S_n$ dominate, and
    \begin{equation*}
        \frac{\check n}{t_1^{(n)}}\left(\hrec{\pts} - \log \check n \right)
        \cv{n}{\infty} \Gam{2}{1} .
    \end{equation*}

    \paragraph{``Intermediate'' regime:}
    Assume $\frac{\check n}{n/\sqrt{\log n}} \to \alpha \in (0,\infty)$.
    Then $\log \check n \sim \log n$ as $n\to\infty$, and since $t_1^{(n)} = n - \check n$, we deduce $\frac{t_1^{(n)}}{\check n\sqrt{\log\left(\check n\right)}} \to 1/\alpha$.
    Thus we need to understand the interplay between the limit laws of \Cref{eq: square term high records,eq: diagonal term high records}.
    To do this, notice that conditionally given $\left( U_\text{left} , V_\text{up} \right)$, the variables $S_n$ and $\check\pts\cap C$ are independent.
    Recall that under this conditioning, $S_n$ follows a $\Binomial{t_1^{(n)}}{U_\text{left}\p1\m V_\text{up}}$ law.
    According to \Cref{lem: conditionally on coordinates}, the conditional law of $\perm{\check\pts\cap C}$ given $\left( U_\text{left} , V_\text{up} \right) = \left( U_{(1)} , U_{(n)} \right)$ and $\card{\check\pts\cap C}$ is uniform.
    From this property we deduce independence of the limit laws in \Cref{eq: diagonal term high records,eq: square term high records}, whence:
    \begin{multline*}
        \frac{\hrec{\pts} - \log \check n}{\frac{t_1^{(n)}}{\check n} + \sqrt{\log \check n}}
        = \frac{\hrec{\check\pts\cap C} - \log \check n}{(1+1/\alpha+o(1))\sqrt{\log \check n}}
        + \frac{S_n}{(1+\alpha+o(1))\frac{t_1^{(n)}}{\check n}}
        + o_\P(1)
        \\\cv{n}{\infty}
        \frac{\alpha}{\alpha+1}\Normal{0}{1} * \frac{1}{1+\alpha}\Gam{2}{1}
    \end{multline*}
    in distribution, as announced.
\end{proof}

\subsection{Low records}

\begin{lem}\label{lem: downmost and leftmost points in bands}
    For each $n$, let $t^{(n)}$ be a cycle type of size $n$ such that $t_1^{(n)}=0$, and $\pts$ be a geometric construction.
    Fix $\epsilon>0$ and define $B_n := \left[0, \epsilon/ \sqrt{n}\right] \times \left[\epsilon/ \sqrt{n} ,1\right]$ and $B_n^* := \left[\epsilon/ \sqrt{n} ,1\right] \times \left[0, \epsilon/ \sqrt{n}\right]$.
    Then
    \begin{equation*}
        \frac{\card{\pts\cap B_n}}{\epsilon\sqrt n} \cv{n}{\infty} 1
        \quad;\quad
        \frac{\card{\pts\cap B_n^*}}{\epsilon\sqrt n} \cv{n}{\infty} 1
    \end{equation*}
    in probability.
    In particular w.h.p.~we can define $Z_\text{down}^{B} = \left(U_\text{down}^B , V_\text{down}^B\right)$ as the downmost point in $\pts\cap B_n$ and $Z_\text{left}^{B^*} = \left(U_\text{left}^{B^*} , V_\text{left}^{B^*}\right)$ as the leftmost point in $\pts\cap B_n^*$, and they satisfy:
    \begin{equation*}
        {\epsilon\sqrt{n}}V_\text{down}^B -\epsilon^2 \cv{n}{\infty} \Exp{1}
        \quad;\quad
        {\epsilon\sqrt{n}}U_\text{left}^{B^*} -\epsilon^2 \cv{n}{\infty} \Exp{1}
    \end{equation*}
    in distribution.
\end{lem}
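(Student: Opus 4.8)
The plan is to treat the two bands by the same argument, deducing the $B_n^*$-statements from the $B_n$-statements via the reflection across the diagonal, and to split the proof of the $B_n$-statements into a counting estimate and a standard extreme-value computation.

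For the counting estimate, I would first record that since $t_1^{(n)}=0$ we have $\check\pts=\pts$, $\check n=n$, and $\Leb(B_n)=\frac{\epsilon}{\sqrt n}\big(1-\frac{\epsilon}{\sqrt n}\big)=\frac{\epsilon\sqrt n-\epsilon^2}{n}$. Then I would invoke the distributional part of \Cref{cor: controlling number of points t-cyclic}: $\card{\pts\cap B_n}$ has the law of a correlated sum $\sum_{i=1}^3\Binomial{n_i}{\Leb(B_n)}$ with $\card{n_i-n/3}\le1$. The Chernoff-type bound stated in that corollary is useless here, since $\Leb(B_n)$ vanishes; instead I would use a second moment estimate. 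One has $\mean{\card{\pts\cap B_n}}=n\Leb(B_n)=\epsilon\sqrt n-\epsilon^2$, while each binomial summand has variance at most $n_i\Leb(B_n)=\O(\sqrt n)$, so $\var{\card{\pts\cap B_n}}=\O(\sqrt n)=o(n)$. Chebyshev's inequality then yields $\card{\pts\cap B_n}/(\epsilon\sqrt n)\to1$ in probability, and the same for $B_n^*$ since $\Leb(B_n^*)=\Leb(B_n)$; in particular both bands are nonempty w.h.p., so $Z_\text{down}^B$ and $Z_\text{left}^{B^*}$ are well defined w.h.p.

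For the law of $V_\text{down}^B$, I would note that $I:=[0,\epsilon/\sqrt n]$ and $J:=[\epsilon/\sqrt n,1]$ are essentially disjoint, so by \Cref{lem: iid outside diagonal t-cyclic} the points of $\pts\cap B_n=\pts\cap(I\times J)$ are, conditionally on their number $m$, i.i.d.\ uniform in $B_n$. Hence, given $\card{\pts\cap B_n}=m\ge1$, the ordinate $V_\text{down}^B$ is the minimum of $m$ i.i.d.\ $\Unif{[\epsilon/\sqrt n,1]}$ variables, and a short computation gives, for $s\ge0$ and $n$ large enough,
\begin{equation*}
    \prob{\epsilon\sqrt n\,V_\text{down}^B-\epsilon^2>s \mid \card{\pts\cap B_n}=m}=\left(1-\frac{s}{\epsilon\sqrt n-\epsilon^2}\right)^{m}.
\end{equation*}
Taking $m=\card{\pts\cap B_n}$ and using the counting estimate (so that $m/(\epsilon\sqrt n-\epsilon^2)\to1$ in probability), the right-hand side converges to $e^{-s}$ in probability; being bounded by $1$, bounded convergence yields $\prob{\epsilon\sqrt n\,V_\text{down}^B-\epsilon^2>s}\to e^{-s}$, i.e.\ convergence in distribution to $\Exp{1}$. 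For the starred band I would use the reflection $R:(x,y)\mapsto(y,x)$: reversing every cycle and relabelling the underlying i.i.d.\ family shows that $R(\pts)$ is again a geometric construction of $t^{(n)}$, with $R(B_n)=B_n^*$, and under $R$ the downmost point of $R(\pts)\cap B_n$ corresponds to the leftmost point of $\pts\cap B_n^*$; applying the two previous steps to $R(\pts)$ then gives the $B_n^*$-statements.

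The only step that needs genuine care is the counting estimate: because $\Leb(B_n)$ decays like $1/\sqrt n$, the off-the-shelf concentration inequality of \Cref{cor: controlling number of points t-cyclic} does not apply, and one must fall back on the explicit (correlated) binomial description together with the second-moment argument above. The rest is a routine computation with order statistics of uniform variables.
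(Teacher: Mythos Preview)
Your proposal is correct and follows essentially the same route as the paper: both invoke the binomial description from \Cref{cor: controlling number of points t-cyclic} for the counting estimate and then use \Cref{lem: iid outside diagonal t-cyclic} for the extreme-value computation on $V_\text{down}^B$. You are in fact a bit more careful than the paper, which simply writes ``standard concentration inequalities'' where you rightly point out that the stated Chernoff bound (with $c$ depending on $\Leb(C)$) is not directly usable and substitute a clean second-moment argument; for the starred band the paper just says ``the same holds'', whereas your reflection $R(\pts)$ justification makes this explicit.
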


\begin{proof}
    By \Cref{cor: controlling number of points t-cyclic}, $\card{\pts\cap B_n}$ is distributed like a sum of three $\Binomial{\lfloor n/3\rfloor \pm1}{\left(1-\epsilon/\sqrt n\right) \big. \epsilon/\sqrt n}$, and likewise for $B_n^*$.
    The first claim readily follows from standard concentration inequalities.
    Then w.h.p.~as $n\to\infty$, the sets $\pts\cap B_n$ and $\pts\cap B_n^*$ are non-empty and $Z_\text{down}^B$, $Z_\text{left}^{B^*}$ are well-defined.
    According to \Cref{lem: iid outside diagonal t-cyclic}, conditionally given $\card{\pts\cap B_n}$, the set $\pts\cap B_n$ consists of i.i.d.~uniform points in $B_n$.
    We can thus write, for any $s>0$ and large enough $n$, conditionally given $\card{\pts\cap B_n}$:
    \begin{equation*}
        \prob{\epsilon\sqrt{n}V_\text{down}^B > s+\epsilon^2}
        = \prob{V_\text{down}^B > \frac{s}{\epsilon\sqrt n}+\epsilon/\sqrt{n}}
        = \left(1 - \frac{1}{1-\epsilon/\sqrt n}\frac{s}{\epsilon\sqrt n}\right)^{\card{\pts\cap B_n}} .
    \end{equation*}
    Hence $\prob{\epsilon\sqrt{n}V_\text{down}^B > s+\epsilon^2} \to e^{-s}$ as $n\to\infty$.
    The same holds for $U_\text{left}^{B^*}$, and this concludes the proof.
\end{proof}

\begin{proof}[Proof of \Cref{th: low records t-cyclic}]
    For each $n$, let $t^{(n)}$ be a cycle type of size $n$ and $\pts$ be a geometric construction of $t^{(n)}$.
    Fix $\epsilon>0$.

    \paragraph{Case 1:}
    Suppose $t_1^{(n)} = \O\left(\sqrt n\right)$.
    
    Define $C_n := \left[ 0,\epsilon/\sqrt{\check n} \right]^2$.
    Then $\card{\check\pts\cap C_n}$ is distributed like a sum of three $\Binomial{\lfloor\check n/3\rfloor \pm1}{\epsilon^2/\check n}$ variables by \Cref{cor: controlling number of points t-cyclic}, and $\card{\pts_\Delta\cap C_n}$ follows a $\Binomial{t_1^{(n)}}{\epsilon/\sqrt{\check n}}$ law.
    Therefore, there exists $\delta(\epsilon) = \Landau{o}{\epsilon\to0}{1}$, independent of $n$, such that:
    \begin{equation}\label{eq: proba empty square low records}
        \limsup_{n\to\infty} \prob{\pts\cap C_n \ne \emptyset} \le \delta(\epsilon) .
    \end{equation}
    Define $B_n := \left[0, \epsilon/ \sqrt{\check n}\right] \times \left[\epsilon/ \sqrt{\check n} ,1\right]$ and $B_n^* := \left[\epsilon/ \sqrt{\check n} ,1\right] \times \left[0, \epsilon/ \sqrt{\check n}\right]$.
    According to \Cref{lem: downmost and leftmost points in bands} we can define w.h.p.~$Z_\text{down}^{B} = \left(U_\text{down}^B , V_\text{down}^B\right)$ as the downmost point in $\check\pts\cap B_n$ and $Z_\text{left}^{B^*} = \left(U_\text{left}^{B^*} , V_\text{left}^{B^*}\right)$ as the leftmost point in $\check\pts\cap B_n^*$.
    See the right-hand side of \Cref{fig: zones records} for a representation.

    Define the event $E_n := \{\pts\cap C_n = \emptyset\} \cap \left\{\check\pts\cap B_n \ne \emptyset\right\} \cap \left\{\check\pts\cap B_n^* \ne \emptyset\right\}$.
    Under this event, all points which are low records in $\check\pts\cap B_n$ or in $\check\pts\cap B_n^*$ are also low records in $\pts$.
    Reciprocally, all low records of $\pts$ lie in $B_n$, in $B_n^*$, or in $\left[ \epsilon/\sqrt{\check n} , U_\text{left}^{B^*} \right]\times\left[ \epsilon/\sqrt{\check n} , V_\text{down}^B\right]$.
    Hence under $E_n$:
    \begin{equation}\label{eq: low records two zones}
        \card{\Big. \lrec{\pts} - \lrec{\check\pts\cap B_n} - \lrec{\check\pts\cap B_n^*} } 
        \le \card{\pts\cap\left( \left[ \epsilon/\sqrt{\check n} , U_\text{left}^{B^*} \right]\times\left[ \epsilon/\sqrt{\check n} , V_\text{down}^B \right] \right)} .
    \end{equation}
    Using \Cref{lem: downmost and leftmost points in bands} and the fact that $t_1^{(n)} = \O\left(\sqrt n\right)$ we obtain:
    \begin{equation*}
        \card{\pts\cap\left( \left[ \epsilon/\sqrt{\check n} , U_\text{left}^{B^*} \right]\times\left[ \epsilon/\sqrt{\check n} , V_\text{down}^B \right] \right)} = \O_\P(1)
    \end{equation*}
    as $n\to\infty$, for fixed $\epsilon>0$.
    Thus it suffices to study the records in $B_n$ and $B_n^*$.
    We can apply \Cref{lem: iid outside diagonal t-cyclic}, \Cref{lem: downmost and leftmost points in bands} and \Cref{th: records uniform} to obtain
    \begin{equation*}
        \frac{\lrec{\check\pts\cap B_n}}{\frac12\log\left(n\right)}
        \cv{n}{\infty} 1
    \end{equation*}
    in probability, and likewise for $B_n^*$.
    Using \Cref{eq: low records two zones,eq: proba empty square low records} we deduce that for any $\eta>0$:
    \begin{equation*}
        \limsup_{n\to\infty}\prob{ \card{\frac{\lrec{\pts}}{\frac12\log\left(n\right)} - 2} > \eta }
        \le \delta(\epsilon) .
    \end{equation*}
    Since this holds for any $\epsilon>0$, the desired convergence in probability follows.

    \paragraph{Case 2:}
    Suppose $\sqrt n = o\left(t_1^{(n)}\right)$.

    Let $Z_\Delta = (U_\Delta,U_\Delta)$ be the leftmost point of $\pts_\Delta$.
    Then it is standard that
    \begin{equation}\label{eq: limit law U Delta}
        t_1^{(n)} U_\Delta \cv{n}{\infty} \Exp{1}
    \end{equation}
    in distribution.
    By \Cref{cor: controlling number of points t-cyclic} and since ${\check n}/{(t_1^{(n)})^2} \to 0$, w.h.p.~it holds that $\check\pts\cap [0,U_\Delta]^2 = \emptyset$.
    Define two bands $B_\Delta := [0,U_\Delta]\times[U_\Delta,1]$ and $B_\Delta^* := [U_\Delta,1]\times[0,U_\Delta]$.
    Then under the event $\left\{ \check\pts\cap [0,U_\Delta]^2 = \emptyset \right\}$ we may write:
    \begin{equation}\label{eq: low records Delta}
        \lrec{\pts} = 1 + \lrec{\check\pts\cap B_\Delta} + \lrec{\check\pts\cap B_\Delta^*} .
    \end{equation}
    Using \Cref{cor: controlling number of points t-cyclic} and \Cref{lem: iid outside diagonal t-cyclic} conditionally given $U_\Delta$, $\check\pts\cap B_\Delta$ is a family of i.i.d.~uniform points in $B_\Delta$ whose size is distributed like a sum of three $\Binomial{\lfloor\check n/3\rfloor \pm1}{U_\Delta(1-U_\Delta)}$ variables.
    The same holds for $B_\Delta^*$.
    \begin{itemize}
        \item If $n=\O\left(t_1^{(n)}\right)$ then, by \Cref{eq: limit law U Delta}, $\card{\pts\cap B_\Delta}$ and $\card{\pts\cap B_\Delta^*}$ are both $\O_\P(1)$ as $n\to\infty$.
        Thus by \Cref{eq: low records Delta}, $\lrec{\pts}$ is also a $\O_\P(1)$.
        \item If $t_1^{(n)}=o(n)$ then, by \Cref{eq: limit law U Delta}, 
        $\frac{t_1^{(n)}}{n}\card{\pts\cap B_\Delta} \cv{n}{\infty} \Exp{1}$
        in distribution.
        Therefore:
        \begin{equation*}
            \frac{\log\card{\pts\cap B_\Delta}}{\log\left(n/t_1^{(n)}\right)}
            \cv{n}{\infty} 1
        \end{equation*}
        in probability.
        In particular $\card{\pts\cap B_\Delta}\to\infty$ in probability and we can apply \Cref{th: records uniform} to deduce:
        \begin{equation*}
            \frac{\lrec{\pts\cap B_\Delta}}{\log\card{\pts\cap B_\Delta}} \cv{n}{\infty} 1
        \end{equation*}
        in probability.
        The same holds for $B_\Delta^*$, and the previous two equations along with \Cref{eq: low records Delta} yield:
        \begin{equation*}
            \frac{\lrec{\pts}}{\log\left(n/t_1^{(n)}\right)}
            \cv{n}{\infty} 2
        \end{equation*}
        as desired.
    \end{itemize}
    This concludes the proof of \Cref{th: low records t-cyclic}.
\end{proof}

\begin{proof}[Proof of \Cref{prop: low records fluctuations}]
    We use the same notation as in the proof of \Cref{th: low records t-cyclic}.
    Recall that under $E_n$:
    \begin{equation}\label{eq: approx low records few fixed}
        \card{ \lrec{\pts} - \lrec{\check\pts\cap B_n} - \lrec{\check\pts\cap B_n^*} } \le \O_\P(1)
    \end{equation}
    where by \Cref{lem: iid outside diagonal t-cyclic}, \Cref{lem: downmost and leftmost points in bands} and \Cref{th: records uniform}:
    \begin{equation}\label{eq: low records CLT each zone}
        \frac{ \lrec{\check\pts\cap B_n} - \frac12 \log n }{\sqrt{\frac12 \log n}}
        \cv{n}{\infty} \Normal{0}{1}
    \end{equation}
    in distribution, and likewise for $B_n^*$.
    It remains to understand how $\lrec{\pts\cap B_n}$ and $\lrec{\pts\cap B_n^*}$ are correlated.

    \paragraph{Products of $2$-cycles:}
    Suppose that $t_1^{(n)}+2t_2^{(n)}=n$, i.e.~that $\tau_n$ is a random involution with $t_1^{(n)}$ fixed points.
    Then the sets $\check\pts\cap B_n$ and $\check\pts\cap B_n^*$ are a.s.~symmetric with respect to the diagonal.
    Therefore, $\perm{\check\pts\cap B_n}$ and $\perm{\check\pts\cap B_n^*}$ are a.s.~inverse of each other, and $\lrec{\perm{\check\pts\cap B_n}} = \lrec{\perm{\check\pts\cap B_n^*}}$ almost surely.
    Using \Cref{eq: proba empty square low records,eq: approx low records few fixed,eq: low records CLT each zone} this yields:
    \begin{equation*}
        \frac{\lrec{\pts}-\log n}{\sqrt{\frac12\log n}}
        \cv{n}{\infty} \Normal{0}{4}
    \end{equation*}
    as announced.
    
    \paragraph{Products of $3$-cycles:}
    Suppose that $t_1^{(n)}+3t_3^{(n)}=n$, i.e.~that $\tau_n$ is a random product of $3$-cycles.
    We claim the following: conditionally on the event $E_n$ and given the family $\pts\cap B_n$, the permutation $\perm{\pts\cap B_n^*}$ has size $\card{\pts\cap B_n}$ and is uniformly random.
    Let us explain why.

    Set $\epsilon_n := \epsilon/\sqrt{\check n}$ and $D_n := \carre\setminus\left( C_n\cup B_n\cup B_n^*\right)$.
    Consider three independent variables $U,V,W$ distributed under $\Unif{[0,1]}$.
    We may a.s.~distinguish four outcomes:
    \begin{itemize}
        \item $U,V,W$ are all less than $\epsilon_n$, i.e.~$(U,V),(V,W),(W,U)$ all lie within $C_n$;
        \item $U,V,W$ are all greater than $\epsilon_n$, i.e.~$(U,V),(V,W),(W,U)$ all lie within $D_n$;
        \item at least one point, say $(U,V)$, is in $B_n$.
        Then either $W<\epsilon_n$, in which case $(V,W)\in B_n^*$ and $(W,U)\in C_n$, or $W>\epsilon_n$, in which case $(V,W)\in D_n$ and $(W,U)\in B_n^*$.
    \end{itemize}
    This holds for each triple $U_{3,k}^1,U_{3,k}^2,U_{3,k}^3$ in the construction of $\pts$.
    Under the event $E_n$, only two possibilities remain: either $Z_{3,k}^1,Z_{3,k}^2,Z_{3,k}^3\in D_n$, or exactly one point is in $B_n$ and another one is in $B_n^*$, and they share a coordinate.
    The conditional law of $\pts\cap B_n^*$ under $E_n$ and $\pts\cap B_n$ can thus be described as follows:
    write $\pts\cap B_n = \{(U_i,V_i)\}$; let $(W_i)$ be i.i.d.~$\Unif{[\epsilon_n,1]}$ variables; and set $\pts\cap B_n^* = \{(W_i,U_i)\}$.
    One can easily see that for any distinct $u_1,\dots,u_r$, if $W_1,\dots,W_r$ are i.i.d.~$\Unif{[\epsilon_n,1]}$ variables then $\perm{(W_1,u_1),\dots,(W_r,u_r)}$ is uniformly random (e.g.~switching $W_i$ with $W_j$ shows that this is the Haar measure on the symmetric group).
    This proves that conditionally on the event $E_n$ and given the family $\pts\cap B_n$, the permutation $\perm{\pts\cap B_n^*}$ has size $\card{\pts\cap B_n}$ and is uniformly random.

    This claim implies that conditionally on the event $E_n$ and given the number $\card{\pts\cap B_n}$, the random permutations $\perm{\pts\cap B_n}$ and $\perm{\pts\cap B_n^*}$ have size $\card{\pts\cap B_n}$ and are independent uniform.
    Using \Cref{eq: proba empty square low records,eq: approx low records few fixed,eq: low records CLT each zone} we deduce:
    \begin{equation*}
        \frac{\lrec{\pts}-\log n}{\sqrt{\frac12\log n}}
        \cv{n}{\infty} \Normal{0}{2},
    \end{equation*}
    and this concludes the proof of the proposition.
\end{proof}

\section{Proofs of the results on pattern counts}
\label{sec: proof pattern}

\subsection{Dependency graphs and asymptotic normality}\label{sec: dependency graph}

We recall the notion of dependency graph, as introduced in \cite{PL82}.
Let $(Z_i)_{i\in V}$ be a family of random variables indexed by some set $V$.
We say that a graph $(V,E)$ is a {\em dependency graph} for $(Z_i)_{i\in V}$ if it has the following property:
for any disjoint subsets $V_1,V_2$ of $V$ which are not linked by any edge in $E$, the two families $(Z_i)_{i\in V_1}$ and $(Z_i)_{i\in V_2}$ are independent.

Let $t$ be a cycle type of size $n$, and $\L_t$ be the graph defined in \Cref{sec: notation}.
The following lemma is a direct consequence of the definitions.

\begin{lem}\label{lem: dependency graph}
    In the geometric construction of \Cref{lem: geometric construction t-cyclic}, the graph $\L_t=(\Vert_t,\Edges_t)$ is a dependency graph for the variables $(Z_i)_{i\in\Vert_t}$.
\end{lem}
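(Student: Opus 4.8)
The plan is to deduce the lemma from the mutual independence of the i.i.d.\ variables $\left(U_i\right)_{i\in\Vert_t}$ used in the construction, together with one short combinatorial observation about the successor operator $\s$. First I would record that, by the very definition of the geometric construction, $Z_i=\left(U_i,U_{\s(i)}\right)$ for every $i\in\Vert_t$; in particular $Z_i$ is a measurable function of the single pair $\left(U_i,U_{\s(i)}\right)$. Consequently, for any subset $V_1\subseteq\Vert_t$, the family $\left(Z_i\right)_{i\in V_1}$ is measurable with respect to $\sigma\!\left(U_m:m\in V_1\cup\s(V_1)\right)$, where $\s(V_1):=\left\{\s(i):i\in V_1\right\}$, and similarly $\left(Z_j\right)_{j\in V_2}$ is measurable with respect to $\sigma\!\left(U_m:m\in V_2\cup\s(V_2)\right)$.

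Next I would invoke the independence of the $U_m$'s: the two $\sigma$-algebras above are independent --- and hence so are $\left(Z_i\right)_{i\in V_1}$ and $\left(Z_j\right)_{j\in V_2}$ --- as soon as
\begin{equation*}
    \left(V_1\cup\s(V_1)\right)\cap\left(V_2\cup\s(V_2)\right)=\emptyset .
\end{equation*}
So the whole proof reduces to checking this set disjointness under the two hypotheses that $V_1\cap V_2=\emptyset$ and that no edge of $\L_t$ joins $V_1$ to $V_2$. I would argue by contradiction: if some $m$ lies in both sides, then $m\in V_1$ or $m=\s(i_1)$ for some $i_1\in V_1$, and symmetrically $m\in V_2$ or $m=\s(i_2)$ for some $i_2\in V_2$. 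A short case analysis --- using that $\s$ is injective, being a cyclic shift on each cycle $\pts_{p,k}$ --- shows that each case forces either an element of $V_1\cap V_2$ or an edge $\{m,i\}$ of $\L_t$ joining $V_1$ and $V_2$, a contradiction.

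The only point requiring a little care is the treatment of fixed points, i.e.\ indices $i$ with $\s(i)=i$: these contribute a self-loop that is \emph{removed} from $\L_t$ by convention, yet for such $i$ one has $Z_i=(U_i,U_i)$, which still depends on $U_i$ alone, so the reduction above is unaffected and the case analysis still closes (whenever a putative ``edge'' arising in the argument is in fact a self-loop, it forces $m\in V_1\cap V_2$ instead). Beyond this the argument is purely formal; I do not anticipate any genuine obstacle, and the lemma is indeed ``a direct consequence of the definitions'', the only content being the bookkeeping of which $U_m$'s each $Z_i$ depends on.
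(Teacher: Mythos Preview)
Your proposal is correct and is exactly the unpacking of the definitions that the paper alludes to (the paper gives no written proof, merely stating that the lemma ``is a direct consequence of the definitions''). Your case analysis is complete, and the handling of self-loops is appropriate.
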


Now we can use $\L_t$ to construct another dependency graph.
Let $\binom{\Vert_t}{r}$ denote the set of subsets of size $r$ in $\Vert_t$.
Define a graph $\binom{\L_t}{r}$ with vertices $\binom{\Vert_t}{r}$ as follows:
$I,J\in\binom{\Vert_t}{r}$ share an edge if and only if there exist $i\in I$ and $j\in J$ such that $i=j$ or $i\sim j$ in $\L_t$.
Using \Cref{lem: dependency graph}, it is straightforward to check that $\binom{\L_t}{r}$ is a dependency graph for the family of random variables $\left(\big. (Z_i)_{i\in I} \right)_{I\in\binom{\Vert_t}{r}}$.

The idea that the variables $(Z_i)_{i\in\Vert_t}$ are ``partly dependent'' is motivated by the fact that these graphs have low degrees:
the degrees of $\L_t$ are bounded by $2$, and a rough upper bound for the degrees of $\binom{\L_t}{r}$ is $3r\binom{n}{r-1} = \O\left( n^{r-1} \right)$.
We could then apply \cite[Theorem~2]{J88} to deduce asymptotic normality for the pattern count $X_\pi(\tau_n)$, but we rather use Stein's method to obtain a stronger result.

\begin{proof}[Proof of \Cref{th: pattern Stein}]
    We aim to apply \cite[Theorem~3.5]{H18}, which is based on the results of \cite{CR10} and \cite{R11}, to the decomposition \eqref{eq: write X as sum of partly dependent} of $X_\pi(\tau_n)$.
    According to \Cref{lem: dependency graph} and the discussion following it, $\binom{\L_t}{r}$ is a dependency graph for the family of random variables $\left( \One{ \perm{Z_i,i\in I} = \pi} \right)_{I\in\binom{\Vert_t}{r}}$.
    Thus we can use \cite[Theorem~3.5]{H18} with $B=1$, $D\le 3r\binom{n}{r-1}$, and $N=\binom{n}{r}$.
    This yields the announced bound on the Kolmogorov distance.

    Now suppose $\var{X_\pi(\tau_n)}=\sigma^2 n^{2r-1} + o(n^{2r-1})$ as $n\to\infty$, for some $\sigma\ge0$.
    If $\sigma=0$ then
    \begin{equation*}
        \frac{X_\pi(\tau_n)-\mean{X_\pi(\tau_n)}}{n^{r-1/2}} \cv{n}{\infty} 0
    \end{equation*}
    in probability, by Bienaymé--Chebychev's inequality.
    If $\sigma>0$ then the previous bound on the Kolmogorov distance is a $\O(n^{-1/2})$.
    In particular it goes to $0$ as $n\to\infty$, which implies convergence in distribution of the random variables.
    This concludes the proof.
\end{proof}

\subsection{Computation of the variance}

\begin{proof}[Proof of \Cref{th: pattern normality t-cyclic with fixed points}]
    We only prove marginal convergence, for convenience.
    Joint convergence is proved by the same method, using the Cram\'er--Wold theorem.

    Fix a pattern $\pi\in\S_r$.
    Throughout the proof we may omit the indices $\pi,p_1,p_2,n$.
    Thanks to \Cref{th: pattern Stein}, it suffices to compute the mean and variance of $X:=X_\pi(\tau)$.
    Let $\pts=(Z_i)_{i\in\Vert_t}$ be a geometric construction of $\tau_n$.
    The first idea is to approximate $X$ with a new variable
    \begin{equation*}
        Y := \sum_{I\in\binom{\Vert_t}{r}\text{ indep.}}\One{\perm{Z_i,i\in I}=\pi} ,
    \end{equation*}
    where the sum ranges over all subsets $I\in\binom{\Vert_t}{r}$ which induce an independent (i.e.~empty) subgraph of $\L_t$.
    Since the number of non-independent $r$-sized subgraphs of $\L_t$ is bounded by $2(r-1)\binom{n}{r-1}$, almost surely it holds that $\card{X - Y} \le 2(r-1)\binom{n}{r-1}$.
    In particular $\mean{X} = \mean{Y} + \O\left(n^{r-1}\right)$ and $\sqrt{\var{X}} = \sqrt{\var{Y}} + \O\left(n^{r-1}\right)$, so it suffices to prove the desired result for $Y$.
    To do this, we shall rewrite $Y$ while keeping track of how many fixed points are picked in the sum.

    Decompose $\Vert_t$ as $\Vert_1\sqcup \Vert_2\sqcup \Vert_3$ where $\Vert_m := \{(p,k,l)\in\Vert_t : p=m\}$ for $m\in\{1,2\}$ correspond to fixed points and points in $2$-cycles, and $\Vert_3 := \{(p,k,l)\in\Vert_t : p\ge3\}$ corresponds to all other points.
    Let $U_1, \dots, U_r, V_1, \dots, V_r$ be i.i.d.~$\Unif{[0,1]}$ random variables.
    For any $q\in \{0,1,\dots,r\}$, define:
    \begin{equation*}
    \mu_{q}:=\prob{\perm{\big. (U_1,U_1), \dots, (U_q,U_q), (U_{q+1},V_{q+1}), \dots, (U_r,V_r)}=\pi} ,
    \end{equation*}
    so that 
    \begin{equation*}
        \mu := \mu_\pi^{p_1} = \sum_{q=0}^r 
        \binom{r}{q}p_1^q(1-p_1)^{r-q}
        \mu_{q} .
    \end{equation*}
    Define $\mathcal{I}_q := \left\{ I\in\binom{\Vert_t}{r} \text{ indep., }\card{I\cap \Vert_1}=q \right\}$ and $c_q:=\card{\mathcal{I}_q}$.
    For any $I\in\mathcal{I}_q$, the variables $Z_i,i\in I$ are independent by \Cref{lem: dependency graph}, and $\prob{\perm{Z_i,i\in I}=\pi}=\mu_q$.
    Therefore, we can write $\E Y = \sum_{q=0}^r c_q \mu_{q}$.
    Since the number of non-independent $I\in\binom{\Vert_t}{r}$ is $\O(n^{r-1})$ and the total number of $I\in\binom{\Vert_t}{r}$ with $|I\cap \Vert_1|=q$ is 
    \begin{equation*}
        \binom{|\Vert_1|}{q}\binom{|\Vert_t\setminus \Vert_1|}{r-q} = \frac{(np_1)^q+o(n^{q\m 1/2})}{q!}\frac{(n(1\mm p_1))^{r\m q}+o(n^{r\m q\m 1/2})}{(r\m q)!} = \binom{n}{r}\binom{r}{q}p_1^q(1\mm p_1)^{r\m q} + o(n^{r\m 1/2}) ,
    \end{equation*}
    we deduce $\E Y = \binom{n}{r}\mu + o(n^{r-1/2})$.
    It remains to study the variance of $Y$, and for this we rely on the method of $U$-statistics developed in \cite{H48}.
    The idea is to decompose $Y$ using its univariate projections.
    First define:
    \begin{equation*}
        \bar Y := Y-\E Y = \sum_{q=0}^r\sum_{I\in\mathcal{I}_q}\bar\phi_q(Z_i,i\in I) 
        \quad\text{where}\quad
        \bar\phi_q(z_1,\dots,z_r):=\One{\perm{z_1,\dots,z_r}=\pi}-\mu_{q} .
    \end{equation*}
    Then set for any $q\in \{0,1,\dots,r\m1\}$ and $z\in\carre$:
    \begin{equation*}
        \varphi_q(z) := \prob{\perm{\big. (U_1,U_1), \dots, (U_q,U_q), (U_{q+1},V_{q+1}), \dots, (U_{r-1},V_{r-1}), z} = \pi} ,
    \end{equation*}
    so that for any $z\in\carre$:
    \begin{equation}\label{eq: decomposition psi with fixed points}
        \psi(z) := \psi_\pi^{p_1}(z) = \sum_{q=0}^{r-1} 
        \binom{r-1}{q}p_1^q(1-p_1)^{r-q-1}
        \varphi_q(z) .
    \end{equation}
    It follows from definition that for any $I\in\mathcal{I}_q$ we have:
    \begin{align}\label{eq: conditioning fixed point or not}
        \mean{\bar\phi_q(Z_i,i\in I) \;\big| Z_j} =
        \left\{
        \begin{array}{ll}
            \varphi_q(Z_j) - \mu_q & \text{ if }j\in I\setminus\Vert_1 ;\\
            \varphi_{q-1}(Z_j) - \mu_q & \text{ if }j\in I\cap\Vert_1 .
        \end{array}
        \right.
    \end{align}
    Now for any $I\in\mathcal{I}_q$ and $(z_i)_{i\in I}\in\left(\carre\right)^I$, define the ``residual function'':
    \begin{equation*}
        \phi_I^*(z_i,i\in I)
        := \bar\phi_q(z_i,i\in I) 
        - \sum_{i\in I\cap \Vert_1} \left(\varphi_{q\m1}(z_i)-\mu_{q}\right)
        - \sum_{i\in I\setminus \Vert_1} \left(\varphi_q(z_i)-\mu_{q}\right) .
    \end{equation*}
    Notice that if the sum over $i\in I\cap \Vert_1$ is non-empty then $q\ge1$ i.e.~$q\m1\ge0$.
    A key property of $\phi_I^*$ is that its univariate projections vanish:
    for any $j\in I$, a.s.~$\mean{\phi_I^*(Z_i,i\in I) \;\big| Z_j}=0$.
    This is checked directly by distinguishing between $j\in\Vert_1$ and $j\notin\Vert_1$ to use \eqref{eq: conditioning fixed point or not}, observing that most terms have zero mean and that two of them cancel out.
    With this notation, we can decompose $\bar Y$ as:
    \begin{equation*}
        \bar Y 
        = \sum_{I\in\binom{\Vert_t}{r}\text{ indep.}} \phi_I^*(Z_i,i\in I)
        + \sum_{q=0}^r\sum_{I\in\mathcal{I}_q}\left(\sum_{i\in I\cap \Vert_1}\varphi_{q\m1}(Z_i) + \sum_{i\in I\setminus \Vert_1}\varphi_q(Z_i) - r\mu_{q}\right)
        = Y_* + \hat Y.
    \end{equation*}
    When expanding $Y_*^2$, one can distinguish the pairs $(I,J)$ of $r$-sized independent subgraphs of $\L_t$ into three categories: the ones sharing no common vertex and connected by no edge; the ones sharing exactly one vertex or connected by exactly one edge; and the others.
    If $(I,J)$ is in the first category then $\mean{\phi_I^*(Z_i,i\in I)\phi_J^*(Z_j,j\in J)}=0$ by independence (see \Cref{lem: dependency graph}).
    If $(I,J)$ is in the second category, write $(\alpha,\beta)\in I\times J$ for the unique pair of connected or equal vertices.
    Then $I\setminus\{\alpha\}$ is independent of $J\setminus\{\beta\}$ and we can write a.s.:
    \begin{equation*}
        \mean{\left.\big. \phi_I^*(Z_i,i\in I)\phi_J^*(Z_j,j\in J) \;\right| Z_\alpha,Z_\beta}
        = \mean{\left.\big. \phi_I^*(Z_i,i\in I) \;\right| Z_\alpha} \mean{\left.\big. \phi_J^*(Z_j,j\in J) \;\right| Z_\beta} = 0 .
    \end{equation*}
    In particular $\mean{\phi_I^*(Z_i,i\in I)\phi_J^*(Z_j,j\in J)}=0$.
    Since the number of pairs in the third category is bounded by $9r^2\binom{n}{r}\binom{n}{r-2}$, we deduce $\mean{Y_*^2} = \O\left(n^{2r-2}\right)$ as $n\to\infty$.
    Thus $\sqrt{\V\bar Y} = \sqrt{\V\hat Y} + \O\left(n^{r-1}\right)$, and it suffices to study $\hat Y$.
    Since $\sum_{q=0}^r c_q\mu_q = \binom{n}{r}\mu + o\left(n^{r-1/2}\right)$, we can rewrite it as:
    \begin{equation*}
        \hat Y = \sum_{i\in \Vert_1}\sum_{q=1}^r c_{i,q}\varphi_{q-1}(Z_i)
        + \sum_{i\in \Vert_t\setminus \Vert_1}\sum_{q=0}^{r-1} c_{i,q}\varphi_{q}(Z_i)
        - r\binom{n}{r}\mu + o\left(n^{r-1/2}\right) ,
    \end{equation*}
    where the $o$ is deterministic, and for any $q\in\{0,1,\dots,r\}$ and $i\in \Vert_t$, $c_{i,q}$ is the number of $I\in\mathcal{I}_q$ containing $i$.
    These numbers satisfy, as $n\to\infty$:
    \begin{equation*}
        \left\{
        \begin{array}{ll}
        c_{i,q} = \binom{|\Vert_1|-1}{q-1}\binom{|\Vert_t\setminus \Vert_1|}{r-q} + \O(n^{r-2}) = \binom{n}{r-1}\binom{r-1}{q-1}p_1^{q-1}(1\m p_1)^{r-q} + o(n^{r-3/2})
        \text{ uniformly in }i\in \Vert_1\vspace{.5em} ;\\
        c_{i,q} = \binom{|\Vert_1|}{q}\binom{|\Vert_t\setminus \Vert_1|-1}{r-q-1} + \O(n^{r-2}) = \binom{n}{r-1}\binom{r-1}{q}p_1^{q}(1\m p_1)^{r-q-1} + o(n^{r-3/2})
        \text{ uniformly in }i\in \Vert_t\setminus \Vert_1 .
        \end{array}
        \right.
    \end{equation*}
    Then define
    \begin{equation*}
        \widetilde Y := \binom{n}{r\m1}\sum_{i\in \Vert_t} \left(\big. \psi(Z_i) - \mu\right) .
    \end{equation*}
    The previous calculation and \eqref{eq: decomposition psi with fixed points} imply that $\card{\hat Y - \widetilde Y} = o(n^{r-1/2})$ a.s.~for some deterministic $o$, hence as before we just need to compute the variance of $\widetilde Y$.
    Using \Cref{lem: dependency graph} we can write:
    \begin{align*}
        \var{\binom{n}{r\m 1}^{-1}\widetilde Y}
        &= \sum_{i,j\in \Vert_t} \cov{\psi(Z_i)}{\psi(Z_j)}
        \\&= \sum_{i\in \Vert_t} \var{\psi(Z_i)}
        + \sum_{i\in \Vert_2} \cov{\psi(Z_i)}{\psi(Z_{\s(i)})}
        \\&\quad+ \sum_{i\in \Vert_3} \left( \cov{\psi(Z_i)}{\psi(Z_{\s(i)})} + \cov{\psi(Z_i)}{\psi(Z_{\s^{-1}(i)})} \right)
        \\&= n p_1\var{\psi(U,U)}
        + n(1\m p_1)\var{\psi(U,V)}
        + n p_2\cov{\psi(U,V)}{\psi(V,U)}
        \\&\quad+ 2n(1\m p_1\m p_2)\cov{\psi(U,V)}{\psi(V,W)} + o(n) 
    \end{align*}
    where $U,V,W$ are i.i.d.~$\Unif{[0,1]}$ variables.
    Thus $\var{X}=\sigma^2 n^{2r-1} + o(n^{2r-1})$ with $\sigma^2 := \Sigma_{\pi,\pi}^{p_1,p_2}$ as defined in the statement of \Cref{th: pattern normality t-cyclic with fixed points}.
    Thanks to \Cref{th: pattern Stein}, this concludes the proof.
\end{proof}

\begin{proof}[Proof of \Cref{cor: pattern normality conjugation-invariant}]
    Recall that conditionally given $t^{(n)}$, $\tau_n$ is a uniform $t^{(n)}$-cyclic permutation.
    Using \Cref{th: pattern normality t-cyclic with fixed points}, we can thus write for any $x\in\R$:
    \begin{align*}
        \prob{ X_\pi(\tau_n) - \binom{n}{r}\mu_\pi^{p_1} < n^{r-1/2}x }
        = \mean{ \prob{\left. X_\pi(\tau_n) - \binom{n}{r}\mu_\pi^{p_1} < n^{r-1/2}x \;\right| \left(t^{(n)}\right)_n }}
        \cv{n}{\infty} g_\pi^{p_1,p_2}(x)
    \end{align*}
    by dominated convergence theorem, where $g_\pi^{p_1,p_2}$ is the distribution function of $\Normal{0}{\Sigma_{\pi,\pi}^{p_1,p_2}}$.
    Joint convergence is proved by the same method and the Cram\'er--Wold theorem.
\end{proof}

\subsection{Non-degeneracy for involutive patterns}

\begin{lem}\label{lem: regularity psi with fixed points}
    For any $p_1\in[0,1]$ and $\pi\in\S_r$, the function $\psi_\pi^{p_1}$ is $(r\m1)$-Lipschitz with respect to the $L^1$-norm on $\carre$.
    Moreover, its restriction to $\{0\}\times[0,1]$ is a polynomial which is non-constant if $p_1<1$ and $r\ge 2$..
\end{lem}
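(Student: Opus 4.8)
The plan is to handle the Lipschitz estimate by a direct union bound, and the two statements about the restriction to $\{0\}\times[0,1]$ via the decomposition $\psi_\pi^{p_1}=\sum_{q=0}^{r-1}\binom{r-1}{q}p_1^q(1\m p_1)^{r-1-q}\varphi_q$ that already appears in the proof of \Cref{th: pattern normality t-cyclic with fixed points}, where $\varphi_q(z):=\prob{\perm{(U_1,U_1),\dots,(U_q,U_q),(U_{q+1},V_{q+1}),\dots,(U_{r-1},V_{r-1}),z}=\pi}$ for i.i.d.\ $\Unif{[0,1]}$ variables. For the Lipschitz bound, I would estimate $\card{\psi_\pi^{p_1}(z)-\psi_\pi^{p_1}(z')}$ by the probability that replacing the last point $z=(x,y)$ by $z'=(x',y')$ changes the induced pattern. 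Since $\perm{\hat Z_1,\dots,\hat Z_{r-1},z}$ depends only on the relative orders of the $r$ x-coordinates and of the $r$ y-coordinates, the pattern is unchanged unless some $\hat Z_i$ has its x-coordinate strictly between $x$ and $x'$ or its y-coordinate strictly between $y$ and $y'$; as both marginals of $p_1\Leb_\Delta+(1\m p_1)\Leb_{\carre}$ are $\Unif{[0,1]}$, each such event has probability at most $\card{x-x'}+\card{y-y'}$, and a union bound over $i\in[r\m1]$ gives $\card{\psi_\pi^{p_1}(z)-\psi_\pi^{p_1}(z')}\le(r\m1)\big(\card{x-x'}+\card{y-y'}\big)$.

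For polynomiality it suffices to show each $y\mapsto\varphi_q(0,y)$ is a polynomial on $[0,1]$. Almost surely $(0,y)$ is the leftmost of the $r$ points, so the induced pattern depends on $y$ only through the signs of the differences $(\text{y-coordinate of point }i)-y$, $i\in[r\m1]$. Hence $\varphi_q(0,y)$ is the integral over a cube of a $\{0,1\}$-valued function that is constant on the cells cut out by fixed coordinate-comparison hyperplanes together with the hyperplanes $\{(\text{y-coordinate of point }i)=y\}$. I would make this a polynomial by rescaling the coordinates forced below $y$ by a factor $y$ and those forced above $y$ by a factor $1\m y$, which turns the integral into $y^a(1\m y)^b$ times the volume of a polytope whose facets no longer depend on $y$ for $y\in(0,1)$; so $\varphi_q(0,\cdot)$ is a polynomial on $(0,1)$, extending to $[0,1]$ by the continuity obtained above.

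For non-constancy, assume $p_1<1$, $r\ge2$, and suppose $\psi_\pi^{p_1}(0,\cdot)\equiv c$. Keeping only the $q=0$ term, $c=\psi_\pi^{p_1}(0,\tfrac12)\ge(1\m p_1)^{r-1}\varphi_0(0,\tfrac12)$, and $\varphi_0(0,\tfrac12)>0$: there is a positive-volume family of configurations of i.i.d.\ uniform points with $(0,\tfrac12)$ first that induce $\pi$ (arrange the $r\m1$ x-coordinates in the order dictated by $\pi$ with its first entry deleted, and place exactly $\pi(1)\m1$ of the $r\m1$ y-coordinates below $\tfrac12$, in the dictated relative order, the rest above). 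Thus $c>0$. On the other hand $(0,0)$ is a.s.\ the bottom-left point, so $\varphi_q(0,0)=0$ for every $q$ unless $\pi(1)=1$, and $(0,1)$ is a.s.\ the top-left point, so $\varphi_q(0,1)=0$ for every $q$ unless $\pi(1)=r$; since $r\ge2$ gives $1\ne r$, at least one of $\psi_\pi^{p_1}(0,0)$, $\psi_\pi^{p_1}(0,1)$ vanishes, contradicting $c>0$. Hence $\psi_\pi^{p_1}(0,\cdot)$ is a non-constant polynomial.

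The main obstacle is the polynomiality step: the picture (an integral of a piecewise-constant function with pieces translated by $y$, whose partition has stable combinatorial type on $(0,1)$ and degenerates only at the endpoints) is clear, but turning the rescaling idea into a rigorous argument requires careful tracking of how the $y$-dependent comparisons interact with the fixed coordinate comparisons and with the faces of the cube. Given that, the Lipschitz bound and the non-constancy are both short.
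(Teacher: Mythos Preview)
Your Lipschitz argument is correct and coincides with the paper's. Your non-constancy argument --- evaluating $\psi_\pi^{p_1}(0,\cdot)$ at $0$, $\tfrac12$, and $1$ and observing that at least one endpoint value vanishes while the midpoint value is positive --- is also correct and is in fact cleaner than the paper's route, which derives an explicit polynomial formula for $\psi_\pi^{p_1}(0,v)$ and then reasons about its lowest-degree monomial.

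There is, however, a genuine gap in your polynomiality step when $q\ge 1$. You condition on the signs of $(\text{y-coordinate of }\hat Z_i)-y$ and then rescale those y-coordinates by $y$ or $1\m y$. For a diagonal point $\hat Z_i=(U_i,U_i)$ with $i\le q$, this rescales $U_i$; but $U_i$ is simultaneously the x-coordinate of that point, and it is compared (via the ``fixed'' x-order hyperplanes) with the x-coordinates $U_j$, $j>q$, of the non-diagonal points. Those $U_j$ are not y-coordinates, are never compared with $y$, and hence are not rescaled in your scheme. After your rescaling, a constraint such as $U_i<U_j$ becomes $y\,\tilde U_i<U_j$, so the resulting region still has $y$-dependent facets and the conclusion ``volume of a polytope whose facets no longer depend on $y$'' fails. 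In short, your rescaling handles $\varphi_0$ correctly but not $\varphi_q$ for $q\ge 1$.

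The paper closes exactly this gap by conditioning one level further: in addition to the signs of $(\text{y-coordinate})-v$, it also conditions on the signs of $(\text{x-coordinate of each non-diagonal }\hat Z_i)-v$, i.e.\ on the full quadrant of each non-diagonal point relative to $(v,v)$, together with the position of each diagonal point relative to $v$. After this finer conditioning every coordinate entering any comparison lies in either $[0,v]$ or $[v,1]$, all of them can be rescaled simultaneously, and a coupling shows the conditional probability of $\{\text{pattern}=\pi\}$ is independent of $v$; the probability of the conditioning event is an explicit monomial in $v$ and $1\m v$. Your outline becomes correct once you add this extra conditioning on the non-diagonal x-coordinates.
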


\begin{proof}
    Write $\psi:=\psi_\pi^{p_1}$. Recall the variables $\hat Z_1,\dots,\hat Z_r$ used to define $\psi$.
    For any $z=(u,v),z'=(u',v')\in\carre$:
    \begin{equation*}
        \card{\psi(z)-\psi(z')}
        \le \prob{\perm{\hat Z_1,\dots,\hat Z_{r-1},z} \ne \perm{\hat Z_1,\dots,\hat Z_{r-1},z'}} .
    \end{equation*}
    Define $\mathcal{B}_{z,z'}$ as the set of points in $\carre$ having x-coordinate between $u$ and $u'$ or y-coordinate between $v$ and $v'$.
    For the permutations $\perm{\hat Z_1,\dots,\hat Z_{r-1},z}$ and $\perm{\hat Z_1,\dots,\hat Z_{r-1},z'}$ to differ, there needs to be a $\hat Z_i$ whose position with respect to $z$ (NW, NE, SW, or SE) is not the same as with respect to $z'$.
    Therefore:
    \begin{equation*}
        \card{\psi(z)-\psi(z')}
        \le \sum_{i=1}^{r-1}\prob{\hat Z_i \in \mathcal{B}_{z,z'}}
        \le (r\m1)\left(|u-u'|+|v-v'|\right)
    \end{equation*}
    and $\psi$ is indeed $(r\m1)$-Lipschitz.
    Now fix $v\in[0,1]$ and let $Q$ be the number of $\hat Z_i's$, $i\le r\m1$ on the diagonal $\Delta$.
    The random variable $Q$ follows a $\Binomial{r\m1}{p_1}$ law.    
    Now let $q$, $j_{\rm SW}$, $j_{\rm SE}$, $j_{\rm NE}$, $j_{\rm NW}$, $k_{\rm SW}$, $k_{\rm NE}$ be nonnegative integers satisfying
    \begin{equation}\label{eq: phi - decoupage points si diago}
        \left\{
        \begin{array}{lllll}
        q \in\{0,1,\dots,r-1\} ;\\
        j_{\rm SW}+j_{\rm SE}+j_{\rm NE}+j_{\rm NW}+k_{\rm SW}+k_{\rm NE} = r-1 ;\\
        k_{\rm SW}+k_{\rm NE} = q ;\\
        j_{\rm SW}+j_{\rm SE}+k_{\rm SW} = \pi(1)-1 ;\\
        j_{\rm NE}+j_{\rm NW}+k_{\rm NE} = r-\pi(1) ;
        \end{array}
        \right.
    \end{equation}
    (the last line is redundant, but we keep it for clarity).
    Under this condition we write $A_{q,\bf j,k}(v)$ for the event that $Q=q$; $j_{\rm SW},j_{\rm SE},j_{\rm NE},j_{\rm NW}$ are the numbers of non-diagonal $\hat Z_i$'s in each quadrant defined by $(v,v)$; and $k_{\rm SW}$, $k_{\rm NE}$ are the numbers of diagonal $\hat Z_i$'s lying southwest, resp.~northeast of $(v,v)$.
    If $\perm{\hat Z_1,\dots,\hat Z_{r-1},(0,v)}=\pi$ then $A_{q, \bf j,k}(v)$ is verified for some $q,\bf j,k$ satisfying \eqref{eq: phi - decoupage points si diago}.
    Moreover, conditionally on $A_{q,\bf j,k}(v)$, the probability of this event does not depend on $v$.
    Indeed one can construct a straightforward coupling between $(\hat Z_i)_{1\le i\le r-1}$ conditioned on $A_{q,\bf j,k}(v)$ and $(\hat Z_i')_{1\le i\le r-1}$ conditioned on $A_{q,\bf j,k}(v')$ such that a.s.~$\perm{\hat Z_1,\dots,\hat Z_{r-1},(0,v)}=\perm{\hat Z_1',\dots,\hat Z_{r-1}',(0,v')}$.
    Therefore:
    \begin{align*}
        \psi(0,v) 
        &= \sum_{{q,\bf j,k}\text{ s.t. }\eqref{eq: phi - decoupage points si diago}}
        \prob{\left.\perm{\hat Z_1,\dots,\hat Z_{r-1},(0,v)}=\pi \;\right| A_{q,\bf j,k}(v)}
        \prob{A_{q,\bf j,k}(v)}
        \\&= \sum_{{q,\bf j,k}\text{ s.t. }\eqref{eq: phi - decoupage points si diago}}
        c_{\pi, q,\bf j,k} \binom{r\m1}{q} p_1^q (1\mm p_1)^{r\m q\m 1} 
        \binom{r\m q\m 1}{j_{\rm SW},j_{\rm SE},j_{\rm NE},j_{\rm NW}}
        v^{k_{\rm SW}\p 2j_{\rm SW}\p j_{\rm SE}\p j_{\rm NW}} 
        (1\mm v)^{k_{\rm NE}\p 2j_{\rm NE}\p j_{\rm SE}\p j_{\rm NW}}
        \\&= v^{\pi(1)\m 1}(1\mm v)^{r\m \pi(1)} 
        \sum_{{q,\bf j,k}\text{ s.t. }\eqref{eq: phi - decoupage points si diago}}
        c_{\pi, q,\bf j,k} \binom{r\m1}{q} p_1^q (1\mm p_1)^{r\m q\m 1}
        \binom{r\m q\m 1}{j_{\rm SW},j_{\rm SE},j_{\rm NE},j_{\rm NW}}
        v^{j_{\rm SW}\p j_{\rm NW}} (1\mm v)^{j_{\rm NE}\p j_{\rm SE}}
        \\&= v^{\pi(1)-1}(1-v)^{r-\pi(1)} \sum_{q=0}^{r-1} \sum_{m=0}^{r-q-1}
        c_{\pi, q, m}'\, v^m (1-v)^{r-q-1-m}
    \end{align*}
    for some non-negative constants $c_{\pi, q,\bf j,k}$ and $c_{\pi, q, m}'$.
    Thus $\psi(0,v)$ is a polynomial in $v$.
    It remains to see that it is non-constant if $p_1<1$ and $r\ge2$.
    
    First suppose that $\pi(1)\ne1$.
    The polynomial $\psi(0,v)$ contains a monomial $v^{\pi(1)-1} \sum_{q=0}^{r-1} c_{\pi,q,0}'$ where $c_{\pi,q,0}'\ge0$, along with terms of higher degrees.
    Since $p_1<1$, the event $A_{0,\bf j,0}(v)$ has positive probability for any $(0,\bf j,0)$ satisfying \eqref{eq: phi - decoupage points si diago}, and also $c_{\pi,0,\bf j,0} >0$.
    In particular for $j_{\rm SW}+j_{\rm NW} = 0$, we deduce $c_{\pi,0,0}'>0$.
    Thus $\psi(0,v)$ is indeed non-constant. 
    
    On the other hand if $\pi(1)=1$ then $\pi(1)\ne r$ and we can do the same reasoning with the polynomial $\psi(0,1-v)$ to conclude.
\end{proof}

\begin{proof}[Proof of \Cref{prop: non degenerate involution pattern}]
    By the law of total covariance, if $U,V,W$ are i.i.d.~$\Unif{[0,1]}$ variables:
    \begin{equation*}
        \cov{\psi(U,V)}{\psi(V,W)} = 
        \mean{\cov{\psi(U,V)}{\psi(V,W)\;\big|V}}
        + \cov{\mean{\psi(U,V)\;\big| V}}{\mean{\psi(V,W)\;\big| V}} .
    \end{equation*}
    Conditionally given $V$, the variables $\psi(U,V)$ and $\psi(V,W)$ are independent, and thus the first term is null.
    Now let us study the second term.
    Since $\pi$ is an involution, for any $u_1,v_1,\dots,u_r,v_r$ we have $\perm{(u_1,v_1),\dots,(u_r,v_r)}=\pi$ if and only if $\perm{(v_1,u_1),\dots,(v_r,u_r)}=\pi$.
    Since the distribution $p_1\Leb_\Delta + (1-p_1)\Leb_{\carre}$ is symmetric with respect to $\Delta$, we deduce that $\psi(u,v)=\psi(v,u)$ for any $(u,v)\in\carre$.
    Hence:
    \begin{equation*}
        \cov{\psi(U,V)}{\psi(V,W)}
        = \cov{\mean{\psi(U,V)\;\big| V}}{\mean{\psi(W,V)\;\big| V}}
        = \var{\mean{\psi(U,V)\;\big| V}}
        \ge 0 ,
    \end{equation*}
    and in particular $(r-1)!^2\, \Sigma_{\pi,\pi}^{p_1,p_2}\ge (1\m p_1\p p_2)\var{\psi(U,V)}$.
    By \Cref{lem: regularity psi with fixed points}, $\psi$ is continuous and non-constant on $\carre$, and thus $\var{\psi(U,V)}>0$.
    As $p_1<1$, this concludes the proof.
\end{proof}

\subsection{Cycle types with few fixed points}

In this section we suppose $p_1=0$, and drop the index $p_1$.
In particular $\mu_\pi = 1/r!$ for any $\pi\in\S_r$,
\begin{equation*}
    \psi_\pi(z) = \prob{\perm{\hat Z_1,\dots,\hat Z_{r-1},z}=\pi}
\end{equation*}
where $\hat Z_1,\dots,\hat Z_{r-1}$ are i.i.d.~$\Unif{\carre}$ variables, and the matrix of \Cref{th: pattern normality t-cyclic with fixed points} rewrites as:
\begin{equation*}
    (r\m1)!^2 \Sigma_{\pi,\rho}^{p_2} = \cov{\psi_\pi(U,V)}{\psi_\rho(U,V)} + p_2\cov{\psi_\pi(U,V)}{\psi_\rho(V,U)}
    + 2(1\m p_2)\cov{\psi_\pi(U,V)}{\psi_\rho(V,W)}
\end{equation*}
where $U,V,W$ are i.i.d.~$\Unif{[0,1]}$ variables.
To establish \Cref{th: pattern normality t-cyclic w/o fixed points}, we need to study the function $\psi$ a bit more.

\begin{lem}\label{lem: pattern null cov t-cyclic}
    If $p_1=0$ then for any $\pi,\rho\in\S_r$:
    \begin{equation*}
        \cov{\psi_\pi(U,V)}{\psi_\rho(V,W)} = 0.
    \end{equation*}
\end{lem}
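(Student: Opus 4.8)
Here is the plan. I would use the law of total covariance, conditioning on the middle variable $V$. Write $U,V,W$ for the i.i.d.\ $\Unif{[0,1]}$ variables of the statement. Conditionally on $V$, the variables $U$ and $W$ remain independent, so $\psi_\pi(U,V)$ and $\psi_\rho(V,W)$ are conditionally independent and $\cov{\psi_\pi(U,V)}{\psi_\rho(V,W)\mid V}=0$. Hence
\begin{equation*}
    \cov{\psi_\pi(U,V)}{\psi_\rho(V,W)} = \cov{\mean{\psi_\pi(U,V)\mid V}}{\mean{\psi_\rho(V,W)\mid V}} ,
\end{equation*}
and it suffices to prove that $V \mapsto \mean{\psi_\pi(U,V)\mid V}$ is a.s.\ constant.

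To this end I would compute $\mean{\psi_\pi(U,v)}$ for a fixed $v\in[0,1]$, with $U\sim\Unif{[0,1]}$. Recalling that for $p_1=0$ we have $\psi_\pi(u,v)=\prob{\perm{\hat Z_1,\dots,\hat Z_{r-1},(u,v)}=\pi}$ with $\hat Z_1,\dots,\hat Z_{r-1}$ i.i.d.\ $\Unif{\carre}$, averaging over $U$ gives
\begin{equation*}
    \mean{\psi_\pi(U,v)} = \prob{\perm{\hat Z_1,\dots,\hat Z_{r-1},(U,v)}=\pi}
\end{equation*}
where $U$ is now an extra independent $\Unif{[0,1]}$ variable. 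Writing $\hat Z_i=(X_i,Y_i)$, the key observation is that the list of x-coordinates $(X_1,\dots,X_{r-1},U)$ consists of i.i.d.\ continuous variables and is independent of the list of y-coordinates $(Y_1,\dots,Y_{r-1},v)$ (within each $\hat Z_i$ the two coordinates are independent, and $U$ is drawn independently of everything). Therefore the permutation recording the left-to-right order of the $r$ points is uniform on $\S_r$ and independent of the permutation recording their bottom-to-top order; since $\perm{\hat Z_1,\dots,\hat Z_{r-1},(U,v)}$ is obtained by composing these two permutations (one of them inverted), it is itself uniform on $\S_r$. Thus $\mean{\psi_\pi(U,v)}=1/r!$ for every $v$, so $\mean{\psi_\pi(U,V)\mid V}=1/r!$ a.s.\ is constant, and the displayed covariance identity yields $\cov{\psi_\pi(U,V)}{\psi_\rho(V,W)}=0$.

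I do not expect a serious obstacle: the only point that needs care is the independence between the left-to-right and bottom-to-top orderings of the $r$ points, which is exactly where the hypothesis that each $\hat Z_i$ has independent coordinates (and that the extra x-coordinate is drawn uniformly and independently) is used. By symmetry of the conditioning argument one could instead randomize the y-coordinate of the last point and show $\mean{\psi_\rho(v,W)}=1/r!$ for each fixed $v$; either route closes the proof.
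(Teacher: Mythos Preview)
Your proof is correct and follows essentially the same approach as the paper: both condition on $V$, use conditional independence of $\psi_\pi(U,V)$ and $\psi_\rho(V,W)$ given $V$, and then show that $\mean{\psi_\pi(U,V)\mid V}=1/r!$ a.s.\ by decomposing the induced permutation as $\sigma_2^{-1}\circ\sigma_1$ where $\sigma_1$ (the x-ordering) is uniform and independent of the y-coordinates. The only cosmetic difference is that you phrase the reduction via the law of total covariance, whereas the paper computes $\mean{\psi_\pi(U,V)\psi_\rho(V,W)\mid V}$ directly.
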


\begin{proof}
    For any distinct $u_1,v_1,\dots,u_r,v_r\in[0,1]$, we can write $\perm{(u_1,v_1),\dots,(u_r,v_r)}=\sigma_2^{-1}\circ\sigma_1$
    where $\sigma_1,\sigma_2\in\S_r$ are characterized by $u_{\sigma_1(1)}<\dots<u_{\sigma_1(r)}$ and $v_{\sigma_2(1)}<\dots<v_{\sigma_2(r)}$.
    If $(U_1,V_1),\dots,(U_r,V_r)$ are i.i.d.~$\Unif{\carre}$ variables then $\sigma_1,\sigma_2$ are i.i.d.~uniform permutations of size $r$.
    Conditionally given $(V_1,\dots,V_r)$, $\sigma_2$ is determined and $\sigma_1$ is still uniformly random in $\S_r$.
    Subsequently 
    \begin{equation*}
        \prob{\perm{(U_1,V_1),\dots,(U_r,V_r)}=\pi \;\big| V_1,\dots,V_r} 
        = \prob{\sigma_1 = \sigma_2\circ\pi \;\big| \sigma_2}
        = 1/r!
    \end{equation*}
    almost surely, and similarly
    \begin{equation*}
        \prob{\perm{(U_1,V_1),\dots,(U_r,V_r)}=\rho \;\big| U_1,\dots,U_r} 
        = \prob{\sigma_2 = \sigma_1\circ\rho^{-1} \;\big| \sigma_1}
        = 1/r!
    \end{equation*}
    almost surely. 
    We can thus write $\mean{\psi_\pi(U,V)\psi_\rho(V,W) \;\big| V} = \mean{\psi_\pi(U,V)\,\big| V}\mean{\psi_\rho(V,W) \,\big| V} = (1/r!)^2$ a.s., and the lemma readily follows.
\end{proof}

This yields the announced formula for $\Sigma_{\pi,\rho}^{p_2}$.
It remains to compute the rank and show non-degeneracy.
For this we shall give an explicit polynomial formula for $\psi_\pi$; it can already be found in \cite{JNZ15} (see Equation (5) therein for its rescaled expression) but we provide a proof below for completeness.

\begin{lem}\label{lem: study varphi}
    Let $\pi\in\S_r$.
    We can write, for any $(u,v)\in\carre$:
    \begin{equation*}
        \psi_\pi(u,v) = \frac{1}{(r-1)!}\sum_{i=1}^r g_i(u) g_{\pi(i)}(v)
    \end{equation*}
    where for any $j\in[r]$ and $w\in\R$, $g_j(w) := \binom{r-1}{j-1}w^{j-1}(1-w)^{r-j}$.
    Moreover, if $r\ge2$ then the symmetrized function $(u,v)\mapsto\psi^{\rm sym}_\pi(u,v):=\psi_\pi(u,v)+\psi_\pi(v,u)$ is not constant.
\end{lem}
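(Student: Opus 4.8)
The plan is to first derive the closed form for $\psi_\pi$, and then read off non-constancy of the symmetrization by evaluating it at the corners of $\carre$.

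For the formula, I would view $\perm{\hat Z_1,\dots,\hat Z_{r-1},(u,v)}$ as the result of inserting the deterministic point $(u,v)$ into the random configuration of the $r-1$ i.i.d.\ uniform points. Write $\hat Z_j=(\hat U_j,\hat V_j)$, set $\sigma:=\perm{\hat Z_1,\dots,\hat Z_{r-1}}\in\S_{r-1}$, and let $C:=1+\card{\{j:\hat U_j<u\}}$ and $R:=1+\card{\{j:\hat V_j<v\}}$ be the column and row into which $(u,v)$ gets inserted. The argument rests on three facts: (i) inserting $(u,v)$ at column $C$ and row $R$ produces $\pi$ exactly when $R=\pi(C)$ and $\sigma$ equals the pattern obtained from $\pi$ by deleting column $C$ and row $\pi(C)$ (and standardizing); (ii) $C$ and $R$ are independent, with $\prob{C=i}=\binom{r-1}{i-1}u^{i-1}(1-u)^{r-i}=g_i(u)$ and $\prob{R=j}=g_j(v)$; (iii) $\sigma$ is uniform on $\S_{r-1}$, hence $\prob{\sigma=\rho}=1/(r-1)!$ for all $\rho$, and moreover $\sigma$ is independent of $(C,R)$. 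Granting these, conditioning on $\{C=i\}$ gives $\prob{\perm{\hat Z_1,\dots,\hat Z_{r-1},(u,v)}=\pi,\ C=i}=g_i(u)\,g_{\pi(i)}(v)\cdot\tfrac1{(r-1)!}$, and summing over $i$ yields $\psi_\pi(u,v)=\frac1{(r-1)!}\sum_{i=1}^r g_i(u)g_{\pi(i)}(v)$. The step to be careful about is (iii): one must check that the relative order of $\hat U_1,\dots,\hat U_{r-1}$ is independent of the count $\card{\{j:\hat U_j<u\}}$ (a standard consequence of exchangeability --- conditionally on that count being $k$, the relative order is still uniform on $\S_{r-1}$), and likewise for the $\hat V_j$; combining this with the independence of the $\hat U$'s from the $\hat V$'s gives the joint independence of $\sigma$, $C$ and $R$ used above.

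For the second assertion, I would use $g_j(0)=\One{j=1}$ and $g_j(1)=\One{j=r}$, which turn the formula into $\psi_\pi(0,0)=\One{\pi(1)=1}/(r-1)!$, $\psi_\pi(1,1)=\One{\pi(r)=r}/(r-1)!$, $\psi_\pi(0,1)=\One{\pi(1)=r}/(r-1)!$ and $\psi_\pi(1,0)=\One{\pi(r)=1}/(r-1)!$. Since $p_1=0$, we have $\int_\carre\psi_\pi\,d\Leb=\mu_\pi=1/r!>0$, so $\psi^{\rm sym}_\pi$ is nonnegative with integral $2/r!>0$ over $\carre$; in particular a nonzero constant. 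If $\pi(1)\neq1$ then $\psi^{\rm sym}_\pi(0,0)=2\psi_\pi(0,0)=0$, and a constant function with integral $2/r!>0$ cannot vanish, so $\psi^{\rm sym}_\pi$ is non-constant; if $\pi(r)\neq r$ the same reasoning applies at $(1,1)$. In the only remaining case, $\pi(1)=1$ and $\pi(r)=r$, we get $\psi^{\rm sym}_\pi(0,0)=2/(r-1)!>0$ while $\psi^{\rm sym}_\pi(0,1)=\psi_\pi(0,1)+\psi_\pi(1,0)=(\One{\pi(1)=r}+\One{\pi(r)=1})/(r-1)!=0$, because $r\ge2$ forces $1\neq r$. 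Thus in every case $\psi^{\rm sym}_\pi$ takes at least two distinct values.

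The main obstacle is really the independence statement (iii) behind the formula --- once the relative orders are cleanly decoupled from the threshold counts, everything else is bookkeeping and a handful of corner evaluations. As an alternative to the corner argument for non-constancy, one could write $\psi_\pi(u,v)=\frac1{(r-1)!}\,\mathbf g(u)^\top P_\pi\,\mathbf g(v)$ with $\mathbf g=(g_1,\dots,g_r)^\top$ and $P_\pi$ the permutation matrix of $\pi$, note that $\psi^{\rm sym}_\pi$ corresponds to $P_\pi+P_\pi^\top$, and use the linear independence of the Bernstein polynomials $g_1,\dots,g_r$ together with $\sum_j g_j\equiv1$ to deduce that $\psi^{\rm sym}_\pi$ is constant only if $P_\pi+P_\pi^\top$ is a constant multiple of the all-ones matrix --- which is impossible for $r\ge2$, since its row sums equal $2$ whereas its diagonal entries lie in $\{0,2\}$.
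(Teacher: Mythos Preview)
Your proof is correct. For the formula, you follow essentially the same decomposition as the paper --- conditioning on the column position $i$ at which $(u,v)$ is inserted --- but you identify the constant $1/(r-1)!$ more directly by recognising that $\sigma=\perm{\hat Z_1,\dots,\hat Z_{r-1}}$ is uniform on $\S_{r-1}$ and independent of $(C,R)$; the paper instead leaves the constant unspecified and recovers it at the end via an expectation computation with Beta integrals. Your justification of the independence (order statistics independent of ranks, applied to the $\hat U$'s and $\hat V$'s separately, then combined) is the right way to fill in the step you flagged.

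For non-constancy, your route differs from the paper's. The paper expands $\psi_\pi^{\rm sym}(u,v)$ in the basis $u^{i-1}(1-u)^{r-i}$, restricts to $u=0$ and examines the lowest-degree term in $v$ to force $\pi(1)=1$, then restricts to $u=1$ to force $\pi(r)=1$ or $\pi^{-1}(r)=1$, yielding $r=1$. Your corner-evaluation argument is more elementary: it bypasses the polynomial degree analysis entirely and reduces to comparing a few indicator values against the positive integral $2/r!$. The alternative matrix argument you sketch --- reducing constancy to $P_\pi+P_\pi^\top=cJ$ via linear independence of the Bernstein polynomials and $\sum_j g_j\equiv 1$, then ruling this out by comparing row sums ($=2$) with diagonal entries ($\in\{0,2\}$) --- is also valid, and isolates the obstruction cleanly.
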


\begin{proof}
    We omit the index $\pi$ in this proof.
    Write $\psi = \psi_1+\dots+\psi_r$ where for any $i\in[r]$ and $(u,v)\in\carre$, if $\hat Z_1,\dots,\hat Z_{r-1}$ denote i.i.d.~$\Unif{\carre}$ random variables:
    \begin{equation*}
        \psi_i(u,v) := \P\left( \perm{\hat Z_1,\dots,\hat Z_{r-1}, (u,v)}=\pi \;,\; (u,v) \text{ is $i$-th from the left} \right).
    \end{equation*}
    For this event, say $A_i(u,v)$, to be realized, there needs to be exactly $i\m1$ points left of $(u,v)$ and $\pi(i)\m1$ below $(u,v)$.
    More precisely there needs to be exactly $j_\mathrm{SW},j_\mathrm{SE},j_\mathrm{NE},j_\mathrm{NW}$ points in each quadrant defined by $(u,v)$, where these quantities depend only on $\pi$ and $i$.
    Let $B_i(u,v)$ be this last event; its probability can be written as $\binom{r-1}{j_\mathrm{SW} ,\, j_\mathrm{SE} ,\, j_\mathrm{NE} ,\, j_\mathrm{NW}} u^{i-1} (1-u)^{r-i} v^{\pi(i)-1} (1-v)^{r-\pi(i)}$.
    Then conditionally on $B_i(u,v)$, the probability of $A_i(u,v)$ does not depend on $(u,v)$ anymore.
    Indeed by a coupling argument, one can e.g.~see that $\prob{A_i(u,v)\big|B_i(u,v)}$ is the probability that $j_\mathrm{SW}$ uniform points in $[0,1/2]^2$, $j_\mathrm{SE}$ uniform points in $[1/2,1]\!\times\![0,1/2]$, $j_\mathrm{NE}$ uniform points in $[1/2,1]^2$, and $j_\mathrm{NW}$ uniform points in $[0,1/2]\!\times\![1/2,1]$, all independent, form the permutation induced by $\pi$ on $[r]\setminus\{i\}$.
    Hence $\prob{A_i(u,v)\big|B_i(u,v)}$ equals a positive constant, independent of $(u,v)$, and we deduce that
    \begin{equation*}
        \psi_i(u,v)
        = \prob{A_i(u,v)\big|B_i(u,v)} \prob{B_i(u,v)}
        = c_i u^{i-1} (1-u)^{r-i} v^{\pi(i)-1} (1-v)^{r-\pi(i)} .
    \end{equation*}
    To find the value of $c_i$, first notice that $\mean{\psi_i(U,V)} = \frac{1}{r}\frac{1}{r!}$.
    Then in the previous formula:
    \begin{equation*}
        \frac{1}{r}\frac{1}{r!}
        = c_{i} B\big(i , r\m i\p 1\big) B\big(\pi(i) , r\m \pi(i)\p 1\big)
    \end{equation*} 
    where $B$ denotes the Beta function, and thus
    \begin{equation*}
        c_{i} = \frac{(r-1)!}{(i\m 1)!(r\m i)!(\pi(i)\m 1)!(r\m \pi(i))!} 
        = \frac{1}{(r-1)!}\binom{r\m1}{i\m1}\binom{r\m1}{\pi(i)\m1} .
    \end{equation*}
    This yields the announced formula for $\psi = \sum_i \psi_i$.
    Now let us turn to the last claim of the lemma.
    By reindexing we can write $\psi^{\rm sym}(u,v) = \sum_{i=1}^r u^{i-1} (1-u)^{r-i} Q_i(v)$ where 
    $$ Q_i(v) := c_i v^{\pi(i)-1} (1-v)^{r-\pi(i)} + c_{\pi^{-1}(i)} v^{\pi^{-1}(i)-1} (1-v)^{r-\pi^{-1}(i)} .$$
    Suppose that $\psi^{\rm sym}$ is constant on $\carre$.
    Then by setting $u=0$ we see that the polynomial $Q_1(v)$ is constant in $v$.
    If $\pi(1)>1$ then $\pi^{-1}(1)>1$ and the lowest degree term of $Q_1$ has positive degree and coefficient, which contradicts the fact that $Q_1$ is constant.
    Hence $\pi(1)=1$.
    Likewise by setting $u=1$ we see that $Q_{r}$ is constant, which implies $\pi(r)=1$ or $\pi^{-1}(r)=1$.
    Hence $r=1$ and this concludes the proof.
\end{proof}

\begin{proof}[Proof of \Cref{th: pattern normality t-cyclic w/o fixed points}]
    First, let us show non-degeneracy.
    Fix $p_2\in[0,1]$, $r\ge2$, $\pi\in\S_r$.
    Since $\psi_\pi$ is continuous and not constant on $\carre$ according to \Cref{lem: study varphi}, it holds that $\var{\psi_\pi(U,V)}>0$.
    
    By Cauchy--Schwarz inequality, $\card{\cov{\psi_\pi(U,V)}{\psi_\pi(V,U)}} \le \var{\psi_\pi(U,V)}$.
    Suppose by contradiction that $\Sigma_{\pi,\pi}^{p_2}=0$. 
    Then necessarily $p_2=1$ and $\cov{\psi_\pi(U,V)}{\psi_\pi(V,U)} = -\var{\psi_\pi(U,V)}$.
    This equality case can only happen if $\psi_\pi(U,V)$ and $\psi_\pi(V,U)$ are linearly dependent, and here we would have $\psi_\pi(U,V)=-\psi_\pi(V,U)$ almost surely.
    However it follows from \Cref{lem: study varphi} that $\psi_\pi(U,V)+\psi_\pi(V,U)$ is not a.s.~constant, and this proves by contradiction that $\Sigma_{\pi,\pi}^{p_2}>0$.
    
    Now, Let us study the rank of $\Sigma^{p_2}$.
    Fix $\alpha,\beta \ge 0$ such that $\alpha^2+\beta^2=1$ and $2\alpha\beta=p_2$ (this is possible since $p_2\le 1$), and define for any $\pi\in\S_r$ and $(u,v)\in\carre$: 
    \begin{equation*}
        \chi_\pi(u,v) := (r-1)!\left(\alpha\bar\psi_\pi(u,v) + \beta\bar\psi_\pi(v,u)\right)
    \end{equation*}
    where $\bar\psi_\pi := \psi_\pi - \frac{1}{r!}$.
    Then:
    \begin{align*}
        \frac{1}{(r-1)!^2}\cov{\chi_\pi(U,V)}{\chi_\rho(U,V)}
        &= \cov{\alpha\psi_\pi(U,V) + \beta\psi_\pi(V,U)}{\alpha\psi_\rho(U,V) + \beta\psi_\rho(V,U)}
        \\&= (\alpha^2+\beta^2)\cov{\psi_\pi(U,V)}{\psi_\rho(U,V)}
        + 2\alpha\beta\cov{\psi_\pi(U,V)}{\psi_\rho(V,U)}
        \\&= (r-1)!^2\Sigma^{p_2}_{\pi,\rho} .
    \end{align*}
    Hence, up to a multiplicative constant, $\Sigma^{p_2}$ is the Gram matrix associated to the family $(\chi_\pi)_{\pi\in\S_r}$ in $L^2(\carre)$.
    Therefore, the rank of $\Sigma^{p_2}$ equals the rank of this family.
    Recall the notation of \Cref{lem: study varphi}.
    For any $(u,v)$ we have:
    \begin{equation*}
        1 = \sum_{\pi\in\S_r} \psi_\pi(u,v)
        = \sum_{\pi\in\S_r}\frac{1}{(r-1)!}\sum_{i=1}^r g_i(u) g_{\pi(i)}(v)
        = \sum_{1\le i,j\le r} g_i(u) g_j(v) .
    \end{equation*}
    Thus as in \cite{JNZ15} we can write:
    \begin{equation*}
        \bar\psi_\pi(u,v)
        = \frac{-1}{r!} + \frac{1}{(r-1)!}\sum_{i=1}^r g_i(u) g_{\pi(i)}(v)
        = \frac{1}{(r-1)!}\sum_{1\le i,j\le r} \left(\delta_{j,\pi(i)}-\frac1r\right)g_i(u)g_j(v)
    \end{equation*}
    where $\delta$ is the Kronecker symbol.
    Define $A^\pi$ as the matrix $\left(\delta_{j,\pi(i)}-\frac1r\right)_{1\le i,j\le r}$.
    Then 
    \begin{equation*}
        \chi_\pi(u,v)
        = \sum_{1\le i,j\le r}\left( \alpha A^\pi_{i,j} + \beta A^{(\pi^{-1})}_{i,j} \right)g_i(u)g_j(v) .
    \end{equation*}
    Since the family $\left(\big.(u,v)\mapsto g_i(u)g_j(v)\right)_{1\le i,j\le r}$ is linearly independent in $L^2(\carre)$, the rank of $(\chi_\pi)_{\pi\in\S_r}$ is equal to the rank of the family $\left(\alpha A^\pi + \beta A^{(\pi^{-1})}\right)_{\pi\in\S_r}$ in $\mathcal{M}_r(\R)$.
    As explained in \cite{JNZ15} (see the proof of Theorem~2.5 therein), the family $(A^\pi)_{\pi\in\S_r}$ spans the space of matrices with rows summing to 0 and columns summing to 0, and thus has rank $(r-1)^2$.

    First suppose $p_2<1$, which corresponds to $\alpha\ne\beta$.
    This implies that each matrix in $(A^\pi)_{\pi\in\S_r}$ is a linear combination of matrices in $\left(\alpha A^\pi + \beta A^{(\pi^{-1})}\right)_{\pi\in\S_r}$ and vice versa, hence the latter family also has rank $(r-1)^2$.

    Now suppose $p_2=1$ i.e.~$\alpha=\beta$.
    It is easily checked that the family $\left(A^\pi + A^{(\pi^{-1})}\right)_{\pi\in\S_r}$ spans the space of {\em symmetric} matrices with rows summing to 0 and columns summing to 0.
    Such a matrix $A$ is exactly characterized by its coefficients $A_{i,j}$ with $i\le j<r$, hence this space has dimension $r(r-1)/2$.
    This concludes the proof.
\end{proof}

\bibliographystyle{plainurl}
\bibliography{bibli}

\end{document}